\def\subsection{\@startsection{subsection}{2}%
  \z@{.5\linespacing\@plus.7\linespacing}
{.5\baselineskip}%
  {\normalfont\centering\scshape}%
}
\newcommand{\nc}{\newcommand}
\nc{\les}{\lesssim}
\nc{\nit}{\noindent}
\nc{\nn}{\nonumber}
\nc{\D}{\partial}
\nc{\diff}[2]{\frac{d #1}{d #2}}
\nc{\diffn}[3]{\frac{d^{#3} #1}{d {#2}^{#3}}}
\nc{\pdiff}[2]{\frac{\partial #1}{\partial #2}}
\nc{\pdiffn}[3]{\frac{\partial^{#3} #1}{\partial{#2}^{#3}}}
\nc{\abs}[1] {\lvert #1 \rvert}
\nc{\cAc}{{\cal A}_c}
\nc{\cE}{{\cal E}}
\nc{\cF}{{\cal F}}
\nc{\cP}{{\cal P}}
\nc{\cV}{{\cal V}}
\nc{\cQ}{{\cal Q}}
\nc{\cGin}{{\cal G}_{\rm in}}
\nc{\cGout}{{\cal G}_{\rm out}}
\nc{\cO}{{\cal O}}
\nc{\Lav}{{\cal L}_{\rm av}}
\nc{\cL}{{\cal L}}
\nc{\cB}{{\cal B}}
\nc{\cZ}{{\cal Z}}
\nc{\cR}{{\cal R}}
\nc{\cT}{{\cal T}}
\nc{\cY}{{\cal Y}}
\nc{\cX}{{\cal X}}
\nc{\cXT}{{{\cal X}(T)}}
\nc{\cBT}{{{\cal B}(T)}}
\nc{\vD}{{\vec \mathcal{D}}}
\nc{\efield}{\mathcal{E}}
\nc{\vE}{{\vec \efield}}
\nc{\vB}{{\vec \mathcal{B}}}
\nc{\vH}{{\vec \mathcal{H}}}
\nc{\mR}{\mathcal R}
\nc{\mF}{\mathcal F}
\nc{\mE}{\mathcal E}
\nc{\ty}{{\tilde y}}
\nc{\tu}{{\tilde u}}
\nc{\tV}{{\tilde V}}
\nc{\Pc}{{\bf P_c}}
\nc{\bx}{{\bf x}}
\nc{\bX}{{\bf X}}
\nc{\bXYZ}{{\bf XYZ}}
\nc{\bY}{{\bf Y}}
\nc{\bF}{{\bf F}}
\nc{\bS}{{\bf S}}
\nc{\dV}{{\delta V}}
\nc{\dE}{{\delta E}}
\nc{\TT}{{\Theta}}
\nc{\dPsi}{{\delta\Psi}}
\nc{\order}{{\cal O}}
\nc{\Rout}{R_{\rm out}}
\nc{\eplus}{e_+}
\nc{\eminus}{e_-}
\nc{\epm}{e_\pm}
\nc{\sgn}{\text{sgn}}
\nc{\eps}{\varepsilon}
\nc{\vnabla}{{\vec\nabla}}
\nc{\G}{\Gamma}
\nc{\w}{\omega}
\nc{\mh}{h}
\nc{\mg}{g}
\nc{\vphi}{\varphi}
\nc{\tlambda}{\tilde\lambda}
\nc{\be}{\begin{equation}}
\nc{\ee}{\end{equation}}
\nc{\ba}{\begin{eqnarray}}
\nc{\ea}{\end{eqnarray}}
\nc{\g}{\gamma}
\nc{\ol}{\overline}
\newtheorem{theorem}{Theorem}[section]
\newtheorem{lemma}[theorem]{Lemma}
\newtheorem{prop}[theorem]{Proposition}
\newtheorem{corollary}[theorem]{Corollary}
\newtheorem{defin}[theorem]{Definition}
\newtheorem{asmp}[theorem]{Assumption}
\nc{\pT}{\partial_T}
\nc{\pz}{\partial_z}
\nc{\pt}{\partial_t}
\nc{\la}{\langle}
\nc{\ra}{\rangle}
\nc{\infint}{\int_{-\infty}^{\infty}}
\nc{\halfwidth}{6.5cm}
\nc{\figwidth}{10cm}
\newcommand{\f}{\frac}
\nc{\nlayers}{L} \nc{\nsectors}{M}
\nc{\indicator}{\mathbf{1}}
\nc{\Rhole}{R_{\rm hole}}
\nc{\Rring}{R_{\rm ring}}
\nc{\neff}{n_{\rm eff}}
\nc{\Frem}{F_{\rm rem}}
\nc{\R}{\mathbb R}
\nc{\mJ}{\mathcal J}
\nc{\C}{\mathbb C}
\nc{\Z}{\mathbb Z}
\nc{\N}{\mathbb N}
\nc{\DD}{\Delta}
\nc{\cD}{\mathcal D}
\nc{\logorm}{\left\|}
\nc{\rnorm}{\right\|}
\nc{\rnormp}{\right\|_{\ell^{p,\eps}}}
\nc{\rar}{\rightarrow}
\begin{document}
	\DeclarePairedDelimiter{\gen}{\langle}{\rangle} 
	\DeclarePairedDelimiter\bra{\langle}{\rvert}
    \DeclarePairedDelimiter\ket{\lvert}{\rangle}
	
	\begin{abstract}
		We consider the higher order Schr\"odinger operator $H=(-\Delta)^m+V(x)$ in $n$ dimensions with real-valued potential $V$ when $n>2m$, $m\in \mathbb N$ when $H$ has a threshold eigenvalue.    We adapt our recent results for $m\geq 1$ when $n>4m$ to lower dimensions $2m<n\leq 4m$  to show that when $H$ has a threshold eigenvalue and no resonances,  the wave operators are bounded on $L^p(\R^n)$ for the natural range $1\leq p<\frac{2n}{n-1}$ when $n$ is odd and $1\leq p<\frac{2n}{n-2}$ when $n$ is even.  We further show that if the zero energy eigenfunctions are orthogonal to $x^\alpha V(x)$ for all $|\alpha|<k_0$, then the wave operators are bounded on $1\leq p<\frac{n}{2m-k_0}$ when $k_0<2m$ in all dimensions $n>2m$.  The range is $p\in [1,\infty)$ and $p\in[1,\infty]$ when $k_0=2m$ and $k_0>2m$ respectively.	The proofs apply in the classical $m=1$ case as well and streamlines existing arguments in the eigenvalue only case, in particular the $L^\infty(\R^n)$ boundedness is new when $n>3$.
	\end{abstract}
	\title[Wave operators for higher order  Schr\"odinger operators]{\textit{$L^p$ boundedness of wave operators for higher order schr\"odinger operators with threshold eigenvalues} } 
	
	\author[M.~B. Erdo\smash{\u{g}}an, W.~R. Green, K. LaMaster]{M. Burak Erdo\smash{\u{g}}an, William~R. Green, Kevin LaMaster}
	\thanks{ The first and third authors were partially supported by the NSF grant  DMS-2154031. The second author is partially supported by Simons Foundation
Grant 511825. }
	\address{Department of Mathematics \\
		University of Illinois \\
		Urbana, IL 61801, U.S.A.}
	\email{berdogan@illinois.edu}
	\address{Department of Mathematics\\
		Rose-Hulman Institute of Technology \\
		Terre Haute, IN 47803, U.S.A.}
	\email{green@rose-hulman.edu}
	\address{Department of Mathematics \\
		University of Illinois \\
		Urbana, IL 61801, U.S.A.}
	\email{kevinl17@illinois.edu}

	\maketitle

\section{Introduction}
We continue the study of wave operators for higher order Schr\"odinger operators related to equations of the form
\begin{align*}
	i\psi_t =(-\Delta)^m\psi +V\psi, \qquad x\in \R^n, \quad  m\in \mathbb N.
\end{align*}
Here $V$ is a real-valued potential with polynomial decay, $|V(x)|\les \la x\ra^{-\beta}$ for some sufficiently large $\beta>0$, and some smoothness conditions, see \cite{EGWaveOp} or Assumption~\ref{asmp:Fourier} below, when $n\geq 4m-1$.  When $m=1$ this is the classical Schr\"odinger equation.  We consider the case  when $(-\Delta)^m+V$ has an eigenvalue at zero energy in dimensions $n>2m$, but no zero energy resonances nor positive eigenvalues.

Denote the free operator by $H_0=(-\Delta)^m$ and the perturbed operator by $H=(-\Delta)^m+V$.   We study the $L^p$ boundedness of the wave operators, which are defined by
$$
W_{\pm}=s\text{\ --}\lim_{t\to \pm \infty} e^{itH}e^{-itH_0}.
$$
For the class  of potentials   we consider, the wave operators exist and are asymptotically complete,  \cite{RS,ScheArb,agmon,Hor,Sche}.   Furthermore,  the intertwining identity
$$
f(H)P_{ac}(H)=W_\pm f((-\Delta)^m)W_{\pm}^*
$$
holds for these potentials where $P_{ac}(H)$ is the projection onto the absolutely continuous spectral subspace of $H$, and $f$ is any Borel function.  The intertwining identity and $L^p$ continuity of the wave operators allows one to obtain $L^p$-based mapping properties of operators of the form $f(H)P_{ac}(H)$ from those of the much simpler operators $f((-\Delta)^m)$.

As usual, we begin with the stationary representation of the wave operators
\begin{align}\label{eqn:stat rep}
	W_+u
	&=u-\frac{1}{2\pi i} \int_{0}^\infty \mR_V^+(\lambda) V [\mR_0^+(\lambda)-\mR_0^-(\lambda)] u \, d\lambda,
\end{align}
where $\mR_V(\lambda)=((-\Delta)^{m}+V-\lambda)^{-1}$, $\mR_0(\lambda)=((-\Delta)^m-\lambda)^{-1}$, and the `+' and `-' denote the usual limiting values as $\lambda$ approaches the positive real line from above and below, \cite{agmon,soffernew}.  As in previous works, \cite{EGWaveOp,EGWaveOpExt,egl}, we consider $W_+$, bounds for $W_-$ follow by conjugation since $W_-=\mathcal C W_+ \mathcal C $, where $\mathcal Cu(x)=\overline{u}(x)$.  Since the identity operator is bounded on all $L^p$ spaces, our main technical results control the contribution of the integral involving the resolvent operators.

In dimensions $n>4m$, there are no resonances at the threshold (zero energy), \cite{soffernew}.  This mirrors the case of dimensions $n>4$ in the classical ($m=1$) Schr\"odinger operator where threshold resonances cannot exist. In the classical case, the existence of threshold eigenvalues limits the upper range of $L^p(\R^n)$ boundedness of the wave operators, generically to $1\leq p<\frac{n}{2}$, \cite{YajNew,GGwaveop,YajNew2}.  With additional orthogonality conditions on the eigenspace this range can be extended to $1\leq p<n$ with the conditions $\int_{\R^n}V(x)\psi(x)=0$ for all eigenfunctions $\psi$ and similarly $1\leq p<\infty$ if additionally $\int_{\R^n}xV(x)\psi(x)=0$ \cite{GGwaveop,YajNew,YajNew2,Yaj3d,Yaj4d}.  Here we prove the analogous result for the higher order Schr\"odinger operators in Corollary~\ref{thm:full} below and capture the $L^\infty$ boundedness in the $m=1$ case, which is new outside of the case when $n=3$ considered by Yajima in \cite{Yaj3d}.

We note that the classical case and dimensions $4m\geq n>2m$, ($n=3,4$), the existence of threshold eigenvalues limits the range of $L^p(\R^n)$ boundedness to $1\leq p<n$ \cite{YajWkp1, GoGr4d2017, JenYaj4d2008} without further orthogonality assumptions \cite{Yaj3d,Yaj4d}. Our proof doesn't distinguish between $m=1$ and $m>1$, and hence applies to the classical case as well where it streamlines the existing arguments in the eigenvalue only case.  We note that the analysis in even dimensions is particularly challenging.

Our main result is to control the low energy portion of the evolution when there is a threshold eigenvalue. Let $\chi$ be  a smooth cut-off function    for a sufficiently small neighborhood of zero, and let $\widetilde \chi=1-\chi$ be the complementary cut-off away from a neighborhood of zero.   As in the papers  \cite{EGWaveOpExt, egl}, it suffices to prove that  for all sufficiently large   $\kappa\geq 0$, the operator 
$$
W_{low,\kappa}u= \frac{1}{2\pi i}  \int_{0}^\infty \chi(\lambda) (\mR_0^+(\lambda) V)^{\kappa} \mR_V^+(\lambda)  (V\mR_0^+(\lambda) )^\kappa V [\mR_0^+(\lambda)-\mR_0^-(\lambda)] u \, d\lambda,
$$
is bounded on $L^p(\R^n)$. Note that the $L^p$ boundedness of the  Born series terms and the high energy part of the remainder $W_{high,\kappa}u$ 
were established in \cite{EGWaveOp} for all $1\leq p\leq \infty$ and $n>2m$ under Assumption~\ref{asmp:Fourier} below.

Throughout the paper, we write $\la x\ra$ to denote $  (1+|x|^2)^{\f12}$, $A\les B$ to say that there exists a constant $C$ with $A\leq C B$, and write $a-:=a-\epsilon$ and $a+:=a+\epsilon$ for some $\epsilon>0$.  We further write $n_\star=n+4$ when $n$ is odd and $n_\star=n+3$ when $n$ is even.  In light of Cheng, Soffer, Wu, and Yao's recent result for low odd dimensions, \cite{CSWY25}, we first state the following result for all dimensions $2m<n\leq 4m$, both even and odd, when there is a zero energy eigenvalue but no resonances, without further orthogonality assumptions.

\begin{theorem}\label{thm:low d natural}
	Let $m\geq 1$ be fixed and $2m<n\leq 4m$.  Assume that $|V(x)|\les \la x\ra^{-\beta}$, where $V$ is a real-valued potential on $\R^n$ and $\beta>\max(8m-n,n_\star)+2 $. Let $H=(-\Delta)^m+V(x)$ have an eigenvalue at zero, but no positive eigenvalues.  Then, for all sufficiently large $\kappa$, $W_{low,\kappa}$ extends to a bounded operator on $L^p(\R^n)$ for all $1\leq p< \frac{2n}{n-1}$ when $n$ is odd and $1\leq p <\frac{2n}{n-2}$ when $n$ is even.	 
\end{theorem}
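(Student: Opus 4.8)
The plan is to analyze $W_{low,\kappa}$ by inserting the spectral decomposition of the perturbed resolvent $\mR_V^+(\lambda)$ near the threshold. Since $H$ has a zero energy eigenvalue but no resonances, the singular part of $\mR_V^+(\lambda)$ as $\lambda\to 0^+$ is governed by the Riesz projection $P_e$ onto the zero-energy eigenspace together with the nilpotent structure coming from the symmetrized resolvent identity $\mR_V^+(\lambda)=(M^+(\lambda))^{-1}$ sandwiched by free resolvents, where $M^+(\lambda)=U+v\mR_0^+(\lambda)v$ with $v=|V|^{1/2}$, $U=\sgn V$. First I would invert $M^+(\lambda)$ via the standard Jensen--Nenciu lemma: because there is an eigenvalue but no resonance, the leading singularity of $(M^+(\lambda))^{-1}$ is a term of the form $\lambda^{-\frac{n}{2m}+?}$ times a finite-rank operator built from the eigenfunctions, plus milder terms; one tracks the precise negative powers of $\lambda$ using the known expansion of $\mR_0^\pm(\lambda)$ for $2m<n\leq 4m$, which (unlike the $n>4m$ regime) already carries low-regularity/logarithmic contributions that must be carried along carefully.

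Next I would organize the $\lambda$-integral into the ``good'' piece (the absolutely continuous / Born-type remainder already handled schematically in \cite{EGWaveOp}) and the ``bad'' piece coming from the eigenvalue singularity. For the bad piece, after substituting the explicit kernels of $\mR_0^\pm(\lambda)$ — which for higher order operators decompose into a superposition of Helmholtz-type resolvents $(-\Delta - z)^{-1}$ at the $2m$ roots $z=\lambda^{1/m}\omega$ — one is left with oscillatory integrals in $\lambda$ of the form $\int_0^\infty \chi(\lambda)\lambda^{-a}e^{i\lambda^{1/2m}(|x-y_1|\pm\cdots)}\,(\textrm{smooth amplitude})\,d\lambda$ acting against the eigenfunction-weighted potential terms. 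The point of the hypothesis $n\le 4m$ together with the natural exponent range is that the eigenfunctions decay fast enough (by elliptic regularity and the decay of $V$, using $\beta>\max(8m-n,n_\star)+2$) that the resulting kernel $K(x,y)$ satisfies a bound like $|K(x,y)|\les \la x\ra^{-\frac{n-1}{2}}\la y\ra^{-N}$ in odd dimensions (resp. $\la x\ra^{-\frac{n-2}{2}}$ plus a logarithmic correction in even dimensions), which is exactly integrable in $L^{p}\to L^p$ for $p<\frac{2n}{n-1}$ (resp. $p<\frac{2n}{n-2}$) by a Schur test in $y$ and a Young/weak-$L^p$ argument in $x$. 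I would isolate the worst term — the one with the largest negative power of $\lambda$ surviving after the cancellation $\mR_0^+-\mR_0^-$ — and show its kernel decays like $\la x\ra^{-\frac{n-1}{2}}$ (odd) by a stationary phase / repeated integration by parts in $\lambda$, exploiting that the oscillatory factor has phase $\sim \lambda^{1/2m}|x|$ so each integration by parts gains $\lambda^{1/2m}|x|$ but costs a derivative hitting $\lambda^{-a}$; balancing gives the stated decay, and the even-dimensional case requires the extra care flagged in the introduction because of the branch singularity of $\mR_0$.

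The main obstacle I expect is the even-dimensional analysis: there the free resolvent kernel $\mR_0^\pm(\lambda)$ has a logarithmic branch point at $\lambda=0$, so the difference $\mR_0^+-\mR_0^-$ does not kill the singular logarithms as cleanly as in odd dimensions, and the expansion of $(M^+(\lambda))^{-1}$ picks up $\log\lambda$ factors multiplying the finite-rank eigenvalue term. Controlling $\int_0^\infty \chi(\lambda)\lambda^{-a}(\log\lambda)^j e^{i\lambda^{1/2m}|x|}\,d\lambda$ and showing it still decays like $\la x\ra^{-\frac{n-2}{2}}$ (up to logarithmic losses that are harmless for $p<\frac{2n}{n-2}$) is the delicate step; I would handle it by splitting the $\lambda$-integral at $|x|^{-2m}$, estimating the low part trivially and the high part by integration by parts, absorbing the logarithm into an $\epsilon$-loss, i.e. into the ``$a-$'' notation. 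A secondary technical point is verifying that after all the symmetrizations the operator $W_{low,\kappa}$ really does reduce to finitely many such model terms plus an error that is bounded on the full range $1\le p\le\infty$ — this is where the largeness of $\kappa$ is used, each extra factor of $\mR_0^+(\lambda)V$ providing an additional smoothing/decay that eventually renders the remainder harmless, exactly as in \cite{EGWaveOpExt,egl}.
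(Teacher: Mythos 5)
Your high-level skeleton (symmetric resolvent identity, Jensen--Nenciu inversion with a finite-rank singular part built from the eigenfunctions, large $\kappa$ to regularize the iterated resolvents, and a reduction to explicit model kernels) does match the paper, which in fact deduces this theorem as the special case $k_0=2m-\lceil\frac n2\rceil+1$ of Theorem~\ref{thm:main_low}. But the quantitative heart of your argument is wrong as stated. You claim the bad part has kernel $|K(x,y)|\les \la x\ra^{-\frac{n-1}{2}}\la y\ra^{-N}$ (odd $n$) and that this gives $L^p$ boundedness for $p<\frac{2n}{n-1}$. It does not: a kernel with rapid decay in $y$ and only $\la x\ra^{-\frac{n-1}{2}}$ decay in $x$ is bounded on $L^\infty$ and \emph{fails} on $L^1$ (one needs $\la x\ra^{-\frac{n-1}{2}}\in L^p$, i.e.\ $p>\frac{2n}{n-1}$), so your bound yields exactly the complementary range. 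The correct anisotropy is the reverse: the $x$-side must be integrable (to reach $p=1$), while the limited decay sits on the $y$-side, roughly $\la y\ra^{-\frac{n+1}{2}+}$ for odd $n$ and $\la y\ra^{-\frac n2-1+}$ for even $n$, and the upper endpoint $p<\frac{2n}{n-1}$ (resp.\ $\frac{2n}{n-2}$) is precisely the condition that this $y$-weight lie in $L^{p'}$.

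The reason you cannot produce the correct kernel bound from your ingredients is that eigenfunction decay plus stationary phase in $\lambda$ is not the mechanism; the mechanism is the vanishing moments $\la v\phi,x^\alpha\ra=0$ for $|\alpha|\leq 2m-\lceil\frac n2\rceil$, which hold automatically in the eigenvalue-only case when $2m<n\leq 4m$ (since $S_1=S_{k+1}$) and which your sketch never uses. In the paper these are exploited twice: on the incoming side via the identity in Lemma~\ref{lem:right_side_cancel} (writing $v\phi=|D|^{k_2-\varepsilon}\varphi$ under the Stone formula, which trades orthogonality for a factor $\lambda^{k_2-\varepsilon}$ and a weight $\varphi=O(\la\cdot\ra^{-n-\varepsilon})$), and on the outgoing side via the Taylor-expansion cancellation of Lemma~\ref{lem:left_side_cancel}, which upgrades the bare $|x-z_1|^{2m-n}$ decay of the free resolvent (not integrable at infinity, so fatal for $p=1$) to $|x|^{2m-n-k_1}$ plus terms where the eigenfunction decay takes over. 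Only after both gains does the oscillatory $\lambda$-integral (Lemma~\ref{lem:Lambda}, with $\eta=\alpha-1+k_2-\varepsilon+n-2m$) produce an admissible $x$-part together with the $\la y\ra^{-\eta-1}$ tail that determines the stated range; the condition $k_1+k_2+\alpha>4m-n$ in Proposition~\ref{prop:low tail eigenspace} shows that without this orthogonality input the scheme does not close in dimensions $2m<n\leq 4m$. Your even-dimensional concern about logarithms is legitimate but secondary; the missing cancellation structure is the genuine gap.
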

We note that this has a slightly less stringent assumption than \cite{CSWY25}, which requires $\beta>n+4k+5=8m-n+11$ for $n$ odd.
In our previous work, \cite{egl}, we showed that when $n>4m$ the wave operators extend to bounded operators on $L^p(\R^n)$ for all $1\leq p<\frac{n}{2m}$ without further assumptions on the eigenspace.  In dimensions $2m<n\leq 4m$, the eigenspace is naturally orthogonal to $x^{\alpha}V(x)$ for all $\alpha$ with $|\alpha|\leq 2m-\lceil\frac{n}{2}\rceil$.  We show that, similar to known results in the $m=1$ case, that if the zero energy eigenspace has additional orthogonality, one can extend the range of $p$ upward.  
Specifically, we show that
\begin{theorem}\label{thm:main_low}
	Let $n>2m\geq 2$ and $\max(1,2m-\lceil\frac{n}{2}\rceil+1) \leq k_0\leq 2m+1$. Assume that $|V(x)|\les \la x\ra^{-\beta}$, where $V$ is a real-valued potential on $\R^n$ and $ \beta>\max(8m-n,n_\star)+2$.  Assume that $H=(-\Delta)^m+V(x)$ have an eigenvalue at zero with an eigenspace orthogonal to $\{x^\alpha V(x):|\alpha|\leq k_0-1\}$ and no positive eigenvalues. Then for all sufficiently large $\kappa$,   $W_{low,\kappa}$ extends to a bounded operator on $L^p(\R^n)$ for all $1\leq p<\frac{n}{2m-k_0}$ in the case $k_0<2m$. If $k_0=2m$, then the range is $1\leq p<\infty$, and the range is $1\leq p\leq \infty$ if $k_0=2m+1$.
\end{theorem}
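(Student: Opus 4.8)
The plan is to combine the symmetric resolvent identity with the threshold expansion of the perturbed resolvent, as in our earlier works \cite{EGWaveOp, EGWaveOpExt, egl}. Write $V = U v^2$ with $v = |V|^{1/2}$ and $U = \sgn(V)$, and set $M^\pm(\lambda) = U + v\mR_0^\pm(\lambda) v$; then
\[
\mR_V^\pm(\lambda) = \mR_0^\pm(\lambda) - \mR_0^\pm(\lambda)\, v\, M^\pm(\lambda)^{-1}\, v\, \mR_0^\pm(\lambda).
\]
Substituting into $W_{low,\kappa}$, the term without $M^\pm(\lambda)^{-1}$ belongs to the Born series and is bounded on $L^p(\R^n)$ for all $1 \le p \le \infty$ by \cite{EGWaveOp}; so it suffices to control the contribution of $v\, M^+(\lambda)^{-1}\, v$. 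I would split $M^+(\lambda)^{-1} = \Gamma_{\rm reg}(\lambda) + \Gamma_{\rm sing}(\lambda)$, where $\Gamma_{\rm reg}(\lambda)$ is uniformly bounded on $L^2$ with sufficiently many $\lambda$-derivatives decaying as $\lambda \to 0^+$, so that its contribution is handled by exactly the machinery that controls the high-energy remainder and the Born terms, uniformly for $1 \le p \le \infty$, and $\Gamma_{\rm sing}(\lambda) = \sum_j \lambda^{-a_j}\Gamma_j + (\text{milder terms})$ is a finite-rank operator collecting the threshold singularities.

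The expansion of $M^+(\lambda)^{-1}$ near $\lambda = 0$ is produced by the iterated Jensen--Nenciu (Feshbach--Schur) inversion. The absence of threshold resonances forces the first degenerate subspace to coincide with the projection attached to the zero-energy eigenspace, and the hypothesis that this eigenspace is orthogonal to $\{x^\alpha V : |\alpha| \le k_0 - 1\}$ causes a string of the successive degenerate subspaces to collapse; this is precisely the mechanism that lets one extract, at each stage, additional factors of $\lambda^{1/(2m)}$ — equivalently, additional spatial moments of the free-resolvent kernel — from the operators appearing in the inversion. Carrying out this bookkeeping, the leading singularity of $\Gamma_{\rm sing}(\lambda)$ improves from its generic large-dimension order to an order governed by $k_0$, and each $\Gamma_j$ has range and co-range spanned by functions of the form $v\psi_\ell$ (and weighted derivatives), where $\psi_\ell$ runs over an orthonormal basis of the eigenspace. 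Using $\psi_\ell = -\mR_0(0) V\psi_\ell$, one checks that $(\mR_0^+(\lambda) V)^\kappa \mR_0^+(\lambda)\, v\, \Gamma_j\, v\, \mR_0^+(\lambda)(V\mR_0^+(\lambda))^\kappa$ factors, at leading order in $\lambda$, into a finite sum of rank-one operators whose left profile reduces to a multiple of the eigenfunction $\psi_\ell(x)$ itself and whose right profile is an explicit kernel built from $\mR_0^\pm(\lambda)$ applied to $V\psi_\ell$.

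It then remains to bound operators of the form
\[
u \longmapsto \int_0^\infty \chi(\lambda)\, \lambda^{-a}\, F_\ell(\lambda, x)\, \big\langle G_\ell(\lambda, \cdot), u \big\rangle \, d\lambda
\]
on $L^p(\R^n)$, with $F_\ell, G_\ell$ the rank-one profiles above, $G_\ell$ carrying the spectral-measure factor $\mR_0^+(\lambda) - \mR_0^-(\lambda)$ and the extra $(V\mR_0^+(\lambda))^\kappa V$. I would integrate by parts repeatedly in $\lambda$, exploiting the oscillation $e^{\pm i\lambda^{1/(2m)}|x-y|}$ built into the free-resolvent kernels, to trade the singular weight $\lambda^{-a}$ for integrable pointwise bounds on the $\lambda$-integrated kernels of $F_\ell$ and $G_\ell$. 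The gain of $k_0$ moments, together with the near-threshold asymptotics $|\mR_0^\pm(\lambda)(x,y)| \lesssim |x-y|^{2m-n}$ (with the standard logarithmic modification when $n$ is even), produces decay of order $|x|^{2m - k_0 - n}$ for the $x$-profile and the matching integrability for the right profile — exactly the bounds under which the rank-one piece is bounded on $L^p$ for $1 \le p < \frac{n}{2m - k_0}$, degenerating to $1 \le p < \infty$ at $k_0 = 2m$ (where the profile lies in $L^1$ but the $\lambda$-integral is only borderline convergent) and to $1 \le p \le \infty$ at $k_0 = 2m+1$. The hypothesis $\beta > \max(8m - n, n_\star) + 2$ supplies the $\lambda$-regularity of $M^+(\lambda)^{-1}$ and the weighted decay of the eigenfunctions needed for this to close, and $\kappa$ is taken large enough that the $\mR_0^+ V$ factors furnish the smoothing required to absorb low-regularity-in-$\lambda$ terms.

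I expect the main obstacle to be twofold. First, the even-dimensional case, where the logarithmic terms in the free-resolvent expansion force a substantially more intricate expansion of $M^+(\lambda)^{-1}$ and more delicate oscillatory estimates; this is the source of the ``particularly challenging'' even-dimensional analysis noted above. Second, the precise bookkeeping needed to show that the orthogonality hypothesis yields exactly $k_0$ — and not fewer — orders of improvement near the threshold, in particular at the borderline value $k_0 = 2m - \lceil \tfrac{n}{2}\rceil + 1$, where part of the orthogonality is automatic, so that one must verify the ``automatic'' and ``hypothesized'' cancellations combine correctly rather than overlapping.
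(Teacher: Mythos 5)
Your outline reproduces the paper's skeleton (symmetric resolvent identity, Jensen--Nenciu inversion, a split of $M^+(\lambda)^{-1}$ into a regular part plus a finite-rank singular part), but the mechanism you assign to the orthogonality hypothesis is not the one that works, and as stated it is false. With a zero eigenvalue the singular part of $M^+(\lambda)^{-1}$ is exactly $\lambda^{-2m}S_1B^{-1}(\lambda)S_1$ with $B^{-1}(\lambda)$ uniformly bounded (Proposition~\ref{Minvsing}); no amount of orthogonality of the eigenfunctions to $x^\alpha V$ lowers this order, since the eigenvalue is still present and $B(\lambda)\to -T_{k+1}$ stays invertible with a nonzero limit. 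So ``successive degenerate subspaces collapse and the leading singularity improves to an order governed by $k_0$'' does not happen at the level of $M^{-1}(\lambda)$. What the hypothesis actually buys is that the range and co-range of the singular finite-rank factor lie in $S_{k_0+1}L^2$, i.e.\ consist of functions $\phi$ with $\langle v\phi,x^\alpha\rangle=0$ for $|\alpha|\le k_0-1$, and these vanishing moments must then be spent against the two \emph{outer} free resolvents: on the right they are converted into a factor $\lambda^{k_2-\varepsilon}$ via Stone's formula and $|D|^{-k_2+\varepsilon}$ (Lemma~\ref{lem:right_side_cancel}), which is what neutralizes $\lambda^{-2m}$, and on the left one Taylor-expands $\mR_0^+(\lambda^{2m})(|x-z_1|)$ in $z_1$ to gain decay in $|x|$ (Lemma~\ref{lem:left_side_cancel}). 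Your sketch performs neither step; moreover, freezing the left profile at $\lambda=0$ to get ``a multiple of $\psi_\ell(x)$'' discards the oscillation $e^{i\lambda r_1}$ on which the subsequent integration-by-parts bounds (Lemma~\ref{lem:Lambda}) depend. The identity $\psi=-\mR_0(0)V\psi$ is only used to collapse the $\kappa$ inner resolvents ($S_1vA(0)vU=(-1)^\kappa S_1$), never the outer one.

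The closing estimate is also too coarse to give the claimed range. If the singular contribution really reduced to rank-one pieces $F(x)G(y)$ with $|F(x)|\sim|x|^{2m-k_0-n}$ at infinity, then $F\notin L^p$ for $p\le \frac{n}{n-2m+k_0}$ whenever $k_0\le 2m$, so a product bound cannot give boundedness near $p=1$, i.e.\ it fails at the easy end of the asserted range $1\le p<\frac{n}{2m-k_0}$. The actual kernel is not a product: the $\lambda$-integral correlates the decay in $r_1=|x-z_1|$ and $r_2=|z_2-y|$, and one must run the case analysis $r_1\approx r_2$, $r_1\gg r_2$, $r_2\gg r_1$ of Lemma~\ref{lem:Lambda} to see that every regime is admissible except the single term $r_2^{-\eta-1}$ in the regime $r_2\gg r_1$, and that term alone dictates the restriction $p<\frac{n}{2m-k_2-\alpha}$ (the loss of $p=\infty$ at $k_0=2m$ comes from the $y$-decay being just short of $|y|^{-n}$, not from borderline convergence of the $\lambda$-integral). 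Finally, you do not address how the $\kappa$ iterated resolvents are reabsorbed into the singular and error parts (the bounds on $\lambda^{-2m}(A(\lambda)-A(0))$, and the logarithmic correction at $n=4m$), which is precisely where the largeness of $\kappa$ and part of the hypothesis on $\beta$ are consumed.
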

In dimensions $2m<n<4m$, the claim holds for all $\kappa\geq 0$.
Also note that Theorem~\ref{thm:low d natural} is a special case of Theorem~\ref{thm:main_low} by taking $k_0=\max(1,2m-\lceil\frac{n}{2}\rceil+1)=2m-\lceil\frac{n}{2}\rceil+1$, when $2m<n\leq 4m$.

We note that the $p=\infty$ endpoint result is new even in the  classical ($m=1$) case in dimensions $n>3$.  In these cases, it was shown that the wave operators are bounded when $1\leq p<\infty$ if the eigenspace is orthogonal to both $V(x)$ and $xV(x)$, \cite{GGwaveop,YajNew2,Yaj4d}.  We note that Yajima proved the $p=\infty$ boundedness when $n=3$ in \cite{Yaj3d}.  When $m=1$ and $n=4$, we only consider the case when $k_0=3$ in our arguments since $k_0=1,2$ were considered by Yajima in \cite{Yaj4d}.

We need sufficiently large $\kappa$ when $n\geq 4m$ due to local singularities of the free resolvents that are not  square integrable.  The case of $n=4m$ has technical challenges in both the high and low energy regimes due to the existence of zero energy resonances and the need for smoothness of $V$ in the high energy regime, \cite{EGG}.  Our arguments are fairly streamlined, we avoid long operator-valued expansions of the perturbed resolvent by adapting the methods in \cite{EGWaveOp,EGWaveOpExt} to control the singularity as $\lambda \to 0$ that occurs when there is a zero energy eigenvalue.  We also treat both even and odd dimensions in a unified manner.

We recall  the first $L^p$ boundedness result in the seminal paper of Yajima, \cite{YajWkp1}, for $m=1$, $n\geq 3$ and $1\leq p\leq \infty$ for small potentials.  For large potentials, the main difficulty is in controlling the contribution of $W_{low, \kappa}$.  The behavior of this operator differs in even and odd dimensions. In \cite{YajWkp1,YajWkp2,YajWkp3}, Yajima removed smallness or positivity assumptions on the potential for all dimensions $n\geq 3$.  These arguments were simplified and Yajima further considered the effect of zero energy eigenvalues and/or resonances in \cite{Yaj} when $n$ is odd and with Finco in \cite{FY} when $n$ is even for $n>4$ to establish boundedness of the wave operators when $\frac{n}{n-2}<p<\frac{n}{2}$.  These results were further extended to show that the range of $p$ is generically $1\leq p<\frac{n}{2}$ in the presence of a zero energy eigenvalue, and that the upper range of $p$ may be larger under certain orthogonality conditions, \cite{YajNew,GGwaveop,YajNew2}, which was proved independently by Yajima and separately by Goldberg and the second author.

For completeness, we pair this with existing high energy bounds for the wave operators from \cite{EGWaveOp}, which require the following assumptions. 
Let  $\mF({f})$ denote  the Fourier transform of $f$.
\begin{asmp}\label{asmp:Fourier}
	For some $0<\delta \ll 1$, assume that the real-valued potential $V$ satisfies the condition
	\begin{enumerate}[i)]
		
		\item $\big\| \la \cdot \ra^{\frac{4m+1-n}{2}+\delta} V(\cdot)\big\|_{2}<\infty$   when $2m<n<4m-1$,
		
		\item $\big\|\la \cdot \ra^{1+\delta}V(\cdot)\big\|_{H^{\delta}}<\infty$ when $n=4m-1$,
		
		\item  $\big\|\mathcal F(\la \cdot \ra^{\sigma} V(\cdot))\big\|_{L^{ \frac{n-1-\delta}{n-2m-\delta} }}<\infty$ for some $\sigma>\frac{2n-4m}{n-1-\delta}+\delta$   when $n>4m-1$.
	\end{enumerate}
	
\end{asmp}
In \cite{EGWaveOp}, by adapting Yajima's $m=1$ argument in \cite{YajWkp1}, the first two authors showed that the contribution of the terms of the Born series are bounded under these assumptions.
In addition, it was shown that if $|V(x)|\les \la x\ra^{-\beta}$ for some $\beta>n+5$ when $n$ is odd and $\beta>n+4$ when $n$ is even and if $\kappa$ is sufficiently large (depending on $m$ and $n$), then $W_{high,\kappa}$ is a bounded operator on $L^p$ for all $1\leq p\leq \infty$ provided there are no positive eigenvalues, \cite{EGWaveOp,EGWaveOpExt}.  The lack of positive eigenvalues is a vital assumption for higher order operators since there may be positive eigenvalues even for smooth, compactly supported potentials, see \cite{soffernew}.

Combining these facts with Theorem~\ref{thm:main_low}, we have the following result.
\begin{corollary}\label{thm:full} 
	Fix  $n>2m\geq 2$, $k_0\geq 1$, and let  $V$  satisfy Assumption~\ref{asmp:Fourier}.  Under the hypotheses of Theorem~\ref{thm:main_low},
	the wave operators extend to bounded operators on $L^p(\R^n)$ for all $1\leq p< \frac{n}{2m-k_0}$ when $k_0<2m$.  If $k_0= 2m$, then this range is $1\leq p<\infty$, and if $k_0>2m$ the wave operators are also bounded on $L^\infty(\R^n)$.
\end{corollary}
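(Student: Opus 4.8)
\section*{Proof proposal for Corollary~\ref{thm:full}}

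The plan is to assemble the corollary from three ingredients, two of which are quoted from earlier work and one of which is exactly Theorem~\ref{thm:main_low}. Starting from the stationary representation \eqref{eqn:stat rep} for $W_+$, iterate the symmetric resolvent identities $\mR_V^+=\mR_0^+-\mR_0^+V\mR_V^+$ and $\mR_V^+=\mR_0^+-\mR_V^+V\mR_0^+$, each applied $\kappa$ times, to write
$$
W_+ \;=\; I \;-\; \sum_{0\le a+b\le 2\kappa-1} c_{a,b}\,\frac{1}{2\pi i}\int_0^\infty (\mR_0^+V)^a\mR_0^+(V\mR_0^+)^bV[\mR_0^+(\lambda)-\mR_0^-(\lambda)]\,d\lambda \;-\; W_{\mathrm{rem},\kappa},
$$
where $c_{a,b}\in\{\pm1\}$, the finitely many integrals are the Born series terms, and $W_{\mathrm{rem},\kappa}$ is the operator with $\mR_V^+$ sandwiched between $\kappa$ copies of $\mR_0^+V$ on each side, i.e.\ $W_{\mathrm{rem},\kappa}u=\frac1{2\pi i}\int_0^\infty (\mR_0^+V)^\kappa\mR_V^+(V\mR_0^+)^\kappa V[\mR_0^+-\mR_0^-]u\,d\lambda$. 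Inserting $1=\chi(\lambda)+\widetilde\chi(\lambda)$ into this last integral splits $W_{\mathrm{rem},\kappa}=W_{low,\kappa}+W_{high,\kappa}$.

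Next I would invoke the known bounds for the first two pieces. Under Assumption~\ref{asmp:Fourier}, the finitely many Born series terms extend to bounded operators on $L^p(\R^n)$ for the full range $1\le p\le\infty$ by \cite{EGWaveOp} (adapting Yajima's $m=1$ argument in \cite{YajWkp1}). For the high-energy remainder, note that the pointwise decay hypothesis $|V(x)|\lesssim\la x\ra^{-\beta}$ with $\beta>\max(8m-n,n_\star)+2$ carried over from Theorem~\ref{thm:main_low} is stronger than the $\beta>n+5$ ($n$ odd) and $\beta>n+4$ ($n$ even) required in \cite{EGWaveOp,EGWaveOpExt}, since $n_\star+2$ equals $n+6$ and $n+5$ in the odd and even cases respectively; combined with the absence of positive eigenvalues, this gives that for all sufficiently large $\kappa$ the operator $W_{high,\kappa}$ is bounded on $L^p(\R^n)$ for $1\le p\le\infty$. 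Finally, Theorem~\ref{thm:main_low}, whose hypotheses are precisely those of the corollary, controls $W_{low,\kappa}$ on $1\le p<\frac{n}{2m-k_0}$ when $k_0<2m$, on $1\le p<\infty$ when $k_0=2m$, and on $1\le p\le\infty$ when $k_0>2m$, again for all sufficiently large $\kappa$.

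To finish, fix $\kappa$ large enough that both the high-energy estimates and Theorem~\ref{thm:main_low} apply (in dimensions $2m<n<4m$ any $\kappa\ge0$ suffices; for $n\ge4m$ take the maximum of the two thresholds). Since the identity, the Born terms, and $W_{high,\kappa}$ are each bounded on the whole scale $1\le p\le\infty$, the intersection of the admissible ranges equals the range contributed by $W_{low,\kappa}$, so $W_+$ is bounded on $L^p(\R^n)$ for the stated $p$. The same conclusion for $W_-$ follows from $W_-=\mathcal C W_+\mathcal C$ with $\mathcal C u=\overline u$, which is an isometry on every $L^p(\R^n)$.

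I expect the only genuine obstacle to be bookkeeping rather than analysis: one must check that the single pair of hypotheses inherited from Theorem~\ref{thm:main_low} and Assumption~\ref{asmp:Fourier} is simultaneously strong enough for all three ingredients, and that the number of $\mR_0^\pm V$ factors peeled off in forming $W_{low,\kappa}$ is consistent with the value of $\kappa$ for which the quoted high-energy and Born-series bounds were established. Both points are immediate from the way $\kappa$ enters the definitions of $W_{low,\kappa}$ and $W_{high,\kappa}$, so no new estimate is needed.
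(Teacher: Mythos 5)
Your proposal is correct and follows the same route as the paper: the corollary is obtained simply by combining the $L^p$ boundedness (for all $1\le p\le\infty$) of the Born series terms and of $W_{high,\kappa}$ from \cite{EGWaveOp,EGWaveOpExt} with Theorem~\ref{thm:main_low} for $W_{low,\kappa}$, and your verification that $\beta>\max(8m-n,n_\star)+2$ dominates the decay needed for the high-energy bound, together with the conjugation argument for $W_-$, matches the paper's (essentially one-line) argument.
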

By applying the intertwining identity and the known $L^p\to L^{p'}$ dispersive bounds when $p'$ is the H\"older conjugate of $p$ for the free solution operator $e^{-it(-\Delta)^m}$, we obtain the corollary below. 
\begin{corollary}
	Under the assumptions of Corollary~\ref{thm:full} with $2\leq p\leq\infty$ in the appropriate ranges, we obtain the dispersive estimates
	$ \|e^{-itH}P_{ac}(H)f\|_{p} \les |t|^{-\frac{n}{ m}(\frac{1}{2}-\frac{1}{p}) } \|f\|_{p'}. $ 
\end{corollary}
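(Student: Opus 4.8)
The plan is to deduce the dispersive estimate directly from Corollary~\ref{thm:full} via the intertwining identity and the known dispersive bound for the free higher order propagator; no new hard analysis is required. Applying the intertwining identity $f(H)P_{ac}(H)=W_\pm f((-\Delta)^m)W_\pm^*$ recalled in the introduction to the bounded Borel function $f(\lambda)=e^{-it\lambda}$ gives
\[
e^{-itH}P_{ac}(H)=W_\pm\, e^{-it(-\Delta)^m}\, W_\pm^*,
\]
so it suffices to control the three factors on the right separately: $W_\pm$ on $L^p$, the free propagator from $L^{p'}$ to $L^p$, and $W_\pm^*$ on $L^{p'}$.

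For the middle factor I would invoke the standard free dispersive estimate. By scaling, the Schwartz kernel of $e^{-it(-\Delta)^m}$ equals $|t|^{-n/2m}K\big((x-y)|t|^{-1/2m}\big)$ with $K(z)=\int_{\R^n}e^{i(|\xi|^{2m}+z\cdot\xi)}\,d\xi$, and since $2m\geq 2$ a routine stationary phase / van der Corput argument shows that $K\in L^\infty(\R^n)$; hence $\|e^{-it(-\Delta)^m}\|_{L^1\to L^\infty}\les |t|^{-n/2m}$. Interpolating this with the $L^2$ unitarity of $e^{-it(-\Delta)^m}$ via Riesz--Thorin yields $\|e^{-it(-\Delta)^m}g\|_p\les |t|^{-\frac{n}{m}(\frac{1}{2}-\frac{1}{p})}\|g\|_{p'}$ for all $2\leq p\leq\infty$.

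For the outer factors, Corollary~\ref{thm:full} provides $L^p$ boundedness of $W_\pm$ for $p$ in the asserted range ($1\leq p<\frac{n}{2m-k_0}$ if $k_0<2m$, $1\leq p<\infty$ if $k_0=2m$, and including $p=\infty$ if $k_0>2m$), and by duality $W_\pm^*$ is then bounded on $L^{p'}(\R^n)$ with the same operator norm. Since that range is an interval containing $1$ and $1\leq p'\leq 2\leq p$, membership of $p$ in the range forces membership of $p'$; this is what ``in the appropriate ranges'' encodes. Chaining the three bounds,
\begin{align*}
	\|e^{-itH}P_{ac}(H)f\|_p
	&\leq \|W_\pm\|_{L^p\to L^p}\,\| e^{-it(-\Delta)^m}W_\pm^*f \|_p\\
	&\les |t|^{-\frac{n}{m}(\frac{1}{2}-\frac{1}{p})}\|W_\pm^*f\|_{p'}
	\les |t|^{-\frac{n}{m}(\frac{1}{2}-\frac{1}{p})}\|f\|_{p'},
\end{align*}
which is the assertion.

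I do not anticipate a genuine obstacle: the whole difficulty of the paper already lies in Theorems~\ref{thm:low d natural}--\ref{thm:main_low} and Corollary~\ref{thm:full}. The only steps needing any care are the free dispersive bound for $m\geq 2$---specifically that the $t=1$ kernel $K$ carries no logarithmic or polynomial loss, which is classical for $(-\Delta)^m$---together with the routine checks that Riesz--Thorin produces exactly the exponent $\frac{n}{m}(\frac{1}{2}-\frac{1}{p})$ and that $p'$ remains within the admissible range for the wave operators.
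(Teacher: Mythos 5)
Your proposal is correct and follows essentially the same route the paper takes (the paper proves this corollary in one sentence: intertwining identity, $L^p$ boundedness of $W_\pm$ together with duality for $W_\pm^*$, and the free $L^{p'}\to L^p$ dispersive bound for $e^{-it(-\Delta)^m}$). Your additional details on the free kernel bound and Riesz--Thorin interpolation, and the observation that $p'\in[1,2]$ automatically lies in the admissible range, are accurate fillings-in of that sketch.
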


The study of the $L^p$ boundedness of the wave operators in the higher order $m>1$ case was initiated fare more recently than that of the classical case $m=1$.  The first paper to study the $m>1$ problem was that of Goldberg and the second author, \cite{GG4wave}, in which they showed that the wave operators are bounded on $1<p<\infty$  $m=2$ and $n=3$.  The case $n>2m$ was studied by the first two authors in \cite{EGWaveOp,EGWaveOpExt}.  In \cite{MWY},  Mizutani, Wan, and Yao  considered the case of $m=2$ and $n=1$ showing that the wave operators are bounded when $1<p<\infty$, but not when $p=1,\infty$, where weaker estimates involving the Hardy space or BMO were proven depending on the type of threshold obstruction.

In \cite{EGG} the first two authors and Goldberg showed that a certain amount of smoothness of the potentials is necessary to control the large energy behavior in the $L^p$ boundedness.  The effect of a zero energy eigenvalue without additional orthogonality assumptions in high dimensions $n>4m$ was studied by the authors in \cite{egl}.  Mizutani, Wan, and Yao \cite{MWY3d,MWY3d2} studied the endpoint behavior and the effect of zero energy resonances when $m=2$ and $n=3$, showing that the wave operators are unbounded at the endpoints $p=1,\infty$, as well as study the effect of zero energy obstructions.

More recently in \cite{GY}, Galtbayar and Yajima considered the case  $m=2$ and $n=4$ showing that the wave operators are bounded on $1<p<\infty$ if zero is regular with restrictions on the upper range of $p$ if zero is not regular depending on the type of resonance at zero.   Cheng, Soffer, Wu, and Yao \cite{CSWY25} studied the general threshold obstruction case for odd $n$ with $1\leq n\leq 4m-1$ with sharp results in the presence of threshold resonances.  This recent work on higher order, $m>1$, Schr\"odinger operators has roots in the work of Feng, Soffer, Wu and Yao \cite{soffernew} which considered time decay estimates between weighted $L^2$ spaces.

The paper is organized as follows.  In Section~\ref{sec:res exp}, we develop a representation of the $W_{low,\kappa}$ in terms of a sum of a bounded and sufficiently smooth operator and a singular but finite rank operator acting on free resolvents, see \eqref{Gammakrep} below. We  also state  Proposition~\ref{prop:low tail eigenspace}, which we use to control  the singular part of $\Gamma_\kappa$. We then reduce the proof of Theorem~\ref{thm:main_low} to the representation \eqref{Gammakrep} and Proposition~\ref{prop:low tail eigenspace}. In Section~\ref{sec:prop pf}, we prove Proposition~\ref{prop:low tail eigenspace}  partially relying on    the arguments in \cite{EGWaveOpExt,egl} and on resolvent expansions tailored to the case of zero energy eigenvalues with a general level of orthogonality.  The remaining sections of the paper are devoted to the proof of representation \eqref{Gammakrep}.

\section{Preliminaries}\label{sec:res exp}
In this section we provide the main technical results needed to prove Theorem~\ref{thm:main_low}, of which Theorem~\ref{thm:low d natural} is a special case.  The main technical achievement is Proposition~\ref{prop:low tail eigenspace}, which is developed in some generality and may be used to prove boundedness of the wave operators in the presence of eigenvalues and/or resonances.

It is convenient to use a change of variables to represent $W_{low,\kappa}$ as  
$$
\frac{m}{\pi i}\int_{0}^\infty \chi(\lambda ) \lambda^{2m-1} (\mR_0^+(\lambda^{2m}) V)^{\kappa } \mR_V^+(\lambda^{2m})  (V\mR_0^+(\lambda^{2m}) )^\kappa V   [\mR_0^+(\lambda^{2m})-\mR_0^-(\lambda^{2m})]  \, d\lambda
$$
For simplicity, we write $\chi(\lambda)$ to mean $\chi(\lambda^{2m})$.
We begin by using the symmetric resolvent identity: $$
\mR_V^+(\lambda^{2m})V=\mR_0^+(\lambda^{2m})vM^+(\lambda)^{-1}v.
$$
Here, $M^+(\lambda)=U+v\mR_0^+(\lambda^{2m})v$, where   $v=|V|^{\f12}$, $U(x)=1$ if $V(x)\geq 0$ and $U(x)=-1$ if $V(x)<0$.  Throughout, we'll assume that $H$ has no embedded eigenvalues and only an eigenvalue at zero. In which case, we have suitable expansions for $M^+(\lambda)^{-1}$ on $(0,\lambda_0)$ for some sufficiently small $\lambda_0 <1$. We select the cut-off $\chi$ to be supported in $(-\lambda_0,\lambda_0)$.   
Using the symmetric resolvent identity,  we have
\be \label{Wlowk}
W_{low,\kappa}=	\frac{m}{\pi i}\int_{0}^\infty \chi(\lambda) \lambda^{2m-1} \mR_0^+(\lambda^{2m}) v\Gamma_\kappa(\lambda) v   [\mR_0^+(\lambda^{2m})-\mR_0^-(\lambda^{2m})]  \, d\lambda,
\ee
where $\Gamma_0(\lambda):= M^+(\lambda)^{-1} $ and for $\kappa\geq 1$
\be \label{Gammalambda}
\Gamma_\kappa(\lambda):= Uv\mR_0^+(\lambda^{2m}) \big(V\mR_0^+(\lambda^{2m})\big)^{\kappa-1} v[M^+(\lambda)^{-1}]v \big( \mR_0^+(\lambda^{2m})V\big)^{\kappa-1} \mR_0^+(\lambda^{2m})vU.
\ee

We define the following subspaces and their associated projections as in \cite{soffernew}, also see \cite{egl}:   
\begin{align}
	S_1L^2(\R^n):=&\ \mathrm{Ker}(T_0)\nonumber\\
	S_{i+1}L^2(\R^n):=& \ \{\phi \in S_1L^2:\gen{v\phi, x^\alpha}=0, |\alpha|\leq i-1\},&i\geq 1,\label{defn:Si}
\end{align}
where $T_0=M^+(0)=U+v\mR_0^+(0)v$. 

Note  that when $n>4m$, a distributional solution of $H\psi =0$ is a zero energy eigenfunction of $H$ if and only if  $\phi:=Uv\psi\in  S_1L^2 $. However, for $2m<n\leq 4m$, the space $S_1L^2$ corresponds to eigenvalues and various types of resonances, classified by their membership to $S_{i+1}L^2$, $0\leq i\leq k$, where 
$$k=k(n,m):=2m-\lceil\tfrac{n}{2}\rceil+1,$$ 
see \cite{soffernew}. Since we are considering the eigenvalue only case, in dimensions $2m<n\leq 4m$  we have $S_1=S_{k+1}$.
For this reason, in Theorem~\ref{thm:main_low}, we take $k_0\geq k$ in dimensions $2m<n\leq 4m$. We take $k_0\geq 1$ in higher dimensions, as we already studied the case $k_0=0$  in \cite{egl}.

We'll split our operator $\Gamma_\kappa(\lambda)$  into an operator containing the singularities in the spectral parameter, $\lambda$,  and a bounded operator which is sufficiently smooth in $\lambda$, quantified by  Proposition~2.1 of \cite{EGWaveOpExt}:
\begin{prop}\label{prop:low tail low d} Fix  $n>2m\geq 2$  and let $\Gamma$ be a $\lambda $ dependent absolutely bounded operator. Let 
	$$
	\widetilde \Gamma (x,y):= \sup_{0<\lambda <\lambda_0}\Big[|\Gamma(\lambda)(x,y)|+ \sup_{1\leq \ell\leq  \lceil \tfrac{n}2\rceil +1  } \big|\lambda^{\ell-1} \partial_\lambda^\ell \Gamma(\lambda)(x,y)\big| \Big].
	$$
	For $2m<n<4m$ assume that $\widetilde \Gamma $ is bounded on $L^2$, and for $n\geq 4m$  assume that $\widetilde\Gamma $ satisfies   \be\label{eq:tildegamma}
	\widetilde \Gamma (x,y)  \les \la x\ra^{-\frac{n}2-}\la y\ra^{-\frac{n}2-}.
	\ee Then the operator with kernel 
	\be\label{Kdef}
	K(x,y)=\int_0^\infty \chi(\lambda) \lambda^{2m-1} \big[\mR_0^+(\lambda^{2m}) v \Gamma(\lambda)  v [\mR_0^+(\lambda^{2m}) -\mR_0^-(\lambda^{2m})]\big](x,y)   d\lambda 
	\ee
	is bounded on $L^p$ for $1\leq p\leq \infty$ provided that $\beta>n$.
\end{prop}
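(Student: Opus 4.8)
The plan is to strip off the part of the free resolvents responsible for the slow decay, reduce \eqref{Kdef} to one-dimensional oscillatory integrals in the spectral variable $\lambda$, and control those by a mixture of integration by parts and crude size estimates, so that a Schur test closes the argument over the full range $1\le p\le\infty$.

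\emph{Reduction to the Schr\"odinger resolvent.} Factoring $z^m-\lambda^{2m}=\prod_{j=0}^{m-1}(z-e^{2\pi ij/m}\lambda^2)$ and using partial fractions, write $\mR_0^{\pm}(\lambda^{2m})=\tfrac1m\lambda^{2-2m}R_0^{\pm}(\lambda^2)+\lambda^{2-2m}\sum_{j\ne0}c_jR_0(e^{2\pi ij/m}\lambda^2)$, with $R_0$ the resolvent of $-\Delta$, and similarly for $\mR_0^+-\mR_0^-$. Every kernel with $j\ne0$ carries a factor $e^{-c\lambda|x-y|}$ with $c>0$ (the remaining $m$-th roots lie off $[0,\infty)$, so the corresponding Hankel functions decay exponentially); inserted into \eqref{Kdef} and estimated via $|\Gamma(\lambda)(z_1,z_2)|\le\widetilde\Gamma(z_1,z_2)$, $v\les\la\cdot\ra^{-\beta/2}$ and (when $n\ge 4m$) \eqref{eq:tildegamma}, these produce a kernel that is integrable in each variable uniformly --- here $\beta>n$ is exactly what makes $\la z_1\ra^{-\beta/2}\widetilde\Gamma(z_1,z_2)\la z_2\ra^{-\beta/2}$ integrable in $z_1,z_2$ --- hence an operator bounded on every $L^p$. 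It therefore suffices to treat the ``main'' term, in which each $\mR_0^{\pm}(\lambda^{2m})$ is replaced by $\lambda^{2-2m}R_0^{\pm}(\lambda^2)$ and $\mR_0^+-\mR_0^-$ by $\lambda^{2-2m}(R_0^+-R_0^-)(\lambda^2)$.

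\emph{Bessel splitting.} Using the Hankel representation of $R_0^{\pm}(\lambda^2)$ and the Bessel-$J$ representation of $R_0^+-R_0^-$, split each kernel with a smooth cutoff at $\lambda|x-z_1|\sim1$ and at $\lambda|z_2-y|\sim1$. The total power of $\lambda$ in the integrand of \eqref{Kdef} is $\lambda^{n-2m}$ with $n-2m\ge1$; on the small-argument pieces the Bessel asymptotics contribute a further nonnegative power of $\lambda$, so the $\lambda$-integral over $(0,\lambda_0)$ converges absolutely and the remaining kernel is dominated by $\la z_1\ra^{-\beta/2}\widetilde\Gamma(z_1,z_2)\la z_2\ra^{-\beta/2}$ against the local singularities $|x-z_1|^{-(n-2m)}$, $|z_2-y|^{-(n-2m)}$, which is Schur-bounded for $\beta>n$. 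On the large-argument pieces one has the factorizations $R_0^{\pm}(\lambda^2)(x,z_1)=e^{\pm i\lambda|x-z_1|}\lambda^{\frac{n-3}2}|x-z_1|^{-\frac{n-1}2}\omega(\lambda|x-z_1|)$ and $(R_0^+-R_0^-)(\lambda^2)(z_2,y)=\sum_{\pm}e^{\pm i\lambda|z_2-y|}\lambda^{\frac{n-3}2}|z_2-y|^{-\frac{n-1}2}\omega(\lambda|z_2-y|)$, with $\omega$ denoting (possibly different) symbols of order $0$, so the contribution of these pieces is a finite sum of
\[
\int\!\!\int v(z_1)v(z_2)\,|x-z_1|^{-\frac{n-1}2}|z_2-y|^{-\frac{n-1}2}\Big[\int_0^\infty\chi(\lambda)\lambda^{n-2m}e^{i\lambda(r\pm s)}\omega(\lambda r)\omega(\lambda s)\Gamma(\lambda)(z_1,z_2)\,d\lambda\Big]\,dz_1\,dz_2,
\]
where $r=|x-z_1|$ and $s=|z_2-y|$. (Throughout, in dimensions $2m<n<4m$ one may instead keep $\widetilde\Gamma$ inside an $L^2$ pairing and use only its $L^2$ boundedness via Cauchy--Schwarz, since then $|x-z|^{-(n-2m)}$ is locally square integrable; this is why the hypothesis on $\widetilde\Gamma$ is weaker there.)

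\emph{The oscillatory estimate --- the crux, and conclusion.} For the $+$ sign, $r+s\sim\max(r,s)$, and integrating by parts in $\lambda$ up to $\lceil\tfrac{n}{2}\rceil+1$ times --- each step lowering the power of $\lambda$, differentiating the order-zero symbols (harmless, since $\lambda\gtrsim\max(r,s)^{-1}$ on this set) or $\Gamma$, in which case the weights $\lambda^{\ell-1}\partial_\lambda^\ell$ in the definition of $\widetilde\Gamma$ are precisely what is used, and producing only a harmless boundary term from $\chi'$ at $\lambda\sim\lambda_0$ --- yields decay $(r+s)^{-(\lceil n/2\rceil+1)}$, from which, after a routine dyadic decomposition in $r+s$, one reads off a Schur-bounded kernel (using $\beta>n$ and $(n-1)/2<n$). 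The genuine obstacle is the $-$ sign when $r\approx s$. Here the decisive point is that we are at low energy, $\lambda<\lambda_0$: the phase $\lambda(r-s)$ is honestly oscillatory only when $|r-s|\gtrsim\lambda_0^{-1}$, where the same integration by parts gives $|r-s|^{-(\lceil n/2\rceil+1)}$ decay; on the complementary set $|r-s|\les1$ one simply bounds the inner integral by $\widetilde\Gamma(z_1,z_2)\int_0^{\lambda_0}\lambda^{n-2m}\,d\lambda\les\widetilde\Gamma(z_1,z_2)$, and then $|x-z_1|^{-\frac{n-1}2}|z_2-y|^{-\frac{n-1}2}\sim|x-z_1|^{-(n-1)}$ while the $y$-slice of this set is an $O(1)$-thick annulus about $z_2$ of radius $|x-z_1|$, hence of volume $\les|x-z_1|^{n-1}$, so $\int|x-z_1|^{-(n-1)}\,dy$ over it is $\les1$; the remaining $z_1,z_2$ integrations of $v(z_1)\widetilde\Gamma(z_1,z_2)v(z_2)$ converge since $\beta>n$, and a symmetric estimate controls the $x$-integral. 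Collecting all the pieces and invoking Schur's test gives boundedness of the operator with kernel \eqref{Kdef} on $L^p$ for every $1\le p\le\infty$. I expect the delicate bookkeeping of powers of $\lambda$ through the integrations by parts on the large-argument, near-resonant pieces --- and the verification that this near-resonant part really produces a Schur-bounded kernel, rather than merely an operator bounded on $L^p$ for $1<p<\infty$ --- to be the main difficulty.
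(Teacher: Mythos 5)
Note first that the paper does not reprove this statement: it is quoted from Proposition~2.1 of \cite{EGWaveOpExt}, whose argument works directly with the higher order resolvent representations of Lemma~\ref{prop:F} and a case analysis in the relative sizes of $r_1=|x-z_1|$, $r_2=|z_2-y|$, in the spirit of Lemma~\ref{lem:Lambda}. Your overall plan (one-dimensional oscillatory $\lambda$-integrals, up to $\lceil\frac{n}{2}\rceil+1$ integrations by parts paid for by the $\lambda^{\ell-1}\partial_\lambda^{\ell}$ weights in $\widetilde\Gamma$, and the annulus-volume argument on the resonant set $r_1\approx r_2$) is in that spirit, but two of your reductions contain genuine gaps. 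First, the claim that every piece in which the oscillation is discarded --- the $j\neq 0$ partial-fraction terms and the ``small-argument'' pieces --- yields a kernel ``integrable in each variable uniformly'' is false: in all of those pieces the only possible decay in $y$ comes from the factor $[\mR_0^+-\mR_0^-](\lambda^{2m})(z_2,y)$, whose absolute value is $\les \lambda^{n-2m}\la \lambda |z_2-y|\ra^{\frac{1-n}{2}}$; integrating in $\lambda$ with the available powers gives at best $|z_2-y|^{-n}$ in the regime $|z_2-y|\gg \la |x-z_1|\ra$, $\lambda|z_2-y|\les 1$ (and generically only $|z_2-y|^{-\frac{n-1}{2}}$ or $|z_2-y|^{-(n-2m+2)}$), which is not integrable in $y$, so no Schur bound follows from absolute values there. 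This is exactly the delicate region (compare case (iv) of Lemma~\ref{lem:Lambda}, and note that in Proposition~\ref{prop:low tail eigenspace} it is precisely this contribution that limits the range of $p$); your sketch leaves it, together with the mixed regimes $\lambda r_1\les 1\les \lambda r_2$ and vice versa, untreated, and your closing sentence concedes that this verification is missing.

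Second, the partial-fraction reduction is incompatible with the low-dimensional hypothesis. After replacing $\mR_0^{\pm}(\lambda^{2m})$ by $\lambda^{2-2m}R_0^{\pm}(\lambda^2)$ plus exponentially damped terms, each individual term carries the local singularity $|x-z_1|^{2-n}$ together with the singular prefactor $\lambda^{2-2m}$; the behavior $|x-z_1|^{2m-n}$ you invoke is recovered only by resumming all $m$ terms, including the ones you discarded at the outset. Hence your parenthetical justification for $2m<n<4m$ --- pairing $\widetilde\Gamma$ in $L^2$ because ``$|x-z|^{-(n-2m)}$ is locally square integrable'' --- does not apply to the kernels your decomposition actually produces: $|x-z_1|^{2-n}$ fails to be locally $L^2$ as soon as $n\geq 4$ (e.g. $m=2$, $n=5,6,7$), so the case where only $L^2$-boundedness of $\widetilde\Gamma$ is assumed is not covered. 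The cited proof avoids both problems by never splitting the higher order resolvent: it uses $\mR_0^+(\lambda^{2m})=e^{i\lambda r}r^{2m-n}F(\lambda r)$ and \eqref{Frep} directly, keeps the symbols $F$, $F_{\pm}$ through every integration by parts, and only afterwards performs the case analysis in $r_1$ versus $r_2$.
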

To define the  singular part, we need to develop some notation.  We say a finite rank operator is of the form $(k_1,k_2,\alpha)$ if it can be written as
\begin{align*}
	\omega^{\alpha}(\lambda)S_{k_1+1}\Gamma S_{k_2+1}, \quad \text{when }k_1,k_2>0,\\
	\omega^{\alpha}(\lambda)S_{k_1+1} \Gamma , \quad \text{when }k_2=0\\
	\omega^{\alpha}(\lambda)\Gamma S_{k_2+1} , \quad \text{when }k_1=0,
\end{align*}
where, in each occurance, $\Gamma$ is a $\lambda$ independent,  bounded operator, and $\omega^\alpha(\lambda)$ is an arbitrary function of $\lambda$ satisfying
\be\label{omegalambda} | \partial^N_\lambda \omega^\alpha(\lambda)|\les \lambda^{\alpha-N},\,\,N=0,1,2,...,  \lceil \tfrac{n}2\rceil +1. 
\ee

We will prove that under the assumptions of Theorem~\ref{thm:main_low}, we have
\be\label{Gammakrep} 
\Gamma_\kappa(\lambda)=\lambda^{-2m} \Gamma_s (\lambda) +\Gamma_e(\lambda),
\ee
where the singular part, $\Gamma_s$, is a finite linear combination of operators of the form 
$$(k_1,k_2,\alpha)\in  \{(k_0,k_0,0), (k_0,0,k_0), (0,k_0,k_0)\}, \,\,\text{ when } 2m< n<4m,$$
$$
(k_1,k_2,\alpha)=(k_0,k_0,0),\,\,\text{ when } n\geq 4m, 
$$  
and the error term, $\Gamma_e(\lambda)$, satisfies the hypothesis of Proposition~\ref{prop:low tail low d}. Therefore, the contribution of the error term  to 
$W_{low,\kappa} $, see \eqref{Wlowk}, is bounded on $L^p$ for all $1\leq p\leq \infty$. 

To take care of the contribution of the singular part, we will prove  
\begin{prop}\label{prop:low tail eigenspace} 
	Fix $n>2m\geq 2$,   $\alpha \geq 0$, $k_1\geq 0$, and $0\leq k_2 \leq n $ so that 
	\be\label{k1k2alpha}
	k_1+k_2+\alpha > 4m-n\,\,\text{ and }\,\, k_2+\alpha>0.
	\ee
	In addition, assume that $k_1\geq 1$ for $n\geq 4m$. Let $\phi_1\in S_{k_1+1}L^2$ if $k_1\geq 1$, and $\phi_2\in S_{k_2+1}L^2$ if $k_2\geq 1$, otherwise we assume $\phi_j\in L^2$.  If $\beta>\max(n,2m+k_1,2m+k_2)$, then the operator with kernel 
	\be\label{Kdef_eigen_2}
	K(x,y)=\int_0^1 \omega^{\alpha-1}(\lambda) \big[\mR_0^+(\lambda^{2m}) v\phi_1\overline{\phi_2}v[\mR_0^+(\lambda^{2m}) -\mR_0^-(\lambda^{2m})]\big](x,y)   d\lambda 
	\ee
	is bounded on $L^p$ for $1\leq p<\frac{n}{2m-k_2-\alpha}$. Moreover, the range is $[1,\infty)$ if $k_2+\alpha=2m$ and it is $[1,\infty]$ if $k_2+\alpha>2m$.
\end{prop}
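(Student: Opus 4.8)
The plan is to reduce the kernel \eqref{Kdef_eigen_2} to a finite sum of one-dimensional oscillatory integrals in $\lambda$ whose $L^p$ mapping properties can be read off directly, in the spirit of \cite{EGWaveOpExt,egl}. Two inputs drive this. First, the partial fraction identity $\mR_0^\pm(\lambda^{2m}) = \frac{\lambda^{2-2m}}{m}\sum_{j=0}^{m-1}\zeta_j\,(-\Delta-\zeta_j\lambda^2)^{-1}$ with $\zeta_j=e^{2\pi ij/m}$, in which only the $j=0$ branch jumps across the continuous spectrum; hence $[\mR_0^+(\lambda^{2m})-\mR_0^-(\lambda^{2m})](z_2,y)=c\,\lambda^{n-2m}\,\Omega(\lambda(z_2-y))$, where $\Omega$ is a constant multiple of the Fourier transform of the surface measure on $S^{n-1}$: smooth and radial, with $\Omega(0)\neq0$ and $\Omega(r)=e^{\pm ir}b_\pm(r)$ for $r\gtrsim1$, $b_\pm$ symbols of order $-\tfrac{n-1}{2}$. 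Second, $\mR_0^+(\lambda^{2m})(x,z_1)=|x-z_1|^{2m-n}G^+(\lambda|x-z_1|)$ with $G^+$ smooth, $G^+(0)\neq0$, $G^+(r)=e^{ir}g(r)$ for $r\gtrsim1$ with $g$ a symbol (and, for $m>1$, off-spectrum branches that decay or oscillate at a different frequency and are handled the same way). All of this is standard and used in \cite{EGWaveOp,egl}.

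\textbf{Using the orthogonality.} Since $\phi_2\in S_{k_2+1}L^2$, the density $v\overline{\phi_2}$ annihilates polynomials of degree at most $k_2-1$. Subtracting from $\Omega(\lambda(z_2-y))$ its degree-$(k_2-1)$ Taylor polynomial in $z_2$ about the origin leaves a remainder bounded by $(\lambda|z_2|)^{k_2}\langle\lambda(z_2-y)\rangle^{-\frac{n-1}{2}}$, because each $\partial_{z_2}$ on $\Omega(\lambda(z_2-y))$ produces a factor $\lambda$. After pairing against $v\overline{\phi_2}$ this converts the $\lambda$-weight in \eqref{Kdef_eigen_2} into $\omega^{\alpha-1}(\lambda)\,\lambda^{\,n-2m+k_2}$ and costs only $\langle\cdot\rangle^{k_2}v\phi_2\in L^1$, i.e.\ $\beta>2m+k_2$. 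When $k_1\ge1$ we do the analogous subtraction on $z_1$ against $v\phi_1$: the $z_1$-derivatives of $\mR_0^+(\lambda^{2m})(x,z_1)$ do not produce powers of $\lambda$ near $\lambda=0$, but the remainder gains $|z_1|^{k_1}$ and the resulting $x$-profile decays like $\langle x\rangle^{2m-n-k_1}$. For $n\ge4m$ this extra decay upgrades the non-$L^2$ behavior of $|x-z_1|^{2m-n}$ to the pointwise bound $\langle x\rangle^{-n/2-}$ required by \eqref{eq:tildegamma}, which is why $k_1\ge1$ is imposed there; in general \eqref{k1k2alpha} is exactly what makes the $\lambda$-integral absolutely convergent at $\lambda=0$ (the $\lambda$-power being $\ge n-2m+k_2+\alpha-1>-1$) after the spatial integrations.

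\textbf{Reduction to a model operator.} Carrying out the $z_1$ and $z_2$ integrations and splitting into the regions $\lambda|x|\lesssim1$ versus $\lambda|x|\gtrsim1$ and $\lambda|y|\lesssim1$ versus $\lambda|y|\gtrsim1$ (separating the power-type and oscillatory regimes of the two kernels), $K(x,y)$ becomes a finite sum of a fully non-oscillatory part and terms of the form
\[
h(x)\int_0^1 \lambda^{\,n-2m+k_2+\alpha-1}\,e^{\pm i\lambda(|x|\pm|y|)}\,a(\lambda,x,y)\,d\lambda ,
\]
with $h(x)\lesssim\langle x\rangle^{2m-n-k_1}$ and $a$ a symbol of order $0$ in $\lambda$ uniformly in $x,y$. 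Repeated integration by parts in $\lambda$, legitimate through $\lceil n/2\rceil+1$ derivatives by \eqref{omegalambda}, trades each unit of frequency for a factor $(1+|x|\pm|y|)^{-1}$ and brings every term down to the non-oscillatory model. Setting $\sigma:=2m-k_2-\alpha$, the resulting kernels are dominated by constant multiples of the kernels treated in \cite{egl,EGWaveOpExt} — schematically $\langle x\rangle^{2m-n-k_1}$ times a decaying, locally integrable factor when $\sigma\le0$ and a fractional-integration-type factor of order $\sigma$ when $\sigma>0$ — and the associated integral operator is bounded on $L^p$ for $1\le p<n/\sigma$, on $[1,\infty)$ when $\sigma=0$, and on $[1,\infty]$ when $\sigma\le0$, by a weighted Schur test, with a fractional-integration estimate needed only to push $p$ toward $n/\sigma$ in the case $\sigma>0$. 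It is in the range $\sigma\le0$, in particular at $p=\infty$, that the $\langle x\rangle^{2m-n-k_1}$ decay from the $S_{k_1+1}$-orthogonality is consumed.

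\textbf{Expected obstacle.} The difficulty is twofold. In even dimensions $\Omega$ and the Laplacian resolvent kernels are Bessel functions whose small- and large-argument behaviors are genuinely different (integer versus half-integer orders, logarithmic terms), so the $\lambda$-power bookkeeping, the count of admissible integrations by parts, and — for $m>1$ — the off-spectrum branches must all be tracked uniformly; this is why one works with $\lceil n/2\rceil+1$ derivatives throughout. The sharper issue is the piece carrying the phase $e^{i\lambda|x-z_1|}e^{-i\lambda|z_2-y|}$, whose phase degenerates near $|x|\approx|y|$: there one cannot integrate by parts in $\lambda$ and must survive on the gained factor $\lambda^{\,n-2m+k_2}$ together with the decay of $h$ and of the symbol in $\lambda|y|$, and this diagonal contribution must be controlled uniformly up to the stated endpoints. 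Establishing that $((-\Delta)^m)^{-1}(v\phi_1)$ really decays like $\langle x\rangle^{2m-n-k_1}$ under the $S_{k_1+1}$-orthogonality, and bringing the even-dimensional estimates to the same range of $p$ as the odd-dimensional ones, are the technical heart of the argument.
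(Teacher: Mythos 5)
Your overall architecture matches the paper's: use the $S_{k_2+1}$ moments to gain a factor $\lambda^{k_2}$ on the right, use the $S_{k_1+1}$ moments via a Taylor expansion of $\mR_0^+(\lambda^{2m})(|x-z_1|)$ to gain spatial decay on the left, then estimate a one-dimensional oscillatory $\lambda$-integral by integrating by parts up to $\lceil n/2\rceil+1$ times, with the $L^p$ range coming from $y$-decay of order $n-2m+k_2+\alpha$. But two points fall short. First, on the right side the paper does not Taylor-expand the spectral measure: it uses the identity $[\mR_0^+-\mR_0^-](\lambda^{2m})v\phi_2=\lambda^{k_2-\varepsilon}[\mR_0^+-\mR_0^-](\lambda^{2m})|D|^{-k_2+\varepsilon}v\phi_2$ (Lemma~\ref{lem:right_side_cancel}) together with the moment conditions to show $|D|^{-k_2+\varepsilon}(v\phi_2)=O(\la \cdot\ra^{-n-\varepsilon})$. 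Your stated remainder bound $(\lambda|z_2|)^{k_2}\la\lambda(z_2-y)\ra^{-\frac{n-1}{2}}$ is not uniformly valid: the Taylor remainder is evaluated at intermediate points $sz_2$, and $\la\lambda(sz_2-y)\ra$ can be far smaller than $\la\lambda(z_2-y)\ra$, so you would need the same cutoff bookkeeping ($|z_2|\ll|y|$ versus $|z_2|\gtrsim|y|$) that the paper performs for the left side in Lemma~\ref{lem:left_side_cancel}. This is fixable, but as written the step is incorrect.

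The genuine gap is in the reduction step. You assert that repeated integration by parts "brings every term down to the non-oscillatory model" and that the resulting kernels are dominated by those already treated in \cite{egl,EGWaveOpExt}; but on the diagonal $|x|\approx|y|$, with the phase $e^{i\lambda(r_1-r_2)}$, no such reduction is possible (as you acknowledge), and the earlier papers contain no estimate covering the growth $\la\lambda r_1\ra^{\frac{n+1}{2}-2m+k_1}$ that your left-side expansion introduces. The paper handles exactly this in Lemma~\ref{lem:Lambda}(ii) and the ensuing computation: two integrations by parts yield the factor $\la\lambda(r_1-r_2)\ra^{-2}$, its $L^1_y$ norm is computed in polar coordinates (contributing $r_1^{n-1}\lambda^{-1}$), and the remaining $\lambda$-integral converges precisely because $\eta+k_1-2m>-1$ — this is where \eqref{k1k2alpha} is actually used, not, as you claim, to make the $\lambda$-integral absolutely convergent at $\lambda=0$ (that only requires $k_2+\alpha>2m-n$). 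Without an argument of this type for the regime $r_1\approx r_2\gg1$, and the companion estimates for $r_1\gg\la r_2\ra$ and $r_2\gg\la r_1\ra$ that produce the $r_2^{-\eta-1}$ term determining the sharp range $p<\frac{n}{2m-k_2-\alpha}$, the proof is not complete; naming these regimes as an "expected obstacle" identifies the right ingredients but does not supply the estimates that constitute the core of the paper's proof.
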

This finishes the proof of Theorem~\ref{thm:main_low} noting that for each $(k_1,k_2,\alpha)$ occurring in the representation \eqref{Gammakrep} we have $\alpha+k_2\geq k_0\geq 1$ and 
$$k_1+k_2+\alpha=2k_0\geq \max(2,2k)=\max(2,4m-2\lceil\tfrac{n}2\rceil+2)>4m-n.$$

With  representation \eqref{Gammakrep}, we reduce the effect of the zero energy obstruction to fairly explicit operators that we control using Proposition~\ref{prop:low tail eigenspace}.  The added layers of orthogonality, measured by $k_1,k_2$, allows us to push the upper bound on $p$ upward.  Although, in the analysis considered here, we take $(k_1,k_2,\alpha)\in \{(k_0,k_0,0), (k_0,0,k_0), (0,k_0,k_0)\}$, we developed these tools with more flexibility to handle the effects of resonances in lower dimensions. We note that Proposition~\ref{prop:low tail eigenspace} can be used to prove results when there are resonances in dimensions $2m<n\leq 4m$.
However, in light of Chen, Soffer, Wu, and Yao's recent result for low odd dimensions, \cite{CSWY25}, we don't pursue that in this paper.

To prove these results  we need the following representations of the free resolvent from Lemma 2.3, Lemma 2.4, and Remark 2.5 in \cite{EGWaveOpExt}, that have their roots in \cite{EGWaveOp}.
\begin{lemma}\label{prop:F} 
	Let $n>2m\geq 2$. Then,   we have the representations, with $r=|x-y|$, 
	$$
	\mR_0^+(\lambda^{2m})(x,y)
	=  \frac{e^{i\lambda r}}{r^{n-2m}} F(\lambda r ), \text{ and}$$ 
	\be\label{Frep}
	[\mR_0^+(\lambda^{2m})-\mR_0^-(\lambda^{2m})](x,y)= \lambda^{n-2m}  \big[ e^{i\lambda r}F_+(\lambda r)+e^{-i\lambda r}F_-(  \lambda r)\big],
	\ee
	where, for all $N\geq 0$, 
	\be\label{Fbounds}
	|\partial_\lambda^N F(\lambda r)|\les \lambda^{-N} \la  \lambda r \ra^{\frac{n+1}2 -2m},\, \,\,\, \,\,\, 
	|\partial_\lambda^N F_\pm(\lambda r)|\les \lambda^{-N} \la  \lambda r\ra^{\frac{1-n}2}.
	\ee 
\end{lemma}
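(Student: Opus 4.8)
The plan is to reduce everything to the classical ($m=1$) resolvent kernel via partial fractions, since $(-\Delta)^m-\lambda^{2m}$ factors completely. Writing $\omega_j=e^{2\pi i j/m}$, $j=0,\dots,m-1$, so that $z^m-\lambda^{2m}=\prod_{j=0}^{m-1}(z-\omega_j\lambda^2)$, a partial-fraction decomposition gives
\[
\mR_0^+(\lambda^{2m})=\frac{1}{m\lambda^{2m-2}}\Big(\mR_0^+(\lambda^2)+\sum_{j=1}^{m-1}\omega_j\,(-\Delta-\omega_j\lambda^2)^{-1}\Big),
\]
where the $j=0$ term is the outgoing boundary value on $[0,\infty)$ and, for $1\le j\le m-1$, the point $\omega_j\lambda^2$ lies off the spectrum, so $(-\Delta-\omega_j\lambda^2)^{-1}$ is the ordinary resolvent with the branch $\sqrt{\omega_j}=e^{\pi i j/m}$ (positive imaginary part). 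Into each summand I would insert the classical kernel $(-\Delta-\mu)^{-1}(x,y)=\frac{i}{4}\big(\tfrac{\sqrt{\mu}}{2\pi r}\big)^{\frac{n-2}{2}}H^{(1)}_{\frac{n-2}{2}}(\sqrt{\mu}\,r)$ and invoke two standard facts about $H^{(1)}_\nu$, $\nu=\tfrac{n-2}{2}$: on $|z|\ge 1$, with $\arg z$ in a fixed compact subset of $(-\pi,\pi)$, $H^{(1)}_\nu(z)=e^{iz}z^{-1/2}a(z)$ with $|\partial_z^N a(z)|\les |z|^{-N}$; and on $|z|\le 2$, a convergent expansion $H^{(1)}_\nu(z)=c_\nu z^{-\nu}\phi_0(z^2)+z^{\nu}\phi_1(z^2)$ (plus an extra term $z^{\nu}(\log z)\phi_2(z^2)$ when $n$ is even), the $\phi_i$ entire with $\phi_0(0)=1$. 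The scaling of the resolvent — each kernel is $r^{2m-n}$ times a function of $\lambda r$ — is what forces $F,F_\pm$ to be functions of the single variable $\lambda r$.

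The high-energy regime $\lambda r\ge 1$ is routine. Since $\mathrm{Im}\sqrt{\omega_j}=\sin(\pi j/m)>0$ for $j\ge 1$, the factor $e^{i\sqrt{\omega_j}\lambda r}$ gives a gain $e^{-c_j\lambda r}$, so after pulling out $e^{i\lambda r}$ the $j\ge 1$ summands contribute $e^{i(\sqrt{\omega_j}-1)\lambda r}$ times a mild amplitude, hence are $\les e^{-c_j\lambda r}$ together with all $\lambda$-derivatives. For $j=0$, the phase-times-amplitude form gives $\mR_0^+(\lambda^2)(x,y)=c\,e^{i\lambda r}r^{2-n}(\lambda r)^{\frac{n-3}{2}}a(\lambda r)$, so after the $\lambda^{2-2m}$ prefactor $\mR_0^+(\lambda^{2m})(x,y)\asymp e^{i\lambda r}r^{2m-n}(\lambda r)^{\frac{n+1}{2}-2m}a(\lambda r)$; thus $F(w)=c\,w^{\frac{n+1}{2}-2m}a(w)$ on $w\ge 1$ and $|\partial_\lambda^N F(\lambda r)|=|r^N F^{(N)}(\lambda r)|\les\lambda^{-N}(\lambda r)^{\frac{n+1}{2}-2m}$. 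For the spectral density, only the $j=0$ resolvent jumps across $[0,\infty)$ (the others being analytic there), so
\[
[\mR_0^+-\mR_0^-](\lambda^{2m})=\tfrac{1}{m\lambda^{2m-2}}[\mR_0^+-\mR_0^-](\lambda^2)=\tfrac{i}{2m\lambda^{2m-2}}\big(\tfrac{\lambda}{2\pi r}\big)^{\nu}J_\nu(\lambda r),
\]
using $H^{(1)}_\nu+H^{(2)}_\nu=2J_\nu$; with $2\nu=n-2$ this is $\lambda^{n-2m}\tilde h(\lambda r)$, $\tilde h(w)=c\,w^{-\nu}J_\nu(w)$. As $J_\nu$ is entire with $J_\nu(w)=O(w^{\nu})$ at $0$, $\tilde h$ is smooth and bounded near $0$. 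To exhibit its oscillation at infinity I would fix a cutoff $\chi$ supported near $w\sim 1$, write $H^{(1)}_\nu(w)=\sqrt{\tfrac{2}{\pi w}}\,e^{iw}b_+(w)$, $H^{(2)}_\nu(w)=\sqrt{\tfrac{2}{\pi w}}\,e^{-iw}b_-(w)$ on $w\gtrsim 1$ with $|\partial^N b_\pm(w)|\les w^{-N}$, and set $F_+(w)=e^{-iw}\chi(w)\tilde h(w)+(1-\chi(w))\,c\,w^{-\nu}\sqrt{\tfrac{2}{\pi w}}\,b_+(w)$ and $F_-(w)=(1-\chi(w))\,c\,w^{-\nu}\sqrt{\tfrac{2}{\pi w}}\,b_-(w)$; then $e^{iw}F_++e^{-iw}F_-=\tilde h$, the $F_\pm$ are bounded near $0$, and $F_\pm(w)\sim w^{-\nu-1/2}b_\pm(w)=w^{-(n-1)/2}b_\pm(w)$ at infinity, giving $|\partial_\lambda^N F_\pm(\lambda r)|\les\lambda^{-N}\langle\lambda r\rangle^{\frac{1-n}{2}}$.

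The delicate part is the low-energy regime $\lambda r\le 1$, where the $\lambda^{2-2m}$ prefactor and the $z^{-\nu}$-behavior of each $H^{(1)}_\nu$ are individually singular as $\lambda\to 0$ but must cancel after the sum over roots of unity. Inserting the small-$z$ expansion, the $c_\nu z^{-\nu}\phi_0(z^2)$ piece together with the $(\sqrt{\omega_j}\lambda)^{\nu}$ in the prefactor produces — after the $\omega_j^{\pm\nu/2}$ factors cancel — a contribution proportional to
\[
\lambda^{2-2m}r^{2-n}\sum_{j=0}^{m-1}\omega_j\,\phi_0(\omega_j\lambda^2 r^2)=\lambda^{2-2m}r^{2-n}\sum_{k\ge 0}e_k\,\lambda^{2k}r^{2k}\sum_{j=0}^{m-1}\omega_j^{k+1},
\]
and since $\sum_{j=0}^{m-1}\omega_j^{k+1}$ equals $m$ when $m\mid (k+1)$ and $0$ otherwise, the lowest surviving term is $k=m-1$, namely $c\,e_{m-1}\,r^{2m-n}$ — the bounded Newtonian leading coefficient, constant in $\lambda$ — while every genuinely $\lambda$-singular term is annihilated and the rest are $O\big((\lambda r)^{2m}r^{2m-n}\big)$. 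The $z^{\nu}\phi_1(z^2)$ (and, for $n$ even, $z^{\nu}(\log z)\phi_2(z^2)$) pieces are $r$-independent to leading order, carry $\lambda^{n-2m}$ after the prefactor, and their root-of-unity sums start at order $(\lambda r)^{n-2m}$ (times $\log(\lambda r)$ for $n$ even) — a bounded contribution to $F$ since $n-2m\ge 1$. Hence for $\lambda r\le 1$, $F$ is a bounded (log-)analytic function with $|\partial_\lambda^N F(\lambda r)|\les r^N\les\lambda^{-N}$, which is \eqref{Fbounds} there since $\langle\lambda r\rangle^{\frac{n+1}{2}-2m}\asymp 1$; patching the two regimes with a cutoff at $\lambda r\sim 1$ completes the representation of $\mR_0^+$.

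I expect this cancellation to be the main obstacle: one must carry the $\omega_j$-versus-$\sqrt{\omega_j}$ powers through the Hankel expansion accurately, keep the parity split between odd $n$ (half-integer $\nu$, no logs, finite expansion) and even $n$ (integer $\nu$, logarithmic terms) straight, and check that precisely the single resonant index $k\equiv m-1\pmod m$ survives at the borderline, so that no negative power of $\lambda$ is left behind and $F$ stays bounded at the threshold despite the $\lambda^{2-2m}$ prefactor. This is essentially Lemma~2.3, Lemma~2.4 and Remark~2.5 of \cite{EGWaveOpExt}, building on \cite{EGWaveOp}.
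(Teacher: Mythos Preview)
The paper does not prove this lemma; it merely cites it from Lemma~2.3, Lemma~2.4, and Remark~2.5 of \cite{EGWaveOpExt} (with roots in \cite{EGWaveOp}), as you yourself note at the end of your proposal. Your outline is correct and is precisely the approach used in those references: the partial-fraction splitting over the $m$th roots of unity, the Hankel/Bessel representation of the classical resolvent, the observation that only the $j=0$ summand jumps across the cut (so the spectral measure is $c\lambda^{n-2m}(\lambda r)^{-\nu}J_\nu(\lambda r)$), the exponential suppression of the $j\ge 1$ terms for $\lambda r\gtrsim 1$, and---the one genuinely delicate point---the roots-of-unity cancellation $\sum_{j=0}^{m-1}\omega_j^{k+1}=m\delta_{m\mid k+1}$ that kills all $\lambda$-singular terms in the small-$\lambda r$ expansion and leaves the Newtonian kernel $c\,r^{2m-n}$ as the threshold value. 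Your handling of the odd/even dichotomy (half-integer $\nu$ with finite expansions versus integer $\nu$ with logarithms) and the cutoff patching at $\lambda r\sim 1$ are also as in the cited works. There is nothing to correct.
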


We utilize the following notation to streamline the low energy arguments. 
Analogous to the definition of  $\omega^\alpha(\lambda)$ abov;
for given $\alpha\in\R$, we denote any function of $\lambda, r\in \R^+$  by $\omega^\alpha(\lambda,r)$ provided that the function is 0 for $\lambda \geq \lambda_0$ and it satisfies  
$$| \partial^N_\lambda \omega^\alpha(\lambda,r)|\les \lambda^{-N} \la \lambda r\ra^\alpha,\,\,N=0,1,2,...$$
The function may differ in each occurrence, for example, for $\lambda<\lambda_0$
$$
[\mR_0^+(\lambda^{2m})-\mR_0^-(\lambda^{2m})](x,y)= \lambda^{n-2m}  \big[ e^{i\lambda r}\omega^{\frac{1-n}2}(\lambda,r) +e^{-i\lambda r}\omega^{\frac{1-n}2}(\lambda,r)\big],\quad  r=|x-y|.
$$
In some cases, the function may depend on other variables. Then,  it should be understood that the bound above is uniform in all other variables. Note that for all $\alpha_1,\alpha_2\in\R$,
$$
\la \lambda r\ra^{\alpha_1} \omega^{\alpha_2}(\lambda,r)  =\omega^{\alpha_1+\alpha_2}(\lambda,r), \,\,\,\text{and}\,\,\,\omega^{\alpha_1} \omega^{\alpha_2} =\omega^{\alpha_1+\alpha_2}.
$$

\section{Proof of Proposition \ref{prop:low tail eigenspace}}\label{sec:prop pf}
In this section we prove Proposition~\ref{prop:low tail eigenspace}.  To do so, we prove a series of Lemmas that utilize the orthogonality between the eigenspace and the potential that allow us to push the range of $p$ upward.
We start with a standard lemma providing decay bounds for the zero energy eigenfunctions (and resonances) of $H$ and associated $\phi\in S_1L^2(\R^n)$:
\begin{lemma}\label{lem:eigen_decay}
	If $|V(x)|\lesssim\gen{x}^{-\beta}$ for some $\beta> 2m$  and $\phi\in S_1L^2$,  then $|\phi(x)| \lesssim\gen{x}^{2m-n-\frac{\beta}2}$.
\end{lemma}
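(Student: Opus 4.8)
\emph{Proof proposal.}
The plan is to convert the membership $\phi\in S_1L^2=\mathrm{Ker}(T_0)$ into an integral equation and then bootstrap pointwise decay; only the facts $\phi\in L^2$ and $T_0\phi=0$ are used, so the argument does not distinguish between eigenfunctions and resonances. Since $T_0=U+v\mR_0^+(0)v$ and $U^2=1$, the relation $T_0\phi=0$ is equivalent to $\phi=-Uv\,\mR_0^+(0)(v\phi)$. For $n>2m$ the threshold free resolvent $\mR_0^+(0)$ is convolution with the positive Green's function $c_{n,m}|x|^{2m-n}$ of $(-\Delta)^m$ (consistent with Lemma~\ref{prop:F} at $\lambda=0$), so
\be\label{eq:phi-inteq}
\phi(x)=-c_{n,m}\,U(x)v(x)\int_{\R^n}\frac{v(y)\,\phi(y)}{|x-y|^{n-2m}}\,dy .
\ee
It is convenient to record $\psi:=\mR_0^+(0)(v\phi)$, so that $\phi=-Uv\psi$, hence $|\phi(x)|\les\la x\ra^{-\beta/2}|\psi(x)|$, and, using $v^2=|V|$ and $U|V|=V$, $\psi$ solves $(-\Delta)^m\psi=-V\psi$.

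\emph{Step 1 (upgrading $L^2$ to $L^\infty$).} Since $\beta>2m$, one has $v\in L^{n/m}(\R^n)$; hence $v\phi\in L^{2n/(n+2m)}$ by H\"older, and the Hardy--Littlewood--Sobolev inequality (valid because $1<\tfrac{2n}{n+2m}<\tfrac{n}{2m}$ when $n>2m$) gives $\psi=\mR_0^+(0)(v\phi)\in L^{2n/(n-2m)}$. Now iterate the identity $\psi=-\mR_0^+(0)(V\psi)$: since $V$ is bounded and decaying, $V\psi$ lies in a range of Lebesgue spaces around that of $\psi$, and each application of Hardy--Littlewood--Sobolev improves the Lebesgue exponent of $\psi$ by $\tfrac{2m}{n}$. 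After finitely many steps $V\psi\in L^{r_0}\cap L^{r_1}$ for some $1<r_0<\tfrac{n}{2m}<r_1$, and then $\psi=-\mR_0^+(0)(V\psi)\in L^\infty$, the local part of the convolution being controlled by the $L^{r_1}$-norm (using $|x-y|^{2m-n}\in L^{r_1'}_{\mathrm{loc}}$) and the tail by the $L^{r_0}$-norm. Consequently $|\phi(x)|\les\la x\ra^{-\beta/2}$.

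\emph{Step 2 (pointwise bootstrap).} With $|\phi(x)|\les\la x\ra^{-\gamma}$ for some $\gamma\ge0$ (initially $\gamma=\beta/2$ from Step~1), plug this into \eqref{eq:phi-inteq} and use the standard estimate
\[
\int_{\R^n}\frac{\la y\ra^{-a}}{|x-y|^{n-2m}}\,dy\les \la x\ra^{2m-\min(a,n)}\qquad (0<2m<n,\ a\neq n),
\]
with an extra factor $\log\la x\ra$ when $a=n$. Since $|v(y)\phi(y)|\les\la y\ra^{-\gamma-\beta/2}$, \eqref{eq:phi-inteq} yields $|\phi(x)|\les\la x\ra^{-\beta/2}\la x\ra^{2m-\min(\gamma+\beta/2,\,n)}$. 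While $\gamma+\tfrac\beta2<n$ the decay rate improves from $\gamma$ to $\gamma+\beta-2m$, a genuine gain because $\beta>2m$; hence after finitely many iterations we reach the regime $\gamma+\tfrac\beta2\ge n$, in which the recursion stabilizes at the fixed point $\gamma=\tfrac\beta2+n-2m$, i.e.
\[
|\phi(x)|\les\la x\ra^{2m-n-\frac\beta2},
\]
as claimed. The borderline case $\gamma+\tfrac\beta2=n$ costs only a logarithm, absorbed on the subsequent (now strictly subcritical, since $\beta+n-2m>n$) iteration.

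\emph{Main obstacle.} The delicate point is Step~1: one full cycle of \eqref{eq:phi-inteq} (multiply by $v$, apply $\mR_0^+(0)$, multiply by $v$) is exponent--neutral on $L^2$, so the gain must be extracted by observing that the intermediate function $\psi$ satisfies $(-\Delta)^m\psi=-V\psi$ with the \emph{bounded} potential $V$ in place of $v$, which is what makes $\mR_0^+(0)$ act gainfully. One also checks that the number of Hardy--Littlewood--Sobolev steps (which grows with $n/2m$) is finite, that the hypothesis $1<r<n/(2m)$ persists, and that $V\psi$ can be made to straddle the exponent $n/(2m)$ using the decay of $V$ (the borderline exponent itself being a non-generic coincidence of $n$ and $m$); all of this is routine. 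The remaining pointwise iteration is identical in even and odd dimensions, since the threshold kernel $|x-y|^{2m-n}$ carries no logarithm when $n>2m$.
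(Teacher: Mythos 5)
Your proposal is correct and follows essentially the same route as the paper: the core of the paper's proof is the identical pointwise bootstrap of the identity $\phi=-UvG_0^0v\phi$ using the weighted estimate $\int\langle y\rangle^{-a}|x-y|^{2m-n}\,dy\lesssim\langle x\rangle^{2m-\min(a,n)}$, split into the cases $\gamma+\tfrac\beta2<n$ and $\gamma+\tfrac\beta2\geq n$. The only difference is your Step 1: the paper obtains the starting bound $|\phi|\lesssim\langle x\rangle^{-\beta/2}$ by citing \cite{soffernew} for $\psi\in L^\infty$ (via iterating $\psi=(-G_0^0V)^k\psi$), whereas you derive it self-containedly by a Hardy--Littlewood--Sobolev iteration, which is a correct and harmless substitution.
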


\begin{proof}
	Recall that $\phi=Uv\psi$, where $\psi\in L^\infty$ is a distributional solution of $H\psi=0$, see, e.g., Section 8 of \cite{soffernew}. Note that the boundedness of $\psi$ may be seen  by iterating the identity $\psi=-G_0^0V\psi=(-G_0^0V)^k\psi$ for sufficiently large $k$. Here $G_0^0=(-\Delta^m)^{-1}$ is the operator with kernel $G_0^0(x,y)=a_0|x-y|^{2m-n}$ for some constant $a_0\in \R\setminus\{0\}$.
	Therefore, we immediately have $|\phi|=|v\psi|\lesssim\gen{x}^{-\beta/2}$.  Using the identity $\phi=-UvG_0v\phi$ and properties of $I_{2m}$, if we assume $|\phi|\lesssim\gen{x}^{-\alpha}$ for some $n>\alpha+\frac{\beta}{2}> 2m$, then
	\[|\phi(x)|=c\left|\int_{\R^n}\frac{v(x)v(y)\phi(y)}{|x-y|^{n-2m}}\ \!dy\right|\lesssim\gen{x}^{-\f\beta2}\int_{\R^n}\gen{y}^{-\f\beta2-\alpha}|x-y|^{2m-n}\ \!dy\lesssim\gen{x}^{-\beta-\alpha+2m}.\]
	Noting that $\beta+\alpha-2m>\alpha$, this process may be iterated to achieve the desired decay.
	If $\alpha+\frac{\beta}{2}\geq n$ then similarly
	\[|\phi(x)|=c\left|\int_{\R^n}\frac{v(x)v(y)\phi(y)}{|x-y|^{n-2m}}\ \!dy\right|\lesssim\gen{x}^{-\f\beta2}\int_{\R^n}\gen{y}^{-\f\beta2-\alpha}|x-y|^{2m-n}\ \!dy\lesssim\gen{x}^{-\f\beta2+2m-n}.\]
\end{proof}
Next we provide two lemmas utilizing the cancellation of $S_{k_1+1}$, $S_{k_2+1}$ with $\mR_0^+(\lambda^{2m})v$ and $v[\mR_0^+-\mR_0^-](\lambda^{2m})$ respectively, that are vital to proving Proposition~\ref{prop:low tail eigenspace}.  In both lemmas we utilize the integral kernels for the free resolvents in \eqref{Kdef_eigen_2}, leading to further integrals in the spatial variables $z_1$ and $z_2$ respectively.

Inspired by Galtbayar and Yajima's recent works \cite{AY4d,AY2dhi}, we have the following.   
\begin{lemma}\label{lem:right_side_cancel}
	
	If $\phi\in S_{k_2+1}L^2$ for some $1\leq k_2\leq n$,  then if $|V(x)|\les \la x\ra^{-\beta}$ for some $\beta>2m+k_2$, and any $0<\varepsilon<k_2$, we have $\varphi(x):=|D|^{-k_2+\varepsilon}[v\phi](x)\lesssim\gen{x}^{-n-\varepsilon}$.  Further, for any Borel function $h$, we have the identity
	\begin{multline*}
		\int^\infty_0 h(\lambda) \int_{\R^n}[\mR_0^+(\lambda^{2m})-\mR_0^-(\lambda^{2m})](r_2)[v\phi](z_2)\ \!dz_2 \ \!d\lambda\\ =\int^\infty_0h(\lambda)   \lambda^{k_2-\varepsilon}\int_{\R^n} [\mR_0^+(\lambda^{2m})-\mR_0^-(\lambda^{2m})](r_2)\varphi (z_2)\ \!dz_2\ \!d\lambda.
	\end{multline*}
\end{lemma}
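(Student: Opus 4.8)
The plan is to prove the two assertions by essentially independent arguments. The decay bound on $\varphi$ will be a Riesz‑potential estimate that exploits the vanishing moments built into the space $S_{k_2+1}L^2$; the operator identity will be a soft Fourier‑side computation resting on the fact that $[\mR_0^+-\mR_0^-](\lambda^{2m})$ restricts the Fourier transform to the sphere $\{|\xi|=\lambda\}$, on which $|D|^{-k_2+\varepsilon}$ acts as the scalar $\lambda^{-k_2+\varepsilon}$.

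For the decay bound, fix $\varepsilon\in(0,1)$ (so that $0<k_2-\varepsilon<n$, using $1\le k_2\le n$). Then $|D|^{-k_2+\varepsilon}$ is the Riesz potential with kernel $c_{n,k_2,\varepsilon}|x-y|^{-(n-k_2+\varepsilon)}$, and $\varphi=c_{n,k_2,\varepsilon}\,|\cdot|^{-(n-k_2+\varepsilon)}\ast[v\phi]$ is an absolutely convergent integral, since the singularity is locally integrable and, by Lemma~\ref{lem:eigen_decay} together with $\beta>2m+k_2$, we have $|v\phi(y)|\les\la y\ra^{2m-n-\beta}\les\la y\ra^{-n-k_2-2\delta_0}$ where $2\delta_0:=\beta-2m-k_2>0$; in particular $v\phi\in L^1\cap L^\infty$ and $\la\cdot\ra^{k_2}v\phi\in L^1$. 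For $|x|\le1$ I would split the convolution at $|x-y|=1$, using local integrability of the kernel on the inner piece and $v\phi\in L^1$ on the outer, to get $|\varphi(x)|\les1$. For $|x|\ge1$, write $|x-y|^{-(n-k_2+\varepsilon)}=P_x(y)+R_x(y)$ with $P_x$ the degree‑$(k_2-1)$ Taylor polynomial in $y$ about the origin; the moment conditions $\int_{\R^n}y^\alpha v\phi(y)\,dy=0$ for $|\alpha|\le k_2-1$, which are precisely the defining property of $\phi\in S_{k_2+1}L^2$ from \eqref{defn:Si}, kill $\int_{\R^n}P_x(y)v\phi(y)\,dy$. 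On $|y|\le|x|/2$ one has $|R_x(y)|\les|y|^{k_2}|x|^{-(n+\varepsilon)}$, which against $\la\cdot\ra^{k_2}v\phi\in L^1$ contributes $\les|x|^{-n-\varepsilon}$; on $|y|>|x|/2$ one keeps the kernel and uses $\la y\ra\gtrsim|x|$ to pull a factor $|x|^{-k_2}$ out of $|v\phi(y)|$, leaving a convolution of the integrable weight $\la\cdot\ra^{-n-2\delta_0}$ with a kernel of order $n-k_2+\varepsilon<n$, which at $|x|\ge1$ is $\les|x|^{-(n-k_2+\varepsilon)}$; the resulting $|x|^{-n-\varepsilon}$, together with the analogous (even faster decaying) bound for the $P_x$‑tail using that the order‑$j$ $y$‑derivatives of $y\mapsto|x-y|^{-(n-k_2+\varepsilon)}$ at $y=0$ are $\les|x|^{-(n-k_2+\varepsilon)-j}$, gives $|\varphi(x)|\les\la x\ra^{-n-\varepsilon}$.

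For the identity it suffices to prove it pointwise in $\lambda>0$ and integrate against $h(\lambda)\,d\lambda$. Writing $y$ for the free spatial variable in $r_2=|z_2-y|$, the left‑hand integrand is $\big([\mR_0^+-\mR_0^-](\lambda^{2m})[v\phi]\big)(y)$ and the right‑hand one is $\lambda^{k_2-\varepsilon}\big([\mR_0^+-\mR_0^-](\lambda^{2m})\varphi\big)(y)$. By the Sokhotski--Plemelj formula $[\mR_0^+-\mR_0^-](\lambda^{2m})$ is the Fourier multiplier $2\pi i\,\delta(|\xi|^{2m}-\lambda^{2m})$; converting the distribution, $\delta(|\xi|^{2m}-\lambda^{2m})=(2m\lambda^{2m-1})^{-1}\,d\sigma_{\{|\xi|=\lambda\}}$, so for $g\in L^1$ and $\lambda>0$ one has $\big([\mR_0^+-\mR_0^-](\lambda^{2m})g\big)(y)=c\,\lambda^{n-2m}\int_{S^{n-1}}\widehat g(\lambda\omega)e^{i\lambda y\cdot\omega}\,d\omega$ for a fixed constant $c$, consistent with the representation \eqref{Frep}. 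Now $\varphi=|D|^{-k_2+\varepsilon}[v\phi]\in L^1$ by the first part, so $\widehat\varphi$ is continuous, and the Fourier characterization of the Riesz potential gives $\widehat{v\phi}(\xi)=|\xi|^{k_2-\varepsilon}\widehat\varphi(\xi)$ as tempered distributions, hence for every $\xi\ne0$ since both sides are continuous away from the origin. Restricting to $|\xi|=\lambda>0$ gives $\widehat{v\phi}(\lambda\omega)=\lambda^{k_2-\varepsilon}\widehat\varphi(\lambda\omega)$; substituting this into the displayed formula with $g=v\phi$ and pulling the scalar $\lambda^{k_2-\varepsilon}$ out of the spherical integral yields $[\mR_0^+-\mR_0^-](\lambda^{2m})[v\phi](y)=\lambda^{k_2-\varepsilon}\,[\mR_0^+-\mR_0^-](\lambda^{2m})\varphi(y)$, and integrating against $h(\lambda)\,d\lambda$ finishes the proof.

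I expect the only genuinely delicate point to be the decay estimate on the region $|y|>|x|/2$: there the Riesz kernel $|x-y|^{-(n-k_2+\varepsilon)}$ is singular and cannot simply be discarded — since $k_2\le n$, the bare power of $|x|$ coming from the size of $v\phi$ alone never reaches $\la x\ra^{-n}$ — so the weight $\la x\ra^{-n-\varepsilon}$ must be produced by carefully balancing the decay $|v\phi(y)|\les\la y\ra^{-n-k_2-2\delta_0}$, which is exactly where the hypothesis $\beta>2m+k_2$ enters, against the loss incurred in convolving with a kernel of order $n-k_2+\varepsilon<n$. Everything on the Fourier side is routine once one observes that $|D|^{-k_2+\varepsilon}$ intertwines with the spectral projection of $(-\Delta)^m$ onto energy $\lambda^{2m}$ up to the scalar $\lambda^{-k_2+\varepsilon}$, which is the mechanism behind Galtbayar and Yajima's cancellation trick that motivates the lemma.
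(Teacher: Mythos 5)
Your proposal is correct and follows essentially the same route as the paper: the identity is obtained by viewing $[\mR_0^+-\mR_0^-](\lambda^{2m})$ as the spectral projection onto $\{|\xi|=\lambda\}$ (where $|D|^{-k_2+\varepsilon}$ acts as the scalar $\lambda^{-k_2+\varepsilon}$), and the decay of $\varphi$ is proved by writing the Riesz kernel as a degree-$(k_2-1)$ polynomial in $y$ plus remainder, killing the polynomial with the moment conditions defining $S_{k_2+1}L^2$, and estimating the remainder on the regions $|y|\ll|x|$ and $|y|\gtrsim|x|$ exactly as in the paper. The only difference is cosmetic: you bound the polynomial tail on $|y|\gtrsim|x|$ separately, while the paper absorbs it into the remainder bound, and you spell out the Fourier-side computation that the paper dispatches by citing Stone's formula.
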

\begin{proof}[Proof of Lemma~\ref{lem:right_side_cancel}]
	Viewing the resolvent difference as the spectral projection, via the Stone's formula one has
	\[\int^\infty_0 h(\lambda) [\mR_0^+(\lambda^{2m})-\mR_0^-(\lambda^{2m})]v\phi\ \!d\lambda=\int^\infty_0 h(\lambda) \lambda^{k_2-\varepsilon}[\mR_0^+(\lambda^{2m})-\mR_0^-(\lambda^{2m})]|D|^{-k_2+\varepsilon}v\phi\ \!d\lambda.\]
	See also Lemma~3.1 in \cite{AY2dhi}.
	
	By Lemma~\ref{lem:eigen_decay}, we have $|v\phi(y)|\les \la y\ra^{2m-n-\beta}$.  
	Therefore,  
	$$  \varphi(x) = \  \int_{\R^n}\frac{v\phi(y)}{|x-y|^{n-k_2+\varepsilon}} dy $$ is a bounded function of $x$ since $n-k_2+\varepsilon+n+\beta-2m>n$, which suffices when $|x|\lesssim 1$.

	When $|x|\gtrsim 1$, $|y|\ll |x|$, we have the expansion
	\begin{align}\label{eqn:norm_diff_exp}
		\frac{|x|^{\alpha}}{|x-y|^{\alpha}}= & \left(1+\frac{|y|^2}{|x|^2}-\frac{2x\cdot y}{|x|^2}\right)^{-\frac{\alpha}2} =   \sum_{j=0}^k c_j  \left(\frac{|y|^2}{|x|^2}-\frac{2x\cdot y}{|x|^2}\right)^j+O(|y|^{k+1}|x|^{-k-1}) \\
		=&\sum_{\substack{2j_1+j_2\leq k\\j_1,j_2\geq 0}}c_{j_1,j_2}\frac{|y|^{2j_1}(x\cdot y)^{j_2}}{|x|^{2j_1+2j_2}}+\sum_{\substack{k<2j_1+j_2\\j_1,j_2\geq 0, j_1+j_2\leq k}}c_{j_1,j_2}\frac{|y|^{2j_1}(x\cdot y)^{j_2}}{|x|^{2j_1+2j_2}}+O(|y|^{k+1}|x|^{-k-1}).\nonumber\\
		=&\sum_{\substack{2j_1+j_2\leq k\\j_1,j_2\geq 0}}c_{j_1,j_2}\frac{|y|^{2j_1}(x\cdot y)^{j_2}}{|x|^{2j_1+2j_2}} +O(|y|^{k+1}|x|^{-k-1})=:Q_k(x,y)+O(|y|^{k+1}|x|^{-k-1}).\nonumber
	\end{align}
	Note that $Q_k(x,y)$ is a polynomial in $y$ of degree $\leq k$ and $|Q_k(x,y)|\les |y|^k|x|^{-k}\les  |y|^{k+1}|x|^{-k-1}$ when $|y|\gtrsim |x|$. Therefore, for all $|x|\gtrsim 1$ and $y\in \R^n$, we have (with $P_k(x,y):=|x|^{-\alpha}Q_k(x,y)$)
	$$
	\Big|\frac{1}{|x-y|^{\alpha}}-P_k(x,y)\Big|\les \frac{|y|^{k+1}}{|x|^{\alpha+k+1}}+\frac{\chi_{|y|\gtrsim |x|}}{|x-y|^{\alpha}}.
	$$
	Using this with $k=k_2-1$ and $\alpha=n-k_2+\varepsilon$, we have  for $|x|\gtrsim 1$
	\begin{multline*}
		|\varphi(x)|=\  \Big|\int_{\R^n}\frac{v\phi(y)}{|x-y|^{n-k_2+\varepsilon}}\ \!dy\Big| 
		=\  \Big|\int_{\R^n}\left(\frac{1}{|x-y|^{n-k_2+\varepsilon}}- P_{k_2-1}(x,y) \right)v\phi(y)\ \!dy\Big|\\
		\lesssim\  \int_{\R^n}\Big( \frac{|y|^{k_2}}{|x|^{n+\varepsilon} } + \frac{\chi_{|y|\gtrsim |x|}}{|x-y|^{n-k_2+\varepsilon}}\Big) \gen{y}^{-\beta+2m-n}\ \!dy \\
		\les \la x\ra^{-n-\varepsilon} \int_{\R^{n}}\bigg( \la y\ra^{-\beta+2m-n+k_2}+\frac{\la y\ra^{2m-\beta-\varepsilon}}{|x-y|^{n-k_2+\varepsilon}}\bigg)\, dy
		\lesssim\  |x|^{-n-\varepsilon},
	\end{multline*}
	provided that that $\beta>2m+k_2$ and $n\geq k_2$. 
\end{proof}

The lemma above will suffice to control the integral kernel \eqref{Kdef_eigen_2} in Proposition~\ref{Kdef_eigen_2} in the cases when  $|x| < 1$ and when $1< |x| \les |z_1|$ without a need to utilize the orthogonality relations for $\phi_1\in S_{k_1+1}$ on the left. For the case $1< |x|$ and $|z_1|\ll |x|$, we need to utilize the orthogonality of the eigenspace also on the left hand side, which is encapsulated in the following lemma:
\begin{lemma}\label{lem:left_side_cancel}
	If $\phi\in S_{k_1+1}L^2$ for some $k_1\geq 1$, then 
	\begin{multline*}\int_{\R^n}\chi_{|z_1|\ll |x|} \mR_0^+(\lambda^{2m})(r_1)v\phi(z_1)\ \!dz_1=
		\sum_{j=0}^{k_1-1}\int_{\R^n} e^{i\lambda |x|} \frac{\chi_{|z_1|\gtrsim  |x|}  v\phi(z_1) |z_1|^j}{|x|^{n-2m+j}}  \omega^{\frac{n+1}{2}-2m+j}(\lambda,|x|)  \ \!dz_1
		\\+\int^1_0\frac{(1-s)^{k_1-1}}{(k_1-1)!}\int_{\R^n}e^{i\lambda r_1}\frac{\chi_{|z_1|\ll |x|} v\phi(z_1)  |z_1|^{k_1} }{r_1^{n-2m+k_1}} \omega^{\frac{n+1}{2}-2m+k_1}(\lambda,r_1) \ \!dz_1ds
	\end{multline*}
	where $r_1=|x-sz_1|$.
\end{lemma}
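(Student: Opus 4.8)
The plan is to Taylor-expand the free resolvent kernel along the segment from $x$ to $x-z_1$: on the support of $\chi_{|z_1|\ll|x|}$ this segment stays in the region where $|w|\asymp|x|$, so the kernel is smooth there and the expansion costs only powers of $|z_1|/|x|$. Fix such a $z_1$, write $\rho(s):=|x-sz_1|$ and $f(s):=\mR_0^+(\lambda^{2m})(\rho(s))=\widetilde G(x-sz_1)$, where $\widetilde G(w):=\mR_0^+(\lambda^{2m})(|w|)$ (with $\lambda$ a suppressed parameter); since $\rho(s)\ge|x|-|z_1|\gtrsim|x|>0$ for $s\in[0,1]$, the function $f$ is smooth near $[0,1]$, and Taylor's theorem with integral remainder at order $k_1$ gives
\[
\mR_0^+(\lambda^{2m})(|x-z_1|)=f(1)=\sum_{j=0}^{k_1-1}\frac{f^{(j)}(0)}{j!}+\int_0^1\frac{(1-s)^{k_1-1}}{(k_1-1)!}\,f^{(k_1)}(s)\,ds .
\]
By the chain rule $f^{(j)}(s)=(-1)^j(z_1\cdot\nabla_w)^j\widetilde G(x-sz_1)=(-1)^j\sum_{|\alpha|=j}\binom{j}{\alpha}z_1^\alpha(\partial^\alpha_w\widetilde G)(x-sz_1)$, so each $f^{(j)}(0)$ is, viewed as a function of $z_1$, a homogeneous polynomial of degree $j$ whose coefficients depend only on $x$ and $\lambda$. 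I would then multiply this identity by $\chi_{|z_1|\ll|x|}\,v\phi(z_1)$ and integrate $dz_1$ over $\R^n$.

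The computational input is the bound
\[
\partial^\alpha_w\widetilde G(w)=e^{i\lambda|w|}\,|w|^{-(n-2m)-|\alpha|}\,\omega^{\frac{n+1}{2}-2m+|\alpha|}(\lambda,|w|),
\]
where the $\omega$-factor may depend, uniformly, on the direction $w/|w|$. I would derive it from $\mR_0^+(\lambda^{2m})(r)=e^{i\lambda r}r^{-(n-2m)}F(\lambda r)$ with $F=\omega^{\frac{n+1}{2}-2m}$ (Lemma~\ref{prop:F}) by applying the product rule to the three radial factors and using: $\partial_r^\ell[F(\lambda r)]=r^{-\ell}\omega^{\frac{n+1}{2}-2m}(\lambda,r)$ (a short induction from the bound on $\partial_\lambda^N F$); $\partial_r^\ell[e^{i\lambda r}]=(i\lambda)^\ell e^{i\lambda r}$ with $\lambda^\ell=r^{-\ell}\omega^\ell(\lambda,r)$; $\partial^\beta_w[|w|^{-(n-2m)}]=\order(|w|^{-(n-2m)-|\beta|})$; and the chain rule identity that $\partial^\alpha_w[g(|w|)]$ is a finite combination of terms $g^{(\ell)}(|w|)\,c_{\alpha,\ell}(w)$ with $\ell\le|\alpha|$ and $|c_{\alpha,\ell}(w)|\lesssim|w|^{\ell-|\alpha|}$. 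Substituting this into the formulas for $f^{(j)}(0)$ and $f^{(k_1)}(s)$, and using $|z_1^\alpha|\le|z_1|^{|\alpha|}$ and $\rho(s)\asymp|x|$, converts $f^{(j)}(0)/j!$ into $e^{i\lambda|x|}\,|z_1|^j\,|x|^{-(n-2m+j)}\,\omega^{\frac{n+1}{2}-2m+j}(\lambda,|x|)$ and $f^{(k_1)}(s)$ into $e^{i\lambda\rho(s)}\,|z_1|^{k_1}\,\rho(s)^{-(n-2m+k_1)}\,\omega^{\frac{n+1}{2}-2m+k_1}(\lambda,\rho(s))$; these are exactly the integrands on the right-hand side of the lemma, with $\rho(s)$ playing the role of $r_1$ in the remainder term.

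It remains to treat the low-order terms. For $0\le j\le k_1-1$, $f^{(j)}(0)$ is, for \emph{every} $z_1\in\R^n$, a polynomial in $z_1$ of degree $j\le k_1-1$ with $(x,\lambda)$-dependent coefficients, and $\phi\in S_{k_1+1}L^2$ means precisely that $\gen{v\phi,z_1^\alpha}=0$ for all $|\alpha|\le k_1-1$; combined with $|v\phi(z_1)|\lesssim\la z_1\ra^{2m-n-\beta}$ (Lemma~\ref{lem:eigen_decay}, $\beta>2m+k_1$), which makes everything absolutely convergent, this gives $\int_{\R^n}f^{(j)}(0)\,v\phi(z_1)\,dz_1=0$, hence
\[
\int_{\R^n}\chi_{|z_1|\ll|x|}\,f^{(j)}(0)\,v\phi(z_1)\,dz_1=-\int_{\R^n}\chi_{|z_1|\gtrsim|x|}\,f^{(j)}(0)\,v\phi(z_1)\,dz_1 ,\qquad \chi_{|z_1|\gtrsim|x|}:=1-\chi_{|z_1|\ll|x|},
\]
and inserting the expression for $f^{(j)}(0)/j!$ (the sign being absorbed into $\omega$) gives the $j$-th summand of the first sum; Fubini in the remainder term, legitimate since $|z_1|^{k_1}v\phi$ is integrable on $\{|z_1|\ll|x|\}$, produces the second term, completing the identity. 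I expect the only genuinely delicate point to be the $\omega$-calculus bookkeeping behind the displayed bound on $\partial^\alpha_w\widetilde G$ — specifically, that each $w$-derivative falling on $e^{i\lambda|w|}$ or on $F(\lambda|w|)$ costs one factor of $|w|^{-1}$ while raising the $\la\lambda|w|\ra$-power by one, so that the notation of Section~\ref{sec:res exp} applies verbatim; the Taylor step and the orthogonality argument are then routine.
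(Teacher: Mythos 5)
Your proposal is correct and follows essentially the same route as the paper: Taylor expansion to order $k_1$ with integral remainder along the segment $x-sz_1$, use of the orthogonality $\gen{v\phi,z_1^\alpha}=0$ for $|\alpha|\leq k_1-1$ to trade the cutoff $\chi_{|z_1|\ll|x|}$ for $-\chi_{|z_1|\gtrsim|x|}$ in the low-order terms, and the bounds of Lemma~\ref{prop:F} recast in the $\omega$-calculus to control $f^{(j)}(0)$ and the remainder. The only (cosmetic) difference is that you organize the derivatives via the multivariate chain rule $(z_1\cdot\nabla_w)^j\widetilde G$, whereas the paper differentiates the radial variable $r_1=|x-sz_1|$ in $s$ directly; the bookkeeping, including the $\lambda$-derivative bounds needed for the $\omega$ notation, is equivalent.
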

We note that the cut-offs are different in the leading $k_1$ terms in the first finite sum, than in the final summand.  
\begin{proof}[Proof of Lemma~\ref{lem:left_side_cancel}]
	Denoting $f(s):=\mR_0^+(\lambda^{2m})(|x-sz_1|)$, utilizing Taylor's formula with  respect to $s$ yields
	\begin{multline}\label{eqn:mR_0expn}
		\mR_0^+(\lambda^{2m})(|x-z_1|)=f(1)\\
		=f(0)+\partial_sf(0)+\ldots+\frac{1}{(k_1-1)!}\partial_s^{k_1-1}f(0)+\int^1_0\frac{(1-s)^{k_1-1}}{(k_1-1)!}\partial_s^{k_1}\mR_0^+(\lambda^{2m})(|x-sz_1|)\ \!ds
	\end{multline}
	
	We observe that $\partial^i_s|x-sz_1|$ is a linear combination of terms of the form
	\begin{align}\label{eqn:partials in s ugly}
		|x-sz_1|^{1-2m_1-2m_2}((x-sz_1)\cdot z_1)^{m_1}|z_1|^{2m_2}
	\end{align}
	where $m_1,m_2\in \mathbb N\cup\{0\}$ with $m_1+2m_2=i$.  Importantly, with $s=0$ we get $|x|^{1-2m_1-2m_2}(x\cdot z_1)^{m_1}|z_1|^{2m_2}$.  This can be shown through a simple induction argument.  Each term is a polynomial in $z_1$ of degree $m_1+2m_2=i$. In addition,
	\be \label{partialis}
	\big|\partial^i_s|x-sz_1|\big|\les |x-sz_1|^{1-i}|z_1|^i
	\ee
	Since $\partial_s f(r_1)=f'(r_1)\partial_sr_1$, each derivative acting on $f$ produces a factor of $\partial_s r_1$ by the Chain Rule, so that the total number of derivatives acting on $r_1$ (counting multiplicity) sums up to $j$.  So that
	$$
	\partial_s^j f(r_1)=\sum_{k=0}^{j} f^{(k)}(r_1)\prod_{i=1}^{I_k}(\partial_s^i r_1)^{m_i}
	$$	
	where $\sum m_i=k$ and $\sum im_i=j$ when summed over the index set $I_k$.  This may be proven by an inductive argument, noting that  if another derivative acts on a $(\partial_s^\ell r_1)^{m_\ell}$, only the indices $m_\ell$ and $m_{\ell+1}$ are affected, changing to $m_{\ell}-1$ and $m_{\ell+1}+1$ respectively.
	Using Lemma~\ref{prop:F} we see that, with $r_1=|x-sz_1|$, $\partial^j_s\mR_0(\lambda^{2m})(r_1)\rvert_{s=0}$ is a linear combination of terms of the form:
	\begin{multline}\label{eqn:mR_0_terms}
		r_1^{2m-n-j_1}\lambda^{j_2}e^{i\lambda r_1}\lambda^{j_3} F^{(j_3)}(\lambda r_1)\prod^{I}_{i=1}(\partial^i_sr_1)^{m_i}\Big\rvert_{s=0}\\
		= |x|^{2m-n-j_1}\lambda^{j_2}e^{i\lambda |x|}\lambda^{j_3}F^{(j_3)}(\lambda |x|)\prod^{I}_{i=1}\Big(\partial^i_s|x-sz_1|\Big\rvert_{s=0}\Big)^{m_i}
	\end{multline}
	with $\sum^I_{i=1}im_i=j$ and $\sum^I_{i=1}m_i=j_1+j_2+j_3$.  Technically, the index set $I$ should change in each occurrence, we leave it as is for simplicity.  The only factors of $z_1$ come from that rightmost product which is a polynomial of degree $\sum^I_{i=1}im_i=j$ in $z_1$, by \eqref{eqn:partials in s ugly} and the discussion following it.  Using this fact with our $k_1-1$ degrees of cancellation, since $\phi\in S_{k+1}$ yields that $\la v\phi, x^{\alpha}\ra=0$ for all $|\alpha|\leq k-1$ by \eqref{defn:Si}, in \eqref{eqn:mR_0expn} implies that  the contribution of the first $k_1-1$ terms to the $z_1$ integral are zero provided that we don't have the cutoff $\chi_{|z_1|\ll|x|}$. Therefore, we can replace the cutoff $\chi_{|z_1|\ll|x|}$ with $-\chi_{|z_1|\gtrsim|x|}$. We have 
	\begin{multline*}
		\int_{\R^n}\chi_{|z_1|\ll |x|} \mR_0^+(\lambda^{2m})(|x-z_1|)v\phi(z_1)\ \!dz_1=-\sum_{j=0}^{k_1-1}\int_{\R^n}\chi_{|z_1|\gtrsim|x|}  \frac{\partial_s^jf(0)}{(k_1-1)!} v\phi(z_1)\ \!dz_1
		\\+\int_{\R^n}\int^1_0\frac{(1-s)^{k_1-1}}{(k_1-1)!}\frac{\partial^{k_1}}{\partial s^{k_1}}\mR_0^+(\lambda^{2m})(|x-sz_1|)v\phi(z_1)\ \!ds dz_1.
	\end{multline*}	
	We rewrite   \eqref{eqn:mR_0_terms} without the evaluation at $s=0$:	 
	$$ r_1^{2m-n-j_1-j_2-j_3} (\lambda r_1)^{j_2+j_3} e^{i\lambda r_1} F^{(j_3)}(\lambda r_1)\prod^{I}_{i=1}(\partial^i_sr_1)^{m_i}$$
	Note that, by \eqref{partialis},
	$$
	r_1^{2m-n-j_1-j_2-j_3} \Big|\prod^{I}_{i=1}(\partial^i_sr_1)^{m_i}\Big|\les r_1^{2m-n-j_1-j_2-j_3}r_1^{\sum (1-i)m_i}|z_1|^{\sum im_i} = r_1^{2m-n-j}  |z_1|^j
	$$
	Therefore, 
	$$
	\partial^{j}_s\mR_0(\lambda^{2m})(r_1) = e^{i\lambda r_1} \frac{|z_1|^j}{r_1^{n-2m+j}}  G_j(\lambda r_1),
	$$
	where  (also using $j_2+j_3\leq j$  and  the bounds in \eqref{Fbounds})
	$$
	|G_j(\lambda r_1)|\les \la \lambda r_1\ra^{\frac{n+1}2-2m+j}. 
	$$
	Strictly speaking 	the factor $\frac{|z_1|^j}{r_1^{n-2m+j}} $ is a function of $z_1$ and $r_1$ satisfying the given bound. 
	
	Evaluating this at $s=0$ yields the terms in the first line of the formula in the assertion of the lemma. 
	The case $j=k_1$ yields the second line.   The $\lambda$ derivatives follow similarly by applying Lemma~\ref{prop:F}.
\end{proof}

The final lemma provides bounds for the resulting $\lambda$-integral after we employ the cancellation lemma above. To streamline the argument for the new singular terms, for $r_1,r_2>0$, we define 
\begin{align}\label{eqn:Lambda def}
	\Lambda_{ j,\eta}(r_1,r_2):=\bigg|\int_0^1 e^{i\lambda (r_1 \pm r_2 )}  \omega^{\eta }(\lambda)   \ \omega^{\frac{n+1}2 -2m+j}(\lambda,  r_1 )\ \omega^{\frac{1-n}2} (\lambda, r_2)  d\lambda \bigg|.
\end{align}
The following lemma on $\Lambda_{ j,\eta}$ will be crucial in the analysis of $K(x,y)$. 
\begin{lemma}\label{lem:Lambda} 
	For $j\in\{0,1,...,k_1\}$ and $\eta>-1$, we have the bounds 
	\begin{enumerate}[i)]
		\item $\Lambda_{ j,\eta}\les 1$ when  $r_1,r_2\les 1$, 
		\item $\Lambda_{ j,\eta}\les \int_0^1  \frac { \lambda^{\eta}   }{  \la\lambda r_2\ra^{ 2m-1-j} \la \lambda (r_1-r_2)\ra^2}     d\lambda$, when   $r_1 \approx r_2\gg 1 $, 
		\item $\Lambda_{ j,\eta}\les r_1^{j-2m-\f12 }+r_1^{-\eta-1}$ when $r_1 \gg \la r_2\ra$, and
		\item$\Lambda_{ j,\eta}\les  r_2^{-\eta-1}+ \la r_1\ra^{-\min(\eta+1, \frac{n}2+2m-j)}  \big(\tfrac{\la r_1\ra }{r_2}\big)^{n+\frac12},$ when $r_2\gg \la r_1\ra $.
	\end{enumerate}
\end{lemma}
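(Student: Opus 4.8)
The plan is to estimate the oscillatory integral $\Lambda_{j,\eta}$ by repeated integration by parts in $\lambda$, carefully exploiting the phase $e^{i\lambda(r_1\pm r_2)}$ against the three slowly-varying factors $\omega^\eta(\lambda)$, $\omega^{\frac{n+1}{2}-2m+j}(\lambda,r_1)$, $\omega^{\frac{1-n}{2}}(\lambda,r_2)$, whose $N$-th $\lambda$-derivatives obey the bounds in \eqref{omegalambda} and its two-variable analogue, namely $|\partial_\lambda^N\omega^\alpha(\lambda,r)|\les \lambda^{-N}\la\lambda r\ra^\alpha$. The four regimes are handled separately according to the sizes of $r_1,r_2$, and in each one the strategy is: first decide whether or not to integrate by parts in $\lambda$ (governed by whether the relevant "frequency" $r_1\pm r_2$, $r_1$, or $r_2$ is large compared to the scale $1$ at which the $\omega$'s change), then integrate the resulting amplitude over $\lambda\in(0,1)$.

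First I would do part (i): when $r_1,r_2\les 1$ the cut-offs give $\la\lambda r_1\ra\approx\la\lambda r_2\ra\approx 1$ on the support, so the integrand is bounded by $\lambda^\eta$, which is integrable on $(0,1)$ since $\eta>-1$; no oscillation is needed. For parts (ii), (iii), (iv) the idea is to split the $\lambda$-integral at a threshold $\lambda\sim 1/(\text{large frequency})$: for $\lambda$ below the threshold, estimate trivially using $\la\lambda r\ra\approx 1$; for $\lambda$ above it, integrate by parts enough times, each integration by parts gaining a factor $(\text{frequency})^{-1}$ at the cost of $\lambda^{-1}$ times a bounded $\la\lambda r\ra$-power (the worst of which is tracked by $\la\lambda r_2\ra^{\frac{1-n}{2}}$ improving with each derivative, or at least not worsening). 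For part (ii), with $r_1\approx r_2\gg 1$, the phase frequency is $r_1-r_2$, and two integrations by parts produce the stated $\la\lambda(r_1-r_2)\ra^{-2}$ while the amplitude contributes $\lambda^\eta\la\lambda r_2\ra^{\frac{n+1}{2}-2m+j}\la\lambda r_2\ra^{\frac{1-n}{2}}=\lambda^\eta\la\lambda r_2\ra^{1-2m+j}$, i.e. $\lambda^\eta\la\lambda r_2\ra^{-(2m-1-j)}$ since $j\le k_1$; keeping this inside the $\lambda$-integral gives exactly the claimed bound. For part (iii), $r_1\gg\la r_2\ra$ means the dominant frequency is $r_1$ (so $r_1\pm r_2\approx r_1$); integrating by parts $\lceil n/2\rceil+1$ times against $e^{i\lambda r_1}$ and using $\la\lambda r_1\ra^{\frac{n+1}{2}-2m+j}\le(\lambda r_1)^{\frac{n+1}{2}-2m+j}$ (the relevant range of this exponent being negative enough) yields the $r_1^{j-2m-\frac12}$ term, while the endpoint/boundary term at the splitting scale $\lambda\sim 1/r_1$ together with integration of $\lambda^\eta$ up to that scale produces the $r_1^{-\eta-1}$ term. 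Part (iv) is the mirror image with $r_2$ now the large frequency: integrating by parts against $e^{i\lambda r_2}$ up to $n+1$ times brings out the factor $(\la r_1\ra/r_2)^{n+\frac12}$ from the $\la\lambda r_1\ra^{\frac{n+1}{2}-2m+j}$ amplitude evaluated near $\lambda\sim 1/r_2$, the $\la r_1\ra^{-\min(\eta+1,\frac n2+2m-j)}$ coming from comparing the two competing powers (one from how high one can profitably push the $\lambda$-power, capped by integrability of $\lambda^\eta$, the other from the decay the $r_1$-amplitude already supplies), and the $r_2^{-\eta-1}$ again from the low-$\lambda$ piece.

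The main obstacle will be part (iv), and to a lesser degree part (iii): one must bookkeep precisely how the two amplitude factors $\la\lambda r_1\ra^{\frac{n+1}{2}-2m+j}$ and $\la\lambda r_2\ra^{\frac{1-n}{2}}$ interact under repeated differentiation, making sure that integration by parts is actually advantageous (i.e. that differentiating the amplitude never produces a net growing factor once the $\lambda^{-1}$ from \eqref{omegalambda} is paid for by the oscillatory gain $1/(\text{frequency})$), and that the boundary terms at the splitting scale are dominated by the claimed bounds. The interplay between $\eta$ (which controls low-$\lambda$ integrability) and $j$ (which sits in the $r_1$-amplitude exponent) is what forces the minimum in (iv). I would organize all four cases around a single computation of $\partial_\lambda^N$ of the product amplitude, then specialize. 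One should also double-check that $j\le k_1\le 2m$ (or whatever range is in force) makes the exponent $2m-1-j$ in (ii) nonnegative so that $\la\lambda r_2\ra^{-(2m-1-j)}$ is genuinely a decay factor; if $j$ could exceed $2m-1$ the bound in (ii) would need to be read simply as written with a possibly growing integrand, still integrable because of the $\la\lambda(r_1-r_2)\ra^{-2}$ and the finite range of $\lambda$.
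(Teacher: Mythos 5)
Your proposal follows essentially the same route as the paper's proof: the trivial estimate for (i), and for (ii)--(iv) a split of the $\lambda$-integral at the relevant oscillation scale followed by integration by parts against the dominant frequency ($r_1-r_2$ twice for (ii); $r_1$, resp.\ $r_2$, $\lceil \tfrac n2\rceil+1$ times for (iii)--(iv)), with the same amplitude bookkeeping and the same case analysis on the resulting $\lambda$-exponents producing the $r_1^{-\eta-1}$, $r_2^{-\eta-1}$ and minimum terms. One small caution: in (iv) you should integrate by parts exactly $\lceil \tfrac n2\rceil+1$ times rather than ``up to $n+1$'' times, since \eqref{omegalambda} only guarantees that many $\lambda$-derivatives of $\omega^{\eta}(\lambda)$, and this number already suffices because the factor $\omega^{\frac{1-n}2}(\lambda,r_2)$ supplies the extra $r_2^{-\frac{n-1}2}$ needed to reach the decay $(\la r_1\ra/r_2)^{n+\frac12}$.
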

We will apply this with $\eta= \alpha-1+k_2-\varepsilon+n-2m > -1 $, which is independent of $j$.  Also, although the bounds depend on $j$, the worst case is $j=k_1$.    

\begin{proof} We only consider the `-' sign.
	The first bound follows by noting that the magnitude of the integrand in \eqref{eqn:Lambda def} is bounded by $ \lambda^{\eta}$, which is integrable. 
	
	For the second bound, we integrate by parts twice when $\lambda |r_1-r_2|\gtrsim1$  and estimate directly when $\lambda |r_1-r_2|\ll1$ to obtain 
	\begin{multline*} 
		\Lambda_{ j,\eta}\les 
		\int_0^1 \lambda^{\eta} \chi(\lambda|r_1-r_2|)  \la\lambda r_2\ra^{1-2m+j}    d\lambda  
		+ \int_0^1 \frac {  \lambda^{\eta-2} \widetilde\chi(\lambda|r_1-r_2|)   \la\lambda r_2\ra^{1-2m+j}}{|r_1-r_2|^2}     d\lambda  \\
		\les  \int_0^1  \frac { \lambda^{\eta}   }{  \la\lambda r_2\ra^{ 2m-1-j} \la \lambda (r_1-r_2)\ra^2}     d\lambda. 
	\end{multline*}
	For the third bound, when $\lambda r_1\les 1$,   we bound the $\omega$ terms by 1,   and estimate the  integral  by 
	$$\int_0^{\frac1{r_1}} \lambda^\eta d\lambda\les r_1^{-\eta-1}
	$$
	When $\lambda r_1\gtrsim 1$, we integrate by parts for   $N=\lceil\frac{n}2\rceil+1$ times.  Noting that--due to the support conditions for $\omega$ terms and $ \widetilde\chi(\lambda r_1)$--there are no boundary terms, we obtain   
	\begin{multline*} 
		\frac1{|r_1- r_2|^{N}}\int_0^1 \Big|\partial_\lambda^{N} 
		\big[\widetilde\chi(\lambda r_1) \omega^{\eta}(\lambda)   \ \omega^{\frac{n+1}2 -2m+j}(\lambda,  r_1 )\ \omega^{\frac{1-n}2} (\lambda, r_2)\big] \Big| d\lambda\\ 	
		\les r_1^{-N}  
		\int_{\frac1{r_1}}^1  \lambda^{\eta-N}  \lambda^{\frac{n+1}2-2m+j}  r_1^{\frac{n+1}2-2m+j}   \la \lambda r_2\ra^{-\frac{n-1}2}  d\lambda\\
		\les r_1^{j-2m-\{\frac{n}{2}\}-\f12 }  
		\int_{\frac1{r_1}}^1  \lambda^{\eta+j-2m-\{\frac{n}{2}\}-\f12 }   
		d\lambda 	 \les   r_1^{j-2m-\f12 }+r_1^{-\eta-1}.
	\end{multline*}
	Here if $\eta+j-2m-\{\frac{n}{2}\}-\f12>-1$ we bound the $\lambda$ integral by one.  If $\eta+j-2m-\{\frac{n}{2}\}-\f12<-1$, we get $ r_1^{-\eta-1}$.
	
	It remains to consider the fourth inequality; when $r_2\gg \la r_1\ra$. When $\lambda r_2\les 1$, the claim follows by bounding the integrand by $\lambda^\eta$ as in the proof of third inequality above. When $\lambda r_2 \gtrsim 1$, we  integrate by parts $N=\lceil\frac{n}2\rceil+1$ times  to obtain 
	\begin{multline*} 
		\frac1{|r_1- r_2|^{N}}\int_0^1 \Big|\partial_\lambda^{N} 
		\big[\widetilde\chi(\lambda r_2) \omega^{\eta}(\lambda)   \ \omega^{\frac{n+1}2 -2m+j}(\lambda,  r_1 )\ \omega^{\frac{1-n}2} (\lambda, r_2)\big] \Big| d\lambda\\ 	
		\les \frac1{r_2^{N+\frac{n-1}2}} 
		\int_{\frac1{r_2}}^1  \lambda^{\eta-N-\frac{n-1}2} \la \lambda r_1\ra^{\frac{n+1}2-2m+j}    d\lambda \\
		\les \frac1{r_2^{N+\frac{n-1}2}} \int_{\frac1{r_2}}^{\frac2{\la r_1\ra}} \lambda^{\eta-N-\frac{n-1}2}\, d\lambda + \big(\tfrac{\la r_1\ra }{r_2}\big)^{N+\frac{n-1}2} \int_{\frac2{\la r_1\ra}}^{1} \lambda^{\eta+1-N-2m+j}\la   r_1\ra^{1-N -2m+j} \, d\lambda \\ 
		\les r_2^{-\eta-1}+\la r_1\ra^{-\eta-1} \big(\tfrac{\la r_1\ra }{r_2}\big)^{N+\frac{n-1}2} +\la r_1\ra^{-\eta-1} \big(\tfrac{\la r_1\ra }{r_2}\big)^{N+\frac{n-1}2}+\la r_1\ra^{1-N-2m+j} \big(\tfrac{\la r_1\ra }{r_2}\big)^{N+\frac{n-1}2}  
		\\ \les  r_2^{-\eta-1}+ \la r_1\ra^{-\min(\eta+1, \frac{n}2+2m-j)}  \big(\tfrac{\la r_1\ra }{r_2}\big)^{n+\frac12}.  
	\end{multline*} 
	The third line follows by considering cases based on the size of the exponents on $\lambda$, and the last inequality  follows by noting that $N+\frac{n-1}2\geq n+\frac12$.
\end{proof}

We are now ready to prove the main result of this section.
\begin{proof}[Proof of Proposition~\ref{prop:low tail eigenspace}]
	We start with the case $k_1>0$.

	Applying  Lemma~\ref{lem:right_side_cancel} in \eqref{Kdef_eigen_2} gives (with $\varphi_2(\cdot)=O(\la \cdot\ra^{-n-\varepsilon}$) 
	\begin{multline}\label{eqn:K improved}
		K(x,y)\\
		=\int_{\R^{2n}}\int^1_0 \omega^{\alpha-1}(\lambda)  \lambda^{k_2-\varepsilon}\mR_0^+(\lambda^{2m})(|x-z_1|)v\phi_1(z_1) \varphi(z_2)[\mR_0^+(\lambda^{2m})-\mR_0^-(\lambda^{2m})](r_2)\ \!d\lambda dz_1dz_2.
	\end{multline}
	The quantity $\varepsilon>0$ above will be chosen to be sufficiently small depending on the fixed index $p$ and the  values of other parameters in the proposition. In the case $k_2=0$, we take $\varepsilon=0$ and the equality holds with $\varphi=v\phi$, which is in $L^1$ provided that $\beta>\frac{n}2$; we will omit this issue in the rest of the proof.

	To study $K(x,y)$, \eqref{eqn:K improved}, we consider the regions  
	$$A:=\{(x,z_1): \la z_1\ra \gtrsim \la  x \ra  \}\,\,\text{ and }
	A^c=\{(x,z_1):  |x|\gg 1 ,  |z_1|  \ll  |x|\}.$$
	Note that on the set $A$ we have  $\la z_1 \ra \gtrsim \la x\ra,  \la r_1\ra$, and all restriction on $L^p$ boundedness comes from the integrability in $y$ variable (provided that $\beta$ is sufficiently large). On the set $A^c$ we will also utilize Lemma~\ref{lem:left_side_cancel} to gain additional powers of $|x|$ and/or $r_1$.

	To that end, we define $K_0(x,y)$ to be $K(x,y)$ restricted to the set $A$.  On the set $A^c$, we utilize   Lemma~\ref{lem:left_side_cancel} to replace the resolvent on the left with the sum of $k_1$ terms.  That is, we write
	$$
	K(x,y) =\sum_{j=0}^{k_1}K_{ j}(x,y).
	$$
	Now, using the representations in Lemma~\ref{prop:F} (with $r_1=|x-z_1|$ and $r_2:=|z_2-y|$) in  \eqref{eqn:K improved}, we write  $K_0 $ as the difference of  
	\begin{multline} \label{K0defini}
		K_{0,\pm}(x,y)= \\ \int_{\R^{2n}}  \frac{\chi_{\la z_1\ra \gtrsim \la  x \ra} v\phi_1(z_1)\varphi(z_2) }{r_1^{n-2m}   } \int_0^1 e^{i\lambda (r_1 \pm r_2 )}  \omega^{\alpha-1+k_2-\varepsilon+n-2m }(\lambda)   \ \omega^{\frac{n+1}2 -2m}(\lambda,  r_1 )\ \omega^{\frac{1-n}2} (\lambda, r_2)  d\lambda dz_1 dz_2.
	\end{multline}
	Using the bound in Lemma~\ref{lem:eigen_decay} and  $\la z_1\ra \gtrsim \la  x \ra$ we have $$|v(z_1)\phi(z_1)|\les \gen{z_1}^{-\frac\beta2+2m-n-\frac\beta2}=\gen{z_1}^{2m-n-\beta } \les \frac{1}{ \gen{z_1}^{n+} \la r_1\ra^{\beta-2m-}} .$$
	Using this, the bound $|\varphi(z_2)|\les \gen{z_2}^{-n-}$, and the definition of $\Lambda_{j,\eta}$ in \eqref{eqn:Lambda def} with 
	\be\label{eta}
	\eta= \alpha-1+k_2-\varepsilon+n-2m,
	\ee we have 
	$$
	|K_{0,\pm}(x,y)| 
	\les \int_{\R^{2n}} \frac{dz_1 dz_2}{ \gen{z_1}^{n+}\gen{z_2}^{n+}  }\frac{ \Lambda_{0,\eta}(r_1,r_2)}{ r_1^{n-2m}\la r_1\ra^{\beta-2m-}  } .
	$$
	Similarly for $1\leq j<k_1$, using Lemma~\ref{lem:left_side_cancel} on $z_1$ integral on $A^c$, $K_{j}$ is  the difference  of 
	\begin{multline} \label{Kjdefini}
		|K_{j,\pm}(x,y)|
		= \chi_{|x|\gg 1}\Big| \int_{\R^{2n}}\frac{\chi_{|z_1|\gtrsim |x|}|z_1|^j v\phi_1(z_1)\varphi(z_2)}{|x|^{n-2m+j}}\\ \int^1_0 e^{i\lambda (|x|\pm r_2)} \omega^{\alpha-1+k_2-\varepsilon+n-2m }(\lambda)\ \omega^{\frac{n+1}{2}-2m+j}(\lambda,|x|)    \ \omega^{\frac{1-n}2} (\lambda, r_2)  \ \!d\lambda dz_1dz_2\Big| \\
		\les \chi_{|x|\gg 1} \int_{\R^{2n}}\frac{\chi_{|z_1|\gtrsim |x|}  \gen{z_1}^{-n-}\gen{z_2}^{-n-} }{|x|^{n-2m+j} |x|^{\beta-2m-j-}} \Lambda_{j,\eta}(|x|,r_2) dz_1 dz_2 \\ \les \chi_{|x|\gg 1}\int_{\R^{2n}} \frac{dz_1 dz_2}{ \gen{z_1}^{n+}\gen{z_2}^{n+}  }\frac{ \Lambda_{j,\eta}(|x|,r_2)}{ |x|^{n-4m+\beta-}  },
	\end{multline}
	and $K_{ k_1}$  is  the difference  of (with $r_1=|x-sz_1|\approx |x|\gg 1$ and using $\beta>2m+k_1$)
	\begin{multline}\label{Kk1defini}
		|K_{k_1,\pm }(x,y)| =\chi_{|x|\gg 1} \Big|\int_{\R^{2n}}\frac{\chi_{|z_1|\ll |x|}|z_1|^{k_1} v\phi_1(z_1)\varphi(z_2)}{r_1^{n-2m+k_1}}\int^1_0\frac{(1-s)^{k_1-1}}{(k_1-1)!} \\
		\int^1_0 e^{i\lambda (r_1\pm r_2)} \omega^{\alpha-1+k_2-\varepsilon+n-2m }(\lambda)\ \omega^{\frac{n+1}{2}-2m+k_1}(\lambda,r_1)    \ \omega^{\frac{1-n}2} (\lambda, r_2)  \ \!d\lambda ds dz_1dz_2\Big|\\
		\les  \int_{\R^{2n}}\frac{\chi_{r_1\gg 1}  \gen{z_1}^{-n-}\gen{z_2}^{-n-}}{r_1^{n-2m+k_1} }\int^1_0  \Lambda_{k_1,\eta}(r_1,r_2)  ds dz_1 dz_2.
	\end{multline}
	Using the bounds $\beta>2m+k_1$ and $j<k_1$, and noting that the worst case in Lemma~\ref{lem:Lambda} is when $j=k_1$, the required bounds for $K_{j,\pm}$ follows from the bounds for $K_{k_1,\pm }$, replacing $r_1\gg1$ with $|x|\gg 1$ in the argument. 
	
	Also note that the contribution of the first bound in Lemma~\ref{lem:Lambda} is relevant only for $K_{0,\pm}$, and it is straight-forward to verify that the contribution to $K_{0,\pm}$ is admissible.\footnote{We say an operator $K$ with integral kernel $K(x,y)$ is admissible if
		$$
		\sup_{x\in \R^n} \int_{\R^n} |K(x,y)|\, dy+	\sup_{y\in \R^n} \int_{\R^n} |K(x,y)|\, dx<\infty.
		$$
		By the Schur test, it follows that an operator with admissible kernel is bounded on $L^p(\R^n)$ for all $1\leq p\leq \infty$.}  
	As for $j\in\{1,...,k-1\}$, the rest of the proof for $K_{0,\pm}$ follows from the bounds for $K_{k_1,\pm }$.

	Before we start, we note that
	by taking $\varepsilon<\min(k_2+\alpha, \alpha +k_1+k_2 +n-4m)$ (see  \eqref{k1k2alpha}) and using   \eqref{eta}, we have
	\be\label{etabound}
	\eta >0, \,\, \text{ and } \,\,  \eta+k_1-2m=   \alpha-1+k_1+k_2-\varepsilon+n-4m>-1.  
	\ee 
	
	We denote the contributions of second, third, and fourth bounds in Lemma~\ref{lem:Lambda} to $K_{k_1,\pm}$ by $K_{k_1,2}$, $K_{k_1,3}$, and  $K_{k_1,4}$, respectively.  We will ignore the integral in $s$ as the bounds will be uniform in $s$.  
	We have 
	$$
	\|K_{k_1,2}(x,y)\|_{L^1_y} \les  \int_{\R^{2n}} \frac{1 }{ \gen{z_1}^{n+}\gen{z_2}^{n+}   }  \int_0^1 \frac { \lambda^{\eta}    }{r_1^{n-2m+k_1}  \la\lambda r_1\ra^{ 2m-1-k_1} }   \Big\|\frac { \chi_{ r_2\approx r_1 \gg 1}   }{   \la \lambda (r_1-r_2)\ra^2}   \Big\|_{L^1_y}  d\lambda dz_1 dz_2.
	$$
	Passing to polar coordinates, we have the $L^1$ norm above is bounded by $r_1^{n-1}\lambda^{-1}$ which leads to the bound
	$$\les  \int_{\R^{2n}} \frac{1}{ \gen{z_1}^{n+}\gen{z_2}^{n+}   }  \int_0^1   \frac {\lambda^{\eta-1}   \chi_{r_1 \gg 1}   }{ r_1^{1-2m+k_1 }  \la\lambda r_1\ra^{ 2m-1-k_1}  } d\lambda dz_1 dz_2
	$$
	For $k_1\leq 2m-1$, we estimate this by 
	$$\les  \int_{\R^{2n}} \frac{1}{ \gen{z_1}^{n+}\gen{z_2}^{n+}   }  \int_0^1      \lambda^{\eta+k_1-2m}       d\lambda dz_1 dz_2\les 1.
	$$
	In the last inequality we used \eqref{etabound}.

	For $k_1\geq 2m$, we have the bound 
	$$
	\int_0^1 \frac{\lambda^{\eta-1} \chi_{r_1\gg1}}{r_1^{1-2m+k_1}   }  d\lambda  + \int_0^1  \lambda^{\eta+k_1-2m}   d\lambda \les 1.
	$$
	The last inequality follows from \eqref{etabound}.
	The $L^1_x$ norm is bounded similarly, and hence $K_{k_1,2}$ is admissible.

	We now consider $K_{k_1,3}$:
	$$
	|K_{k_1,3}(x,y)|\les  \int_{\R^{2n}} \frac{dz_1 dz_2}{ \gen{z_1}^{n+}\gen{z_2}^{n+}  }\frac{(r_1^{-\eta-1}+r_1^{k_1-2m-\f12 })\chi_{r_1\gg\la r_2\ra}}{r_1^{n-2m+k_1}  }.
	$$ 
	This bound yields an admissible kernel as  $\eta+k_1-2m+1>0$ by \eqref{etabound}. 
	
	It remains to consider $K_{k_1,4}$.  
	$$
	|K_{k_1,4}(x,y)|\les  \int_{\R^{2n}} \frac{ dz_1 dz_2}{ \gen{z_1}^{n+}\gen{z_2}^{n+}  }\frac{\chi_{r_2\gg r_1\gg 1}}{r_1^{n-2m+k_1}  } \big(r_2^{-\eta-1}+ r_1^{-\min(\eta+1, \frac{n}2+2m-k_1)}  \big(\tfrac{ r_1  }{r_2}\big)^{n+\frac12}\big) .
	$$ 
	Note that the contribution of the second term in the parenthesis is admissible as  
	$$n-2m+k_1+ \min(\eta+1, \frac{n}2+2m-k_1) > n, $$
	which follows from \eqref{etabound}. 
	
	The contribution of the first term is bounded in $L^p$ for $p<\frac{n}{n-\eta-1}=\frac{n}{2m-k_2-\alpha+\varepsilon}$ provided that 
	$$
	\eta+1+n-2m+k_1>n,
	$$
	which follows from \eqref{etabound}. For $k_2+\alpha\leq 2m$, this yields the claim by taking $\varepsilon$ sufficiently small. For $k_2+\alpha>2m$, we have an admissible kernel as $\eta+1>n$ in that case.
	
	It remains to consider  the case $k_1=0$ when $2m<n<4m$. 
	Note that, by taking $\varepsilon <\alpha+k_2-(4m-n)$, we have 
	$$
	\eta+1=\alpha+k_2 -\varepsilon+n-2m>2m.
	$$
	We have (see \eqref{eqn:K improved} and the discussion below it), with $\varphi_1(z_1)=|v(z_1)\phi_1(z_1)|\in L^1\cap L^2$ and  $\varphi_2(z_2)=|v(z_2)\phi_2(z_2)|\in L^1$ if $k_2=0$ and $\gen{z_2}^{-n-}$ otherwise,  
	$$
	|K(x,y)|\les \int_{\R^{2n}} \frac{  \varphi_1(z_1) \varphi_2(z_2)    }{r_1^{n-2m}   } \Lambda_{0,\eta}(r_1,r_2) dz_1 dz_2,
	$$
	provided that $\beta>n$.
	Using Lemma~\ref{lem:Lambda} and $\eta+1>2m$, we have 
	\begin{enumerate}[i)]
		\item $\Lambda_{ 0,\eta}\les 1$ when  $r_1,r_2\les 1$, 
		\item $\Lambda_{ 0,\eta}\les \int_0^1  \frac { \lambda^{\eta}   }{  \la\lambda r_2\ra^{ 2m-1 } \la \lambda (r_1-r_2)\ra^2}     d\lambda$, when   $r_1 \approx r_2\gg 1 $, 
		\item $\Lambda_{0,\eta}\les r_1^{-2m- }$ when $r_1 \gg \la r_2\ra$, and
		\item$\Lambda_{0,\eta}\les  r_2^{-\eta-1}+ \la r_1\ra^{-2m-}  \big(\tfrac{\la r_1\ra }{r_2}\big)^{n+\frac12},$ when $r_2\gg \la r_1\ra $.
	\end{enumerate}
	In case i), we have an admissible kernel as $\frac1{r_1^{n-2m}}$ is locally $L^2$ and hence $\varphi_1(z_1)r_1^{2m-n}$ is  $L^1$. 
	
	In case ii), as above, we have the following bound for the $L^1_y$ norm    
	$$
	\les  \int_{\R^{2n}}\varphi_1(z_1) \varphi_2(z_2)        \int_0^1  \frac { \lambda^{\eta-1}    \chi_{r_1\gg1}  }{   r_1^{1-2m}   \la\lambda r_1\ra^{ 2m-1}  }     d\lambda  dz_1 dz_2 \les \int_{\R^{2n}}\varphi_1(z_1) \varphi_2(z_2)        \int_0^1   \lambda^{\eta-2m}          d\lambda  dz_1 dz_2 \les 1, $$ 
	as $\eta-2m>-1$.  
	
	The case iii) is easily seen to be admissible, and the case iv) yields the same range of $p$ as above since $n-2m+\eta+1>n$.
\end{proof}

\section{Expansions for the resolvent and $M(\lambda)$}\label{sec:Minv}
In this section we develop an expansion of $M^\pm(\lambda)^{-1}$ for small $\lambda\in(0,\lambda_0)$ when there is a threshold eigenvalue.  Throughout this section we consider the `+' limiting operators and omit the superscript: $M(\lambda)=U+v\mR_0(\lambda^{2m})v$.
We utilize the Jensen-Nenciu inversion scheme, \cite{JN}, to understand $M^{-1}(\lambda)$ when there is a threshold obstruction. In contrast to the analysis for dispersive estimates, we seek to avoid lengthy expansions for the operators and their derivatives.

Recall that $T_0=U+v\mR_0(0)v=U+vG_0^0v$, where $G_0$ is the operator with integral kernel $G_0^0(x,y):=a_0|x-y|^{2m-n}$, and $S_1$ denotes the Riesz projection onto the kernel of $T_0$. Also recall the definitions  $S_{i+1}$ from \eqref{defn:Si}.  In the case $n>4m$, there are no resonances, and having an eigenvalue at zero means $T_0$ is not invertible, however, $T_1=S_1vG_1^0vS_1$ is invertible on $S_1L^2$.  Here the kernel of $G_1^0$ is a constant multiple of $|x-y|^{4m-n}$, see \cite{soffernew,EGWaveOp,egl} for further details. 

In the case $2m<n\leq 4m$, with $k=2m-\lceil\frac{n}{2}\rceil+1$, having only an eigenvalue at zero (no resonances) means $T_0$ is not invertible, however, $S_1=S_{k+1}$ and $T_{k+1}:= S_{ 1}vG^0_{1,L}vS_{ 1}$ is invertible on $S_1L^2$, where $G_{1,L}^0(x,y):= |x-y|^{4m-n}(b_{1}+b_{2}\log(|x-y|))$ ($b_2=0$ in odd dimensions). See  \cite{soffernew} for more details. 
We also define the operators with integral kernels
$$
G_j^1(x,y):= c_j|x-y|^{2j},\quad j=0,1,2,... 
$$
which will be important in the expansions for $M(\lambda)$. 	 
Finally, note that the operator $T_0+S_1$ is invertible in all cases and we denote its inverse  by $D_0$. We omit the proof of the standard fact that $D_0$ is absolutely bounded, and that $D_0S_1=S_1D_0=S_1$, see \cite{egl} for more details.

We introduce some notation to help streamline the upcoming statements and proofs.  
For an absolutely bounded operator $T(\lambda)$ on $L^2(\R^n)$ we write $T(\lambda)=O_N(\lambda^j)$ to mean that for $0\leq\ell\leq N$,
$$
\sup_{0<\lambda<\lambda_0}\lambda^{\ell-j}|\partial_\lambda^\ell T(\lambda)|
$$
is a bounded operator on $L^2$.

We note that, if $S,T$ satisfy $S(\lambda)=O_{N_1} (\lambda^{j})$ and $T(\lambda)=O_{N_2} (\lambda^{k})$, by the product rule the composition of the operators   is absolutely bounded and satisfies
$$
S(\lambda)T(\lambda)=O_{\min(N_1,N_2)} (\lambda^{j+k}).
$$
Similarly,
$$
\lambda^j T(\lambda)=O_{N_2}(\lambda^{j+k}).
$$
If $R$ is an absolutely bounded operator on $L^2$, then 
$$
S(\lambda)R,\, R S(\lambda)=O_{N_1}(\lambda^{j}).
$$
By Lemma~3.2 in \cite{egl}, we also have 
\be\label{ONinv}
[\Gamma+O_N(\lambda^{\epsilon})]^{-1}= \Gamma^{-1}+O_N(\lambda^{\epsilon}),
\ee
provided that $\Gamma$ is an invertible  $\lambda$-independent operator and $\epsilon>0$, and that $\lambda_0$ is sufficiently small. 

\begin{defin}
	Let $T(\lambda)$ be a $\lambda $ dependent absolutely bounded operator.  We say $T$ is an admissible error, and write $T=\mathcal{AE}$ if $T(\lambda)=O_0(\lambda^0)$ and $\partial_\lambda T(\lambda)=O_{\lceil \tfrac{n}2\rceil}(\lambda^0)$.
\end{defin} 
We note that $T=\mathcal{AE}$ is equivalent to the $L^2$-boundedness of the operator with kernel   
$$
\widetilde T (x,y):= \sup_{0<\lambda <\lambda_0}\Big[|T(\lambda)(x,y)|+ \sup_{1\leq \ell\leq  \lceil \tfrac{n}2\rceil +1  } \big|\lambda^{\ell-1} \partial_\lambda^\ell T(\lambda)(x,y)\big| \Big]. 
$$
Also note that $T=\mathcal{AE}$ under the more restrictive assumption $T=O_{ \lceil \tfrac{n}2\rceil +1}(\lambda)$.

Note that if $S,T=\mathcal{AE}$, then their composition $ST =\mathcal{AE}$. Finally,   if $T=\mathcal{AE}$ is invertible for each  $\lambda\in (0,\lambda_0)$ and if 
$ T^{-1}=O_0(\lambda^0) $, then $T^{-1}=\mathcal{AE}$.
To see this note that  
$$
\partial_\lambda T^{-1}(\lambda)=T^{-1}(\lambda)[\partial_\lambda T(\lambda)]T^{-1}(\lambda),
$$
so $\partial_\lambda^kT^{-1}(\lambda)$
is a linear combination of operators of the form:
$$
T^{-1}(\lambda)\prod_{k_j\geq 1, \sum k_j=k}\bigg([\partial_\lambda^{k_j}T(\lambda)]T^{-1}(\lambda)\bigg).
$$

In our usage, $T(\lambda)$ is an operator defined in terms of the free resolvent $\mR_0(\lambda^{2m})$, for which we prove various bounds.
This helps to minimize the required decay on $V$ and avoid lengthier expansions and streamline our analysis.

We recall the following from \cite{EGWaveOpExt}, see equation (14) and the bounds following it:
$$v \mathcal R_0(\lambda^{2m}) v =\mathcal{AE},$$
provided that $\beta>n_\star$. This implies that $M(\lambda)=U+v\mR_0(\lambda^{2m})v$ and $M(\lambda)+S_1$ are $\mathcal{AE}$ under the same condition.

We also recall Corollary~2.2 in \cite{JN} which tells us that 
\be\label{eqn:JN Minv id}
M^{-1}(\lambda)=[M (\lambda)+S_1]^{-1} - \lambda^{-2m}[M (\lambda)+S_1]^{-1} S_1 B^{-1}(\lambda)S_1[M (\lambda)+S_1]^{-1} ,
\ee
provided that 
\be\label{Blamdef}
B(\lambda)=\frac{1}{\lambda^{2m}}[S_1(M(\lambda)+S_1)^{-1}S_1 -S_1] 
\ee
is invertible on $S_1L^2$. 
By the assumption that there is a zero energy eigenvalue, $T_0$ is not invertible but $T_0+S_1$ is invertible on $L^2$.   Note that by a Neumann series expansion  and  the invertibility of $T_0+S_1$, the bound $(M(\lambda)+S_1)^{-1}=O_0(\lambda^0)$ follows from   the expansion
$$M(\lambda)+S_1=M(0)+S_1+[M(\lambda)-M(0)]=(T_0+S_1)+\int_0^\lambda \partial_s M(s) ds = T_0+S_1+ O_0(\lambda^1).$$
In the last inequality we used that $M(\lambda)=\mathcal{AE}$.

This implies that $[M(\lambda)+S_1]^{-1}=\mathcal{AE}$. The second summand in \eqref{eqn:JN Minv id} is more complicated and we first  consider the case $n>4m$. 

Let $M_1(\lambda)= \frac{(M(\lambda)+S_1)^{-1} -D_0}{\lambda^{2m}}$. Note that, using  $D_0S_1=S_1D_0=S_1$, we have $B(\lambda)=S_1M_1(\lambda)S_1$. We recall the following more detailed expansion for $(M(\lambda)+S_1)^{-1}$ from  \cite[Lemma 3.4]{egl}:\footnote{  In \cite{egl}, this lemma  was stated with   an error $\lambda^{2m+\epsilon}$, $0<\epsilon<1$.  However, the proof also yields the case $\epsilon =1$ as the logarithm in even dimensions doesn't appear in the first term of the error. This requires $\beta>n_\star+2$.} 
\be\label{eqn:M exp n big}
M(\lambda)=T_0+\lambda^{2m}vG_1^0v+O_{\lceil\frac{n}2\rceil+1}(\lambda^{2m+1}),
\ee
provided $\beta>n_\star+2$. 
So that,
\be\label{MS1inv}
(M(\lambda)+S_1)^{-1}=D_0-\lambda^{2m}D_0vG_1^0vD_0+O_{\lceil\frac{n}2\rceil+1}(\lambda^{2m+1}).
\ee
Therefore, we have $M_1=\mathcal{AE}$, and hence $B=\mathcal{AE}$. 

To invert $B(\lambda)$ on $S_1L^2$, recall that (see \cite{egl})  $T_1=S_1vG_1^0vS_1$ is invertible on $S_1L^2$ (for $n>4m$). Using \eqref{MS1inv}, we write 
\be\label{Bexp}
B(\lambda)= -T_1 + O_{\lceil\frac{n}2\rceil+1}(\lambda^1)
\ee
and conclude that $B^{-1}(\lambda)=O_0(\lambda^0)$ on $S_1L^2$, which implies that $B^{-1}(\lambda)=\mathcal{AE}$.
With this we may write the second summand in \eqref{eqn:JN Minv id} as 
$$
M_1(\lambda) S_1B^{-1}(\lambda) S_1 [M(\lambda)+S_1]^{-1}+ S_1B^{-1}(\lambda)S_1 M_1(\lambda)+ \frac{1}{\lambda^{2m}} S_1B^{-1}(\lambda)S_1.
$$
Since $M_1(\lambda), (M(\lambda)+S_1)^{-1}, S_1B^{-1}(\lambda)S_1$ are $\mathcal{AE}$, we conclude that the first two summands above are $\mathcal{AE}$,  and we can write  
\be \label{MinvAE}
M^{-1}(\lambda)=\mathcal{AE}  -\frac1{\lambda^{2m}}S_1B^{-1}(\lambda) S_1.
\ee 
Let $\{\phi_i:i\in\{1,...,d\}\}$ be an orthonormal basis for $S_1L^2$. By \eqref{MinvAE}, \eqref{Bexp}, and \eqref{ONinv}, we have 
$$
M^{-1}(\lambda)(x,y)= \mathcal{AE}+ \frac1{\lambda^{2m}} \sum_{i,j=1}^d
\phi_i(x)    \overline{\phi_j}(y) \omega^0(\lambda),
$$
where the operator in the  second term is of form $(k_0,k_0,0)$ when $S_1=S_{k_0+1}$. This yields the following proposition in the case $n>4m$: 
\begin{prop}\label{Minvsing} Let $n>2m$.
	Assume that $S_1=S_{k_0+1}$ for some $\max(k,1)\leq k_0\leq 2m+1$, then
	$$
	M^{-1}(\lambda)=M_e(\lambda)+\lambda^{-2m}\widetilde M(\lambda),
	$$
	where $M_e(\lambda)=\mathcal{AE}$ and
	$\widetilde M(\lambda)$ is a finite linear combination of operators of the form $(k_1,k_2,\alpha)$, where 
	$$
	(k_1,k_2,\alpha)\in\{ (k_0,k_0,0),(0,k_0,k_0),(k_0,0,k_0)\},
	$$ 
	provided that $\beta>\max(8m-n,n_\star)+2$. Moreover, when $n\geq 4m$, $\widetilde M(\lambda)=S_1B^{-1}(\lambda)S_1$, that is $(k_1,k_2,\alpha)=(k_0,k_0,0)$.
	
\end{prop}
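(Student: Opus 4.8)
The case $n>4m$ is settled above, so the plan is to run the same machinery for $2m<n\le 4m$, keeping the Jensen--Nenciu identity \eqref{eqn:JN Minv id}, the Neumann-series fact $[M(\lambda)+S_1]^{-1}=\mathcal{AE}$, and the factorization $B(\lambda)=\lambda^{-2m}[S_1(M(\lambda)+S_1)^{-1}S_1-S_1]$ as the backbone. Three things have to change: one needs a low-energy expansion of $M(\lambda)$ valid in these dimensions; one has to see that the extra orthogonality hypothesis $S_1=S_{k_0+1}$ annihilates all terms of that expansion below order $\lambda^{2m}$ once they are conjugated by $S_1$, which pins down $B(\lambda)$; and for $2m<n<4m$ one has to track the two outer $[M(\lambda)+S_1]^{-1}$ factors in \eqref{eqn:JN Minv id}, which is what produces the cross terms $(0,k_0,k_0)$ and $(k_0,0,k_0)$. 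I expect the first of these to be the main obstacle.

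Starting from $\mathcal R_0(\lambda^{2m})(x,y)=r^{2m-n}e^{i\lambda r}F(\lambda r)$ (Lemma~\ref{prop:F}) and the known analytic structure of $F$ near $0$, cf.\ \cite{soffernew,EGWaveOp,EGWaveOpExt}, I would record
\[
M(\lambda)=T_0+\sum_{0<\gamma_j<2m}\lambda^{\gamma_j}\,v\widetilde G_j v\;+\;c\,\lambda^{2m}\,vG_{1,L}^0 v\;+\;O_{\lceil n/2\rceil+1}(\lambda^{2m+1}),\qquad c\ne 0,
\]
for $\beta>\max(8m-n,n_\star)+2$: the $n_\star+2$ controls the error as in the footnote, the $8m-n$ makes the composed finite-rank operators below bounded on $L^2$ via the eigenfunction decay of Lemma~\ref{lem:eigen_decay}. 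Each intermediate kernel $\widetilde G_j(x,y)$ is either an odd power of $|x-y|$ whose coefficient vanishes identically for these $j$ (when $n$ is odd), or a polynomial in $|x-y|^2$ of degree $\le 2k-2$; the $\lambda^{2m}$ term carries $G_{1,L}^0$, with kernel $|x-y|^{4m-n}$ for odd $n$ and $|x-y|^{4m-n}(b_1+b_2\log|x-y|)$, $b_2\ne0$, for even $n$, and in even dimensions a $\lambda^{2m}\log\lambda$ term with a polynomial (constant when $n=4m$) kernel may also occur. Extracting this organized expansion from Lemma~\ref{prop:F}, with exactly the right intermediate kernels at the right powers of $\lambda$ and with the decay of $V$ tracked so that all the composed finite-rank operators below are $L^2$-bounded, is the step I expect to be the most delicate; it is sharpest in even dimensions, where the logarithms must be shown to enter only at order $\lambda^{2m}$ as $|x-y|^{4m-n}\log|x-y|$ while the accompanying $\lambda^{2m}\log\lambda$ pieces have kernels killed by $S_1$.

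Next the cancellation. Since $S_1=S_{k_0+1}$ one has $\gen{v\phi,x^\alpha}=0$ for $|\alpha|\le k_0-1$ and $\phi\in S_1L^2$, in particular $S_1v=0$. Expanding a polynomial kernel $Q(x-y)$ of degree $d$ in $y$ and using this gives $vQ(x-y)vS_1\phi(x)=v(x)P_\phi(x)$ with $\deg P_\phi\le d-k_0$, so $S_1vQ(x-y)vS_1=0$ whenever $d\le 2k_0-1$; as $2k-2\le 2k_0-2<2k_0-1$, every intermediate term, and the $\lambda^{2m}\log\lambda$ term (via $S_1v=0$ on its constant part), dies after conjugation by $S_1$. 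Feeding this into $B(\lambda)$ and using $D_0S_1=S_1$ gives $B(\lambda)=-c\,T_{k+1}+O_{\lceil n/2\rceil+1}(\lambda)$ on $S_1L^2$ with $T_{k+1}=S_1vG_{1,L}^0vS_1$; since $T_{k+1}$ is invertible on $S_1L^2$ in the eigenvalue-only case \cite{soffernew}, \eqref{ONinv} yields $B^{-1}(\lambda)=-c^{-1}T_{k+1}^{-1}+O_{\lceil n/2\rceil+1}(\lambda)=\mathcal{AE}$, whose leading part $-c^{-1}S_1T_{k+1}^{-1}S_1$ is precisely a $(k_0,k_0,0)$ operator. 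When $n\ge 4m$ there are no intermediate terms, so $(M(\lambda)-T_0)S_1=O(\lambda^{2m})$ and the outer factors add only $\mathcal{AE}$, giving $\widetilde M(\lambda)=S_1B^{-1}(\lambda)S_1$ as claimed.

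For $2m<n<4m$, write $[M(\lambda)+S_1]^{-1}S_1=S_1+R(\lambda)$ and $S_1[M(\lambda)+S_1]^{-1}=S_1+R'(\lambda)$, with $R(\lambda)=-(M(\lambda)+S_1)^{-1}(M(\lambda)-T_0)S_1=R(\lambda)S_{k_0+1}$ and $R'(\lambda)$ its symmetric analogue from the right. By the degree count above each nonzero $v\widetilde G_j vS_1$ is a $\lambda$-independent bounded finite-rank operator $\Gamma_jS_{k_0+1}$ that vanishes unless its polynomial kernel has degree $\ge k_0$, so the smallest surviving order obeys $\gamma_j\ge n-2m+k_0>k_0$ (this is where $n>2m$ enters, and is the crux of the $2m<n<4m$ versus $n\ge4m$ dichotomy). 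Expanding $(M(\lambda)+S_1)^{-1}=D_0+\mathcal{AE}$ on the left gives $R(\lambda)=\sum_j\omega^{\gamma_j}(\lambda)\Gamma_j'S_{k_0+1}+\mathcal{AE}$ with every $\gamma_j>k_0$, hence every $\omega^{\gamma_j}$ an admissible $\omega^{k_0}$ weight, and similarly for $R'(\lambda)$. Substituting into \eqref{eqn:JN Minv id},
\[
M^{-1}(\lambda)=\mathcal{AE}-\lambda^{-2m}\bigl[S_1B^{-1}(\lambda)S_1+R(\lambda)B^{-1}(\lambda)S_1+S_1B^{-1}(\lambda)R'(\lambda)+R(\lambda)B^{-1}(\lambda)R'(\lambda)\bigr];
\]
pairing $R(\lambda)$ with $B^{-1}(\lambda)=-c^{-1}T_{k+1}^{-1}+O(\lambda)$ turns $R(\lambda)B^{-1}(\lambda)S_1$ into a sum of $(0,k_0,k_0)$ operators plus $\mathcal{AE}$, the symmetric computation turns $S_1B^{-1}(\lambda)R'(\lambda)$ into $(k_0,0,k_0)$ operators plus $\mathcal{AE}$, and $R(\lambda)B^{-1}(\lambda)R'(\lambda)=O(\lambda^{2(n-2m+k_0)})$ with $2(n-2m+k_0)\ge 2m+2$ throughout this range, so $\lambda^{-2m}R(\lambda)B^{-1}(\lambda)R'(\lambda)=\mathcal{AE}$. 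Collecting the $\mathcal{AE}$ pieces into $M_e$ finishes the proof.
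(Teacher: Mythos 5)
Your route is the paper's route: the Jensen--Nenciu identity \eqref{eqn:JN Minv id}, a low-energy expansion of $M(\lambda)$, the two-sided and one-sided cancellations coming from $S_1=S_{k_0+1}$, and a tracking of the two outer $(M(\lambda)+S_1)^{-1}$ factors to produce the $(k_0,k_0,0)$ leading term and the $(0,k_0,k_0)$, $(k_0,0,k_0)$ cross terms. You also correctly identify the dichotomy $k_0\geq 2k-1$ (in particular $n\geq 4m$) versus $k\leq k_0\leq 2k-2$, and the quadratic cross term estimate via $n-2m+k_0\geq m+1$. However, there are two genuine gaps. First, the expansion of $M(\lambda)$ with errors controlled in the $O_{\lceil\frac{n}{2}\rceil+1}(\lambda^{2m+1})$ sense is asserted rather than proved (you flag it yourself); this is Lemma~\ref{lem:R0 for Mexp_small} in the paper, and it requires the case analysis $\lambda|x-y|\ll 1$ versus $\lambda|x-y|\gtrsim 1$ together with the derivative bounds of Lemma~\ref{prop:F}, and it is precisely where the hypothesis $\beta>\max(8m-n,n_\star)+2$ is consumed.

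The more serious gap is the error bookkeeping in the cross terms for $2m<n<4m$ when $k\leq k_0\leq 2k-2$, i.e.\ exactly when nontrivial one-sided terms survive. You expand $(M(\lambda)+S_1)^{-1}=D_0+\mathcal{AE}$ in one step and conclude $R(\lambda)=\sum_j\omega^{\gamma_j}(\lambda)\Gamma_j'S_{k_0+1}+\mathcal{AE}$, and later that $R(\lambda)B^{-1}(\lambda)S_1$ is a sum of $(0,k_0,k_0)$ operators plus $\mathcal{AE}$. This does not survive the prefactor $\lambda^{-2m}$: a remainder that is merely $\mathcal{AE}$ becomes $\lambda^{-2m}\mathcal{AE}$, which is neither $\mathcal{AE}$ nor of any allowed $(k_1,k_2,\alpha)$ form (those require a $\lambda$-independent operator part). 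Concretely, the one-step split leaves $\lambda^{-2m}\big[(M(\lambda)+S_1)^{-1}-D_0\big]M_r(\lambda)B^{-1}(\lambda)S_1$, whose size is of order $\lambda^{(n-2m)+\alpha_0-2m}$ with a $\lambda$-dependent, non-finite-rank coefficient; once $n-2m+\alpha_0<2m$ (e.g.\ $m\geq 3$, $n=2m+1$, $k_0=k$) this is a genuinely singular term that can be absorbed neither into $M_e$ nor into $\widetilde M$. The paper's fix at precisely this point is an $N\geq m$-fold Neumann expansion of $(M(\lambda)+S_1)^{-1}$ about $D_0$: the explicit terms $D_0\big[\sum_j\lambda^{n-2m+2j}vG_j^1vD_0\big]^iM_r(\lambda)$ have $\lambda$-independent operator parts and retain the right-hand projection, hence give $(0,k_0,\alpha_0)$ operators with $\alpha_0\geq k_0$, while the remainder is $O_{\lceil\frac{n}{2}\rceil+1}(\lambda^{N+\alpha_0})=O_{\lceil\frac{n}{2}\rceil+1}(\lambda^{2m+1})$ and only then becomes $\mathcal{AE}$ after dividing by $\lambda^{2m}$ (this is where $\alpha_0\geq m+1$ is used). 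The same issue touches your claim $B(\lambda)=-cT_{k+1}+O(\lambda)$: since $B$ is built from $S_1(M(\lambda)+S_1)^{-1}S_1$ rather than $S_1M(\lambda)S_1$, one must control the quadratic term $\lambda^{-2m}S_1M(\lambda)(M(\lambda)+S_1)^{-1}M(\lambda)S_1$ using the one-sided bound $M(\lambda)S_1=O_{\lceil\frac{n}{2}\rceil+1}(\lambda^{\alpha_0})$ --- an ingredient your degree count provides but which you never invoke there. Two smaller slips: ``$S_1v=0$'' holds only for $v$ (and $vx^\alpha$, $|\alpha|\leq k_0-1$) regarded as functions, not as an operator identity; and for even $2m<n<4m$ with $k_0\leq 2k-2$ the $\lambda^{2m}\log\lambda\, vG_1^0v$ term is \emph{not} killed one-sidedly --- it survives into the one-sided remainder as an $\omega^{2m-}$ term (harmless since $2m->k_0$, but it must be carried through the cross terms rather than discarded).
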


To invert the operator $B(\lambda)$ (see   \eqref{Blamdef} for the definition) and prove Proposition~\ref{Minvsing}  in the case $2m<n\leq 4m $, we need more detailed expansions of $M(\lambda)$:
\begin{lemma}\label{lem:R0 for Mexp_small}
	When $2m<n\leq 4m$ and for $0<\lambda<\lambda_0$, we have  (with $k=2m-\lceil\frac{n}{2}\rceil+1$)
	\begin{align}\label{Mexpmidd}
		M(\lambda)= T_0+\lambda^{2m}vG_{1,L}^0v+\lambda^{2m}\log(\lambda)vG_1^0v
		+\sum^{k-1}_{j=0}\lambda^{n-2m+2j}vG_j^1v+O_{\lceil\frac{n}{2}\rceil+1}(\lambda^{2m+\epsilon}),
	\end{align}
	for any $0\leq\epsilon\leq 1$
	provided $\beta>\max(8m-n,n_\star)+2\epsilon$.
\end{lemma}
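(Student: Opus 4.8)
The plan is to reduce the claimed expansion of $M(\lambda)=U+v\mR_0(\lambda^{2m})v$ to a corresponding expansion of the free resolvent kernel $\mR_0^+(\lambda^{2m})(x,y)$ near $\lambda=0$, and then sandwich it with $v$. First I would recall the explicit form of the free resolvent kernel for $(-\Delta)^m$ in terms of the resolvent of $-\Delta$: via the factorization $(-\Delta)^m-\lambda^{2m}=\prod_{\ell=0}^{m-1}(-\Delta-\omega^\ell\lambda^2)$ with $\omega=e^{2\pi i/m}$ and a partial fractions decomposition, $\mR_0(\lambda^{2m})$ is a linear combination of resolvents $(-\Delta-z_\ell\lambda^2)^{-1}$ of the ordinary Laplacian evaluated at the $m$-th roots (times appropriate constants), exactly as in \cite{EGWaveOp, EGWaveOpExt, soffernew}. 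The kernel of each such factor in dimension $n$ is a Hankel-type function in $\lambda r$ with $r=|x-y|$, and its small-argument asymptotics is a combination of an analytic-in-$\lambda^2$ series with a leading Hankel-singular piece $\sim (\lambda r)^{n-2m}$ (with a $\log(\lambda r)$ factor when $n$ is even), together with the uniform bounds of Lemma~\ref{prop:F}. Summing over $\ell$, the purely analytic-in-$\lambda$ terms up to the relevant order produce precisely the operators $vG_j^1v$ (kernels $c_j|x-y|^{2j}$, coming from the $\lambda^{2j}$ Taylor coefficients of $e^{i\lambda r}F(\lambda r)$ that survive the root-summation), the non-analytic piece produces the $\lambda^{2m-n+2j}$-order corrections, and the $j$ such that $2j = 2m$ is the first one at which the $G_{1,L}^0$-type kernel $|x-y|^{4m-n}(b_1+b_2\log|x-y|)$ and its companion $\lambda^{2m}\log\lambda\, vG_1^0v$ appear.

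Concretely, the key steps in order would be: (i) write $\mR_0^+(\lambda^{2m})(x,y)= r^{2m-n}e^{i\lambda r}F(\lambda r)$ as in Lemma~\ref{prop:F} and Taylor-expand $e^{i\lambda r}F(\lambda r)$ in powers of $\lambda r$, tracking that the coefficient of $(\lambda r)^{2j}$ contributes $\lambda^{2j}r^{2m-n+2j}$, i.e. a kernel of the form $|x-y|^{2m-n+2j}$; (ii) observe that when combined with the partial-fractions sum over the $m$ roots, the low powers $2j$ with $2m-n+2j<2m$, i.e. $j\le k-1=2m-\lceil n/2\rceil$, do not cancel and give the terms $\lambda^{n-2m+2j}vG_j^1v$ after relabeling (the constants $c_j$ absorbing the root-sum), while the power $\lambda^{2m}$ term is the one carrying the $G_{1,L}^0$ kernel and the logarithmic term $\lambda^{2m}\log\lambda\,vG_1^0v$ in even dimensions; (iii) control the remainder: everything of order $\lambda^{2m+\epsilon}$ or higher in the $\lambda r$-expansion, including the next Hankel-singular correction, is bounded, together with its $\lceil n/2\rceil+1$ many $\lambda$-derivatives (each derivative costing a factor $\lambda^{-1}$ as in the $O_N$ notation), by a kernel of the form $|x-y|^{2m-n+2m+\epsilon}=|x-y|^{4m-n+\epsilon}$ up to $\log$'s, using the derivative bounds $|\partial_\lambda^N F(\lambda r)|\lesssim \lambda^{-N}\la\lambda r\ra^{\frac{n+1}2-2m}$ from \eqref{Fbounds}; (iv) conjugate by $v=|V|^{1/2}$: each resulting operator $vG v$ with $G$ having kernel $|x-y|^\gamma$ (possibly times $\log$) for $0\le\gamma\le 4m-n+\epsilon$ is Hilbert–Schmidt, hence absolutely bounded, provided $\la x\ra^{-\beta/2}$ kills the growth at infinity — this is where the hypothesis $\beta>\max(8m-n,n_\star)+2\epsilon$ enters, with the $8m-n$ coming from the worst kernel $|x-y|^{4m-n+\epsilon}\lesssim \la x\ra^{4m-n+\epsilon}\la y\ra^{4m-n+\epsilon}$ and the $n_\star$ from the $\mathcal{AE}$ requirement on the error already recorded in the excerpt ($v\mR_0(\lambda^{2m})v=\mathcal{AE}$ when $\beta>n_\star$).

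The main obstacle I expect is the bookkeeping in step (iii): showing the remainder is genuinely $O_{\lceil n/2\rceil+1}(\lambda^{2m+\epsilon})$ as an absolutely bounded operator, not merely pointwise in the kernel. One must verify that after subtracting the finitely many explicit terms, the tail of the $\lambda r$-expansion of $e^{i\lambda r}F(\lambda r)$ — which is where the $\la\lambda r\ra^{\frac{n+1}2-2m}$ growth of $F$ competes against the gained powers of $\lambda$ — still yields a kernel dominated by $\la x\ra^{-\beta/2}\la y\ra^{-\beta/2}$ times something locally $L^2$ in $x-y$ and decaying, uniformly after multiplying $\partial_\lambda^\ell$ by $\lambda^{\ell-2m-\epsilon}$ for $\ell\le \lceil n/2\rceil+1$. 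This is the step where the precise value of $\beta$ and the role of $\epsilon\in[0,1]$ (in particular the degenerate case $\epsilon=0$, matching the $\epsilon=1$ improvement discussed in the footnote for $n>4m$) must be handled with care; the even-dimensional logarithms require noting that $|\log\lambda|\lesssim \lambda^{-\epsilon'}$ for any $\epsilon'>0$ so they can be absorbed into a slightly smaller power, which is exactly why the statement is phrased with an open inequality on $\beta$ and a free parameter $\epsilon$. Everything else — the partial-fractions reduction, the non-cancellation of the low $G_j^1$ terms, and the Hilbert–Schmidt bounds — is routine given Lemma~\ref{prop:F} and the analogous computations already carried out in \cite{EGWaveOp,EGWaveOpExt,egl,soffernew}.
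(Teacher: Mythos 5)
Your proposal follows essentially the same route as the paper: the paper simply quotes the small-argument expansion of $\mR_0(\lambda^{2m})(x,y)$ from \cite{soffernew} (rather than re-deriving it by partial fractions and Hankel asymptotics), splits into the regimes $\lambda|x-y|\ll 1$ and $\lambda|x-y|\gtrsim 1$, bounds the remainder together with its $\lceil\frac{n}{2}\rceil+1$ many $\lambda$-derivatives in each regime (using \eqref{Fbounds} and the trick of multiplying or dividing by powers of $\lambda|x-y|$, with $\log$'s absorbed as $(\lambda|x-y|)^{0+}$), and then closes with Hilbert--Schmidt bounds on $vEv$, which is exactly where $\beta>8m-n+2\epsilon$ (from the $|x-y|^{4m-n+\epsilon}$ kernel) and $\beta>n_\star+2\epsilon$ (from the growing local kernel) enter, as you indicated. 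One bookkeeping correction to your steps (i)--(ii): the two families are swapped -- the terms $\lambda^{n-2m+2j}vG_j^1v$ (kernels $|x-y|^{2j}$) come from the branch-cut/Bessel-$J$ part of the expansion, i.e.\ the powers $(\lambda r)^{n-2m+2j}$, while the part analytic in $\lambda^{2m}$ produces $G_0^0$, $\lambda^{2m}G_{1,L}^0$ and $\lambda^{2m}\log\lambda\,vG_1^0v$ (there are no powers $\lambda^{2j}$ with $0<2j<2m$, since the resolvent is a function of $\lambda^{2m}$); this does not affect the argument, as the correct expansion is precisely the one you cite from \cite{soffernew}.
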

We note that the finite sum   contributes powers of $\lambda$ between zero and $2m$, the smallest of which is $\lambda^{n-2m}$, which complicates matters.

\begin{proof}
	We need to consider cases as the resolvents behave differently in even and odd dimensions and based on the size of $\lambda|x-y|$.
	When $\lambda|x-y|\ll 1$, we have (see  Lemma~2.3, equation (2.2), and Remark 2.2 in \cite{soffernew}) that
	\begin{align*}
		\mR_0(\lambda^{2m})(x,y)=&a_0|x-y|^{2m-n}
		+\sum^\infty_{j=0}b_j\lambda^{n-2m+2j}|x-y|^{2j}\\
		&+\sum^\infty_{j=1}\lambda^{2mj}|x-y|^{2m(j+1)-n}\Big(c_{j,1}+c_{j,2}\log(|x-y|)+c_{j,3}\log(\lambda)\Big).
	\end{align*}
	When $n$ even, the first logarithmic term in the expansion occurs with a coefficient of $\lambda^{2m}$  causing the additional leading term in the statement of the Lemma.  In particular, if $n$ is odd $c_{j,2}=c_{j,3}=0$ for all $j$. Let 
	$$
	E(\lambda)= \sum^\infty_{j=k}b_j\lambda^{n-2m+2j}|x-y|^{2j}+\sum^\infty_{j=2}\lambda^{2mj}|x-y|^{2m(j+1)-n}\Big(c_{j,1}+c_{j,2}\log(|x-y|)+c_{j,3}\log(\lambda)\Big).
	$$
	Since $\lambda|x-y|\ll 1$, we may divide by positive powers of $\lambda |x-y|$ ($n+2-2m-\epsilon$ and $2m-\epsilon$ respectively) to  dominate $E(\lambda)$ by  $\lambda^{2m+\epsilon}|x-y|^{4m-n+\epsilon}$ since $n-2m+2k=2m+2-2\{\frac{n}{2}\}>2m$.  In this regime, differentiation is comparable to division by $\lambda$.
	
	When $\lambda|x-y|\gtrsim 1$, the bound on the error term when $\lambda|x-y|\gtrsim 1$ follows from the bounds in Lemma~2.3 of \cite{EGWaveOp} and the definition of the kernels of $G_j$.  We use $\log(\lambda|x-y|)\lesssim(\lambda|x-y|)^{0+}$ and note
	\begin{align*}
		\lambda^{\ell}|\partial_\lambda^\ell E(\lambda)|=&\lambda^{\ell}\partial_\lambda^\ell\Big|\mR_0(\lambda^{2m})-G_0^0(x,y)-\lambda^{2m}\log(\lambda)G_1^0(x,y)-\lambda^{2m}G_{1,L}^0-\sum^{k-1}_{j=0}\lambda^{n-2m+2j}G_j^1\Big|\\
		\les&\lambda^{\frac{n+1}{2}-2m+\ell} |x-y|^{\frac{1-n}{2}+\ell}+\lambda^{2m+}|x-y|^{4m-n+}+\lambda^{2m-2\{\frac{n}{2}\}}|x-y|^{4m-2\lceil\frac{n}{2}\rceil}.
	\end{align*}
	We note that $\frac{1-n}{2}+\ell\leq \frac{1-n}{2}+\lceil \frac{n}2\rceil+1=\{\frac{n}2\}+\frac32$, we may write the first term as $\lambda^{\frac{n-1}{2}-2m+\ell} |x-y|^{\frac{1-n}{2}+\ell} $, and multiply by $(\lambda|x-y|)^{\alpha+}\gtrsim 1$ for $\alpha=4m-(\frac{n-1}{2}+\ell)+\epsilon>0$ to see 
	$$
	\lambda^{\ell}|\partial_\lambda^\ell E(\lambda)\widetilde\chi(\lambda|x-y|)|\les \lambda^{2m+\epsilon}[|x-y|^{\{\frac{n}2\}+\frac32+\epsilon}+|x-y|^{4m-n+\epsilon}].
	$$
	
	The same result follows for $n$ odd with the absence of the logarithmic terms, noting that $2(k-1)=4m+2-2\lceil\frac{n}{2}\rceil-2=4m-n-1$ in this case.
	
	From the definition of $E(\lambda)$, we have
	\begin{multline*}
		M(\lambda)= U+v\mR_0(\lambda^{2m})v \\ =U+vG_0^0v+\lambda^{2m}vG_{1,L}^0v+\lambda^{2m}\log(\lambda)vG_1^0v+\sum^{k-1}_{j=0}\lambda^{n-2m+2j}vG_j^1v+vE(\lambda)v,
	\end{multline*}
	where for $0\leq \ell \leq \lceil\frac{n}2\rceil+1$
	$$
	\sup_{0<\lambda<\lambda_0}\lambda^{\ell-2m-\epsilon}|\partial_\lambda^\ell vE(\lambda)v|\les v(x)(|x-y|^{\{\frac{n}2\}+\epsilon}+|x-y|^{4m-n+\epsilon})v(y).
	$$
	The first term is bounded provided $\beta>n_{\star}+2\epsilon$. The second term is Hilbert-Schmidt provided $\beta>8m-n+2\epsilon$ since $n\leq 4m$.  
\end{proof}

In particular, we note that Lemma~\ref{lem:R0 for Mexp_small} yields that $M(\lambda)+S_1=(T_0+S_1)+O_{\lceil\frac{n}{2}\rceil+1}(\lambda^{\min(2m-,n-2m)})$.   
To  invert the operator $B(\lambda)$ (see \eqref{Blamdef}),
$$
B(\lambda)=\frac{1}{\lambda^{2m}}(S_{ 1}(M(\lambda)+S_{ 1})^{-1}S_{ 1}-S_{ 1}),
$$
on $S_1L^2$ in lower dimensions, one must capture the natural orthogonality of $S_1=S_{k+1}$.  In fact, we will work under the assumption that $S_1=S_{k_0+1}$ for some $k_0\geq k$, where $k\leq k_0\leq 2m+1$ is as in Theorem~\ref{thm:main_low}. We have the following series of lemmas.
\begin{lemma}\label{lem:cancel} Let $  k_0\geq 1$.  Then for $j\leq\frac{k_0-1}{2}$
	\[S_{k_0+1}vG_j^1v=vG_j^1vS_{k_0+1}=0.\]
	Moreover, for $j\leq k_0-1$ 
	\[S_{k_0+1}vG_j^1v S_{k_0+1}=0.\]
\end{lemma}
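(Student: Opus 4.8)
The plan is to exploit the fact that the integral kernel $G_j^1(x,y) = c_j|x-y|^{2j}$ is a polynomial in $x$ (and in $y$) of degree $2j$, together with the defining orthogonality of $S_{k_0+1}$, namely $\gen{v\phi, x^\alpha} = 0$ for all $\phi \in S_{k_0+1}L^2$ and all $|\alpha| \leq k_0-1$; see \eqref{defn:Si}. First I would expand $|x-y|^{2j} = (|x|^2 - 2x\cdot y + |y|^2)^j$ by the multinomial theorem. Each resulting monomial has the form $c_{\alpha,\beta}\, x^\alpha y^\beta$ with $|\alpha| + |\beta| \leq 2j$ (the powers of $|x|^2$ contribute even multi-indices in $x$, the cross term $x\cdot y$ contributes degree one in each, and the powers of $|y|^2$ contribute even multi-indices in $y$). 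Consequently $G_j^1$ has the finite-rank form
\[
G_j^1(x,y) = \sum_{|\alpha|+|\beta|\leq 2j} c_{\alpha,\beta}\, x^\alpha y^\beta.
\]

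For the first assertion, suppose $2j \leq k_0 - 1$, i.e. $j \leq \frac{k_0-1}{2}$. Then in every monomial above we have $|\alpha| \leq 2j \leq k_0-1$ and likewise $|\beta| \leq k_0-1$. To compute $\big(S_{k_0+1} v G_j^1 v\big)\phi$ for arbitrary $\phi \in L^2$, observe that $vG_j^1 v$ applied to $\phi$ produces the function $x \mapsto v(x)\sum_\alpha c'_\alpha x^\alpha \gen{v\phi, \,(\cdot)^{\beta}}$-type combinations; more precisely $\big(vG_j^1 v\phi\big)(x) = \sum_{|\alpha|+|\beta|\le 2j} c_{\alpha,\beta}\, v(x) x^\alpha \gen{y^\beta v, \phi}$, which is a finite linear combination of the functions $v(x)x^\alpha$ with $|\alpha|\leq 2j\leq k_0-1$. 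Applying $S_{k_0+1}$ to such a function: for any $\psi \in S_{k_0+1}L^2$ we have $\gen{\psi, v x^\alpha} = \overline{\gen{vx^\alpha,\psi}} = \overline{\gen{v\psi, x^\alpha}} = 0$ since $|\alpha| \leq k_0-1$. Because $S_{k_0+1}$ is the orthogonal projection onto $S_{k_0+1}L^2$, a vector orthogonal to that subspace is killed by $S_{k_0+1}$; hence $S_{k_0+1} v G_j^1 v = 0$. The identity $vG_j^1 v S_{k_0+1} = 0$ follows by taking adjoints, since $G_j^1$ is real and symmetric and $S_{k_0+1}$ is self-adjoint, or equivalently by the mirror argument using the $y^\beta$ factor.

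For the second (sandwiched) assertion, assume only $j \leq k_0 - 1$, i.e. $2j \leq 2k_0-2$. Now a monomial $x^\alpha y^\beta$ in $G_j^1$ need not have $|\alpha| \leq k_0-1$; but since $|\alpha| + |\beta| \leq 2j \leq 2k_0 - 2$, at least one of $|\alpha| \leq k_0-1$ or $|\beta| \leq k_0 - 1$ must hold. If $|\alpha| \leq k_0-1$, then left multiplication by $S_{k_0+1}$ annihilates the term exactly as above (the function $v x^\alpha$ is orthogonal to $S_{k_0+1}L^2$). If instead $|\beta| \leq k_0-1$, then right multiplication by $S_{k_0+1}$ annihilates it, since $(vG\ldots)$ acting after $S_{k_0+1}$ pairs $\phi$ against $v y^\beta$ through $\gen{v y^\beta, S_{k_0+1}\phi} = \gen{S_{k_0+1}(v y^\beta), \phi} = 0$. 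Splitting the sum defining $G_j^1$ according to whether $|\alpha|\leq k_0-1$ or $|\beta|\leq k_0-1$ (overlaps are harmless), every term of $S_{k_0+1} v G_j^1 v S_{k_0+1}$ vanishes, proving the claim.

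The only mild subtlety—and the step I would be most careful with—is the bookkeeping in the multinomial expansion: one must check that the cross term $-2x\cdot y$ raised to a power $p$ and multiplied by powers of $|x|^2$ and $|y|^2$ summing with $p$ to at most $j$ never produces a monomial of degree exceeding $k_0-1$ in both variables simultaneously when $2j \le 2k_0-2$. This is immediate from $|\alpha|+|\beta| \le 2j$, so there is no real obstacle; the lemma is essentially a clean consequence of degree counting against the orthogonality built into the spaces $S_{i+1}L^2$.
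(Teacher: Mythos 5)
Your argument is correct, and it is essentially the standard proof of this fact: the paper itself does not prove the lemma but cites Lemma~8.9 of \cite{soffernew}, where the same idea is used — expand $|x-y|^{2j}$ into monomials $x^\alpha y^\beta$ with $|\alpha|+|\beta|\leq 2j$ and count degrees against the defining orthogonality $\langle v\phi, x^\alpha\rangle=0$, $|\alpha|\leq k_0-1$, of $S_{k_0+1}L^2$, noting for the sandwiched case that no monomial can have degree at least $k_0$ in both variables when $2j\leq 2k_0-2$. Your handling of the one-sided versus two-sided claims and the adjoint step are exactly as in that reference, so there is nothing to add.
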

For a proof of this standard orthogonality lemma see, e.g., Lemma~8.9 of \cite{soffernew}. 
\begin{lemma}\label{lem:one_side_cancel} Assume that $S_1=S_{k_0+1}$ for some $k_0\geq k$, where $k\leq k_0\leq 2m+1$, and $\beta>\max(8m-n,n_*)+2$.  
	Let  (the $\omega^{2m-}(\lambda)$ term only appears when $n$ is even)
	$$
	M_{ r}(\lambda):=  \sum_{\frac{k_0-1}2< j\leq k-1} \lambda^{n-2m+2j}vG_j^1vS_{ 1}  + \omega^{2m-}(\lambda) vG_1^0vS_{ 1}.
	$$
	We have
	\be\label{MS_1detail}	
	M(\lambda)S_1 = M_{ r}(\lambda) + \lambda^{2m}vG_{1,L}^0vS_{ 1} +O_{\lceil\frac{n}{2}\rceil+1}(\lambda^{2m+1})  
	\ee
	and 	$M(\lambda) S_{ 1}=O_{\lceil\frac{n}{2}\rceil+1}(\lambda^{ \alpha_0})$, where $\alpha_0=\min(n-2m+2\lfloor\frac{k_0-1}2\rfloor+2 ,2m-)$.
	The  analogous claim for $S_1M(\lambda)$ holds with 
	$$M_{ \ell}(\lambda) := \sum_{\frac{k_0-1}2< j\leq k-1} \lambda^{n-2m+2j}S_{ 1} vG_j^1v + \omega^{2m-}(\lambda)S_{ 1} vG_1^0v.$$
	In particular, $M_{ r}=M_{ \ell}=0$ if $k_0\geq 2 k-1=4m-2\lceil\tfrac{n}2\rceil+1$.
	Finally,
	$$
	S_{ 1} M(\lambda) S_{ 1}= \lambda^{2m}T_{k+1} + O_{\lceil\frac{n}{2}\rceil+1}(\lambda^{2m+1})=O_{\lceil\frac{n}{2}\rceil+1}(\lambda^{2m}).
	$$
\end{lemma}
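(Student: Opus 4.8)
The plan is to start from the detailed expansion of $M(\lambda)$ in Lemma~\ref{lem:R0 for Mexp_small}, multiply on the right by $S_1=S_{k_0+1}$, and track which terms survive the orthogonality relations in Lemma~\ref{lem:cancel}. Writing out \eqref{Mexpmidd}, the term $T_0 S_1$ vanishes since $S_1$ projects onto $\mathrm{Ker}(T_0)$. The term $\lambda^{2m}vG_{1,L}^0vS_1$ is kept as is (it is the main surviving term, of size $\lambda^{2m}$; the logarithm in even dimensions is absorbed into the $\omega^{2m-}$ notation to form the $\omega^{2m-}(\lambda)vG_1^0vS_1$ piece together with the $\lambda^{2m}\log\lambda\, vG_1^0v$ term). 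Of the finite sum $\sum_{j=0}^{k-1}\lambda^{n-2m+2j}vG_j^1vS_1$, Lemma~\ref{lem:cancel} says $vG_j^1vS_{k_0+1}=0$ whenever $j\leq\frac{k_0-1}{2}$, so only the indices $j$ with $\frac{k_0-1}{2}<j\leq k-1$ contribute, which is exactly the definition of $M_r(\lambda)$. The $O_{\lceil n/2\rceil+1}(\lambda^{2m+\epsilon})$ error from Lemma~\ref{lem:R0 for Mexp_small} is upgraded to $O_{\lceil n/2\rceil+1}(\lambda^{2m+1})$ by taking $\epsilon=1$, which is permitted under the stated hypothesis $\beta>\max(8m-n,n_\star)+2$. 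This establishes \eqref{MS_1detail}.

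Next, for the order-of-vanishing claim $M(\lambda)S_1=O_{\lceil n/2\rceil+1}(\lambda^{\alpha_0})$: each surviving term in $M_r(\lambda)$ has $j\geq\lfloor\frac{k_0-1}{2}\rfloor+1$, hence carries a power $\lambda^{n-2m+2j}$ with $n-2m+2j\geq n-2m+2\lfloor\frac{k_0-1}{2}\rfloor+2$; the $\omega^{2m-}$ term contributes $\lambda^{2m-}$; and the remaining terms are $O(\lambda^{2m})$ or smaller. Taking the minimum of these exponents gives $\alpha_0=\min(n-2m+2\lfloor\frac{k_0-1}{2}\rfloor+2,\,2m-)$, and the derivative bounds come directly from the $O_{\lceil n/2\rceil+1}$ structure (differentiation being comparable to division by $\lambda$ on the relevant scales, consistent with the $\omega$-calculus). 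The claim $M_r=M_\ell=0$ when $k_0\geq 2k-1=4m-2\lceil\frac n2\rceil+1$ follows because then $\frac{k_0-1}{2}\geq k-1$, so the index set $\{j:\frac{k_0-1}{2}<j\leq k-1\}$ is empty and the $\omega^{2m-}(\lambda)vG_1^0vS_1$ term vanishes by Lemma~\ref{lem:cancel} as well (since $j=1\leq\frac{k_0-1}{2}$ in that range, using $k_0\geq 3$).

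The statement for $S_1M(\lambda)$ is obtained by the symmetric argument, multiplying \eqref{Mexpmidd} on the left by $S_1$ and using the left-handed versions of the cancellation identities in Lemma~\ref{lem:cancel}; alternatively one can take adjoints, noting that all the $G$-operators are self-adjoint and $S_1^*=S_1$. Finally, for the two-sided statement $S_1M(\lambda)S_1=\lambda^{2m}T_{k+1}+O_{\lceil n/2\rceil+1}(\lambda^{2m+1})$: sandwiching \eqref{Mexpmidd} between copies of $S_1$, the term $S_1T_0S_1=0$; by Lemma~\ref{lem:cancel} the stronger two-sided cancellation $S_{k_0+1}vG_j^1vS_{k_0+1}=0$ holds for all $j\leq k_0-1\geq k-1$, so the entire finite sum dies, as does $S_1vG_1^0vS_1$ (the $j=1$ case, using $k_0\geq k\geq 2$, hence $k_0-1\geq 1$), killing the even-dimensional logarithmic terms; what remains is $\lambda^{2m}S_1vG_{1,L}^0vS_1=\lambda^{2m}T_{k+1}$ by the definition of $T_{k+1}$ recalled in Section~\ref{sec:Minv}, plus the $O_{\lceil n/2\rceil+1}(\lambda^{2m+1})$ error, which is itself $O_{\lceil n/2\rceil+1}(\lambda^{2m})$. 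The main obstacle is purely bookkeeping: correctly matching the index ranges coming from the two versions of Lemma~\ref{lem:cancel} (the one-sided bound $j\leq\frac{k_0-1}{2}$ versus the two-sided bound $j\leq k_0-1$) against the truncation $j\leq k-1$ in the expansion, and verifying that the even-dimensional logarithmic terms are consistently handled by the $\omega^{2m-}$ notation so that the error is genuinely $O(\lambda^{2m+1})$ and not merely $O(\lambda^{2m}\log\lambda)$; this is exactly where the hypothesis $\beta>\max(8m-n,n_\star)+2$ (the $\epsilon=1$ case of Lemma~\ref{lem:R0 for Mexp_small}) is used.
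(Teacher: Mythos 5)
Your overall route --- expanding $M(\lambda)$ via Lemma~\ref{lem:R0 for Mexp_small} (i.e.\ \eqref{Mexpmidd} with $\epsilon=1$), using $T_0S_1=0$ and the cancellations of Lemma~\ref{lem:cancel}, reading off $\alpha_0$ from the smallest surviving index $j=\lfloor\frac{k_0-1}{2}\rfloor+1$, and sandwiching between two copies of $S_1$ to get $S_1M(\lambda)S_1=\lambda^{2m}T_{k+1}+O_{\lceil\frac n2\rceil+1}(\lambda^{2m+1})$ --- is exactly the paper's argument, and most of it is fine. The one genuine flaw is in how you dispose of the $vG_1^0v$ terms. You treat $G_1^0$ as if it were the member $G^1_1$ of the family $G^1_j(x,y)=c_j|x-y|^{2j}$ (a degree-two polynomial kernel), asserting that $\omega^{2m-}(\lambda)vG_1^0vS_1$ vanishes ``since $j=1\leq\frac{k_0-1}{2}$, using $k_0\geq3$,'' and similarly that $S_1vG_1^0vS_1=0$ because $1\leq k_0-1$. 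But $G_1^0$ is a different operator: its kernel is a constant multiple of $|x-y|^{4m-n}$, which for $n$ even equals $|x-y|^{2(k-1)}$ since $k=2m-\tfrac n2+1$. So in the polynomial scale it plays the role of $G^1_{k-1}$, not of $G^1_1$.

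Consequently, the one-sided cancellation $vG_1^0vS_{k_0+1}=0$ requires $k-1\leq\frac{k_0-1}{2}$, i.e.\ exactly $k_0\geq 2k-1$, not merely $k_0\geq 3$. For $3\leq k_0<2k-1$ (possible when $k\geq3$) the term does not vanish, which is precisely why the lemma keeps it inside $M_r$; your reasoning, taken at face value, would empty $M_r$ of this term in that range and thus contradicts the very statement being proved. Likewise, in the two-sided claim the correct justification is the second part of Lemma~\ref{lem:cancel} applied with $j=k-1\leq k_0-1$ (valid since $k_0\geq k$), not with $j=1$. With this degree bookkeeping corrected the proof closes and coincides with the paper's; note also that when $n=4m$ (so $k=1$) the kernel of $G_1^0$ is constant and is annihilated by the projection for every $k_0\geq1$.
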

\begin{proof} Using \eqref{Mexpmidd} with $\epsilon=1$, the cancellation property in Lemma~\ref{lem:cancel}, and $T_0S_1=0$, we obtain the first claim.
	Note that  when $n$ is odd, $\omega^{2m-}(\lambda)$ term  doesn't exist.
	The claim 	$M(\lambda) S_{ 1}=O_{\lceil\frac{n}{2}\rceil+1}(\lambda^{ \alpha_0})$ holds by noting that the smallest value of $j$ in the definition of $M_r$ is $\lfloor\tfrac{k_0-1}2\rfloor+1$. 
	
	The claim $M_{ r}=M_{ \ell}=0$ if $k_0\geq 2 k-1=4m-2\lceil\tfrac{n}2\rceil+1$ holds by noting that the finite sum is empty and the kernel of $vG_1^0v$ is 
	$$ vG_1^0v(x,y)=v(x)|x-y|^{4m-n}v(y)=v(x)|x-y|^{2(k-1)}v(y),$$
	as when $n$ is even $k=2m-\frac{n}{2}+1$. This contribution of this term is zero due to the projection on either side.  When $n$ is odd, this operator doesn't exist. 
	The final claim holds by using the second claim of Lemma~\ref{lem:cancel}.
\end{proof}
Here we note that $\alpha_0\geq m+1$, which can be seen by considering the cases $n (\text{mod }4) =0,1,2,3$ separately.\footnote{In the case $m=1$ and $n=4$, we only have $\alpha_0\geq 2- =m+1-$ unless $k_0=2m+1=3$. We omit this since the cases $k_0=k=1, k_0=2$ were already considered in \cite{Yaj4d}. }

We recall that $[M(\lambda)+S_1]^{-1}$ is $\mathcal{AE}$.
By the resolvent identity and Lemma~\ref{lem:one_side_cancel}, one has (recalling   $D_0=(T_0+S_1)^{-1}$, $T_0S_1=S_1T_0=0$,  $S_1D_0=D_0S_1=S_1$)
\begin{multline}\label{eqn:S1 Minv low d1}
	([M(\lambda)+S_1]^{-1}-D_0)S_1=-(M(\lambda)+S_1)^{-1}[ M(\lambda)+S_1 -T_0-S_1)]D_0S_1\\
	=-(M(\lambda)+S_1)^{-1}  M(\lambda) S_1 
	=(M(\lambda)+S_1)^{-1}O_{\lceil\frac{n}{2}\rceil+1}(\lambda^{\alpha_0}) = O_{\lceil\frac{n}{2}\rceil+1}(\lambda^{\alpha_0}) .
\end{multline}
Similarly, $
S_1([M(\lambda)+S_1]^{-1}-D_0) 
=O_{\lceil\frac{n}{2}\rceil+1}(\lambda^{\alpha_0})$. 
Using these, and the  resolvent identity as above, we have 
\begin{multline}\label{Blamsmalln}
	B(\lambda)=\frac1{\lambda^{2m}} S_1[(M(\lambda)+S_1)^{-1}-D_0]S_1
	=-\frac1{\lambda^{2m}}S_1 (M(\lambda)+S_1)^{-1}M(\lambda) S_1\\
	=-\frac1{\lambda^{2m}}S_1 M(\lambda )S_1 - \frac1{\lambda^{2m}}S_1[(M(\lambda)+S_1)^{-1}-D_0] M(\lambda) S_1\\
	=-\frac1{\lambda^{2m}} S_1 M(\lambda )S_1 - \frac1{\lambda^{2m}}S_1M(\lambda)  (M(\lambda)+S_1)^{-1} M(\lambda) S_1\\
	=-T_{k+1} +\frac1{\lambda^{2m}}S_1O_{\lceil\frac{n}{2}\rceil+1}(\lambda^{2m+1}) S_1+\frac1{\lambda^{2m}}S_1O_{\lceil\frac{n}{2}\rceil+1}(\lambda^{\alpha_0}) (M(\lambda)+S_1)^{-1}O_{\lceil\frac{n}{2}\rceil+1}(\lambda^{\alpha_0})S_1\\
	=-T_{k+1} +S_1O_{\lceil\frac{n}{2}\rceil+1}(\lambda^{1})S_1=\mathcal{AE}.
\end{multline} 
Therefore, by \eqref{ONinv}, 
$$S_1 B^{-1} S_1= - S_1T_{k+1}^{-1}S_1+ S_1 O_{\lceil\frac{n}{2}\rceil+1}(\lambda^{1})S_1 = S_1 O_{\lceil\frac{n}{2}\rceil+1}(\lambda^{0})S_1.$$
This also implies that $S_1 B^{-1} S_1=\mathcal{AE}$.
The contribution of $S_1 B^{-1} S_1$ to $  M^{-1}(\lambda)$ is 
\be\label{MBsmalln}
- \frac1{\lambda^{2m}} (M(\lambda)+S_1)^{-1} S_1 B^{-1}(\lambda) S_1  (M(\lambda)+S_1)^{-1}.
\ee
Note that, by Lemma~\ref{lem:cancel}, we have  
\be\label{Mk0AE}
\frac{M(\lambda)S_1}{\lambda^{2m}} = \frac{M_{r}(\lambda)}{\lambda^{2m}} +\mathcal{AE},
\ee
and if $k_0\geq 2k-1$, then $\frac{M(\lambda)S_1}{\lambda^{2m}}=\mathcal{AE}$.

Using \eqref{eqn:S1 Minv low d1}, and then \eqref{Mk0AE} and that $(M(\lambda)+S_1)^{-1}$ is $\mathcal{AE}$, we write
$$
\frac1{\lambda^{2m}} (M(\lambda)+S_1)^{-1} S_1
=\frac1{\lambda^{2m}} S_1 + \mathcal{AE}-(M(\lambda)+S_1)^{-1}  \frac{M_{r}(\lambda)}{\lambda^{2m}} .  
$$ 
We therefore have (recalling that $S_1B^{-1}S_1$ and $(M(\lambda)+S_1)^{-1}$  are $\mathcal{AE}$) 
\begin{multline*}
	\eqref{MBsmalln}= \mathcal{AE} -  S_1  B^{-1}(\lambda) S_1\frac{ S_1  (M(\lambda)+S_1)^{-1} }{\lambda^{2m}}+\\
	+(M(\lambda)+S_1)^{-1} M_{r}(\lambda)  S_1  B^{-1}(\lambda) S_1 \frac{S_1  (M(\lambda)+S_1)^{-1}}{\lambda^{2m}}.  
\end{multline*}
Repeating the argument above this time for $\frac{1}{\lambda^{2m}}S_1(M(\lambda)+S)^{-1}$ and noting that $M_{r},M_\ell=\mathcal{AE}$ and $\frac{M_{r}S_1B^{-1}(\lambda)S_1M_\ell}{\lambda^{2m}}=\mathcal{AE}$ since $\alpha_0\geq m+1$, we obtain
\begin{multline*}
	\eqref{MBsmalln}= \mathcal{AE} -   \frac{1}{\lambda^{2m}}S_1  B^{-1}(\lambda)  S_1+\frac{1}{\lambda^{2m}}S_1  B^{-1}(\lambda)  S_1 M_{ \ell} (M(\lambda)+S_1)^{-1} \\
	+\frac{1}{\lambda^{2m}}(M(\lambda)+S_1)^{-1} M_{ r}(\lambda) S_1  B^{-1}(\lambda)  S_1  \\
	-\frac{1}{\lambda^{2m}} (M(\lambda)+S_1)^{-1} M_{ r}(\lambda)  S_1  B^{-1}(\lambda) S_1 M_{ \ell} (\lambda) (M(\lambda)+S_1)^{-1}\\
	= \mathcal{AE} -   \frac{1}{\lambda^{2m}}S_1  B^{-1}(\lambda)  S_1+\frac{1}{\lambda^{2m}}S_1  B^{-1}(\lambda)  S_1 M_{ \ell} (\lambda) (M(\lambda)+S_1)^{-1} \\
	+\frac{1}{\lambda^{2m}}(M(\lambda)+S_1)^{-1} M_{ r}(\lambda) S_1  B^{-1}(\lambda)  S_1 .  
\end{multline*}
In particular, if $k_0\geq 2k-1$, $M_r=M_\ell=0$, so we have 
$$
M^{-1}(\lambda)=\mathcal{AE}-\frac1{\lambda^{2m}}S_1B^{-1}S_1,
$$
proving the claim in this case. Note that this completes the proof of Proposition~\ref{Minvsing} when $n=4m$ as $k=1$ and $k_0\geq k=1$ implies $k_0\geq 2k-1=1$. 

For $k\leq k_0\leq 2k-2$, recall that $M_{r}=O_{\lceil\frac{n}{2}\rceil+1}(\lambda^{\alpha_0})$. By a Neumann series expansion (with $N\geq m$), we have
$$
\frac{1}{\lambda^{2m}}(M(\lambda)+S_1)^{-1} M_{r}(\lambda) = 
\mathcal{AE}+\frac{1}{\lambda^{2m}}\Big[\sum_{i=0}^N (-1)^i D_0[(M(\lambda)-T_0)D_0]^i\Big]M_{r}(\lambda),
$$
as $M(\lambda)-T_0=O_{\lceil\frac{n}{2}\rceil+1}(\lambda^{ 1})$, and hence the remainder from the Neumann series expansion is $\mathcal {AE}$: 
$$[(M(\lambda)-T_0)D_0]^N (M(\lambda)+S_1)^{-1} M_{r}(\lambda) =O_{\lceil\frac{n}{2}\rceil+1}(\lambda^{ N+\alpha_0})=O_{\lceil\frac{n}{2}\rceil+1}(\lambda^{ 2m+1})=\mathcal{AE}.$$
Now, any term in $M(\lambda)-T_0$ that is $O_{\lceil\frac{n}2\rceil+1}(\lambda^{2m+1-\alpha_0})$ leads to an $\mathcal {AE}$ operator,  see \eqref{MS_1detail},  we  have
\begin{multline*}
	\frac{1}{\lambda^{2m}}(M(\lambda)+S_1)^{-1} M_{r}(\lambda) = 
	\mathcal{AE}+\frac{1}{\lambda^{2m}} D_0 M_{r}(\lambda) \\+ \frac{1}{\lambda^{2m}} \Big[\sum_{i=1}^N (-1)^i D_0\big[\sum_{j=0}^{k-1}\lambda^{n-2m+2j} vG_j^1v D_0\big]^i\Big]\sum_{\frac{k_0-1}2< j\leq k-1} \lambda^{n-2m+2j}vG_j^1vS_{1} .
\end{multline*}
Using this, and the analogous expansion with $M_\ell$, we  write  (for $k\leq k_0\leq 2k-2$)
$$
M^{-1}(\lambda)=\mathcal{AE}+\lambda^{-2m}\widetilde M(\lambda),
$$
where 
$\widetilde M$ is a finite linear combination of operators of the form $(k_1,k_2,\alpha)$, where 
$$
(k_1,k_2,\alpha)\in\{ (k_0,k_0,0),(0,k_0,\alpha_0),(k_0,0,\alpha_0)\},
$$ 
where $\alpha_0=n-2m+2\lfloor\frac{k_0-1}2\rfloor+2$ when $n$ is odd and $\alpha_0=\min(n-2m+2\lfloor\frac{k_0-1}2\rfloor+2 ,2m-)$ when $n$ is even. Note that 
$ \alpha_0\geq k_0$ since $k_0\leq 2k-2$, which finishes the proof of Proposition~\ref{Minvsing}. 

\section{Proof of the representation \eqref{Gammakrep} for $\Gamma_\kappa(\lambda)$}
In this section, using Proposition~\ref{Minvsing}, we will obtain the representation \eqref{Gammakrep} for $\Gamma_\kappa(\lambda)$.  Note that for $2m<n<4m$, Proposition~\ref{Minvsing} already implies \eqref{Gammakrep} for $\kappa=0$, though we will establish it for arbitrary $\kappa$.  When $n\geq 4m$, we also need pointwise bounds for the error term in \eqref{Gammakrep}.
If $n\geq4m$ the resolvents are not locally $L^2$ and one must have $\kappa$ sufficiently large to ensure the iterated resolvents are locally $L^2$ and achieve the needed pointwise bounds. 
We write the iterated resolvents 
\begin{align}\label{eq:Alambda}
	A(\lambda, z_1,z_2) =  \big[ \big(\mR_0^+(\lambda^{2m})  V\big)^{\kappa-1}\mR_0^+(\lambda^{2m})\big](z_1,z_2).
\end{align}  
For any $n\geq4m$.  From \cite{EGWaveOp,EGWaveOpExt}, for sufficiently large $\kappa$, we have
\begin{align*}
	\sup_{0\leq \ell \leq \lceil\frac{n}2\rceil+1}
	\sup_{0<\lambda <1}| \lambda^{\max\{0,\ell-1\}} \partial_\lambda^\ell 	A(\lambda, z_1,z_2)|&\les \la z_1 \ra^{\{\frac{n}2\}+\f32}  \la z_2\ra^{\{\frac{n}2\}+\f32}.
\end{align*}
This implies that the contribution of the $\mathcal {AE}$ portion of $M^{-1}(\lambda)$ from Proposition~\ref{Minvsing} to $\Gamma_{\kappa}(\lambda)$,
$$
UvA(\lambda)v M_e(\lambda)  vA(\lambda)vU,
$$
satisfies the hypotheses of Proposition~\ref{prop:low tail low d}.

The contribution of the singular part of $M^{-1}(\lambda)$, $\lambda^{-2m}S_1B^{-1}(\lambda)S_1$, to $\Gamma_{\kappa}(\lambda)$ is
$$
\lambda^{-2m} UvA(\lambda)v S_1B^{-1}(\lambda)S_1vA(\lambda)vU.
$$
We recall that for all $n>2m$ (see \cite{EGWaveOpExt} equation (14) and the bounds following it)
\begin{multline} \label{suplR0}
	\sup_{0\leq \ell \leq \lceil\frac{n}2\rceil+1}
	\sup_{0<\lambda <1}| \lambda^{\max\{0,\ell-1\}} \partial_\lambda^\ell 	\mR_0(\lambda^{2m})(x,y)| \\
	\les |x-y|^{2m-n}+ |x-y|^{2m-n+1}+|x-y|^{\{\frac{n}2\}+\frac32}+|x-y|^{-\frac{n-1}2}
	\\
	\les  |x-y|^{2m-n}+  |x-y|^{\{\frac{n}2\}+\frac32}. 
\end{multline}
Obviously, $G_0^0=\mR_0(0)$  satisfies the same bound. 
We also have for $n>4m$ (see \cite[Lemma 3.1]{egl}) 
\be\label{l2mR0}
\mR_0(\lambda^{2m})(x,y)=G_0^0(x,y)+\lambda^{2m}G_0^1(x,y)+\lambda^{2m}E(\lambda)(x,y),
\ee
where, for $0\leq \ell \leq \lceil\frac{n}2\rceil+1$, 
\be\label{l2mR0E}
\lambda^{\ell}|\partial_\lambda^{\ell}E(\lambda)(x,y)|\les \lambda [ |x-y|^{\{\frac{n}2\}+\frac32}+|x-y|^{4m-n+1}].
\ee
This implies 
\begin{multline*}
	\sup_{0\leq \ell \leq \lceil\frac{n}2\rceil+1}
	\sup_{0<\lambda <1}\big| \lambda^{\max\{0,\ell-1\}} \partial_\lambda^\ell 	\big(\lambda^{-2m} (\mR_0(\lambda^{2m})(x,y)-G_0^0(x,y) )\big)\big| \\ \les  |x-y|^{\{\frac{n}2\}+\frac32}+|x-y|^{4m-n+1}\les |x-y|^{2m-n}+  |x-y|^{\{\frac{n}2\}+\frac32}.
\end{multline*}
These imply that (see the proof of Proposition 5.3 in \cite{EGWaveOp})
\be\label{eqn:A diff L2}
\Big\|\sup_{0\leq \ell \leq \lceil\frac{n}2\rceil+1}
\sup_{0<\lambda <1}\big| \lambda^{\max\{0,\ell-1\}} \partial_\lambda^\ell 	 \big[v\frac{A(\lambda)-A(0)}{\lambda^{2m}}v\big](z_1,z) \big|  \Big\|_{L^2_z} \les \la z_1\ra^{-\frac{n}2-},
\ee
provided that $\beta>n_\star+2$.  Also note that $v[A(\lambda)-A(0)]v$ and $v A(\lambda)v$ satisfy the same bound. Therefore,
$$
\lambda^{-2m} UvA(\lambda)v S_1B^{-1}(\lambda)S_1vA(\lambda)vU = \lambda^{-2m} UvA(0)v S_1B^{-1}(\lambda)S_1vA(0)vU+\Gamma(\lambda),
$$
where $\Gamma(\lambda)$ satisfies the hypothesis of Proposition~\ref{prop:low tail low d}.

Finally, using $S_1T_0=0$ we have $S_1=-S_1vG_0^0vU=-UvG_0^0vS_1$, and hence
$$S_1vA(0)vU=S_1 v (G_0^0V)^{\kappa-1}G_0^0vU=S_1(vG_0^0vU)^{\kappa}=(-1)^\kappa S_1.$$
Using a similar identity on the left hand side, we have 
$$
\lambda^{-2m} UvA(\lambda)v S_1B^{-1}(\lambda)S_1vA(\lambda)vU = \lambda^{-2m}  S_1B^{-1}(\lambda)S_1 +\Gamma(\lambda).
$$
This yields the representation \eqref{Gammakrep} when $n>4m$.

Finally, we must take some care in the case of $n=4m$. The bound  \eqref{suplR0} is valid, however, for the error bound \eqref{l2mR0E} to hold we must account for the factor of size $\lambda^{2m}\log(\lambda)$ in the expansion of $\mR_0(\lambda^{2m})$ when $n=4m$ (see the proof of Lemma~\ref{lem:R0 for Mexp_small}).  To account for this,  we need to redefine $E(\lambda)$ as 
$$E(\lambda)=\frac{\mR_0(\lambda^{2m})-G_0^0-\lambda^{2m} G^0_{1,L}-\lambda^{2m}\log(\lambda)G_1^0}{\lambda^{2m}}.
$$
Here we note that when $n=4m$ the integral kernel of $G_1^0$ is a multiple of the constant function, which is annihilated by $S_1v$, i.e. $S_1vG_1^0=G_1^0vS_1=0$, see Lemma~\ref{lem:cancel}.
To utilize this, we write
\begin{multline}\label{Al-A0}
	A(\lambda)-A(0) =(\mR_0(\lambda^{2m})V)^{\kappa-1}\mR_0(\lambda^{2m})-(G_0^0V)^{\kappa-1}G_0^0\\
	=\sum_{j=1}^{\kappa}(G_0^0V)^{j-1} [\mR_0(\lambda^{2m})-G_0^0](V\mR_0(\lambda^{2m}))^{\kappa-j}.
\end{multline}
Therefore,
\begin{multline*}
	S_1v\frac{A(\lambda)-A(0)}{\lambda^{2m}}   =\sum_{j=1}^{\kappa}S_1v(G_0^0V)^{j-1} \frac{\mR_0(\lambda^{2m})-G_0^0}{\lambda^{2m}}  (V\mR_0(\lambda^{2m}))^{\kappa-j} \\
	=\sum_{j=1}^{\kappa}(-1)^{j-1}S_1v \frac{\mR_0(\lambda^{2m})-G_0^0}{\lambda^{2m}}  (V\mR_0(\lambda^{2m}))^{\kappa-j}
	= S_1 \sum_{j=1}^{\kappa}(UvG_0^0v)^{j-1} S_1v E(\lambda) (V\mR_0(\lambda^{2m}))^{\kappa-j}.
\end{multline*}
Therefore, the previous bound on $v[A(\lambda)-A(0)]v$, \eqref{eqn:A diff L2}, remains valid for $S_1v[A(\lambda)-A(0)]v$.  This yields \eqref{Gammakrep} when $n=4m$.

When $2m<n<4m$, first note that, using $v\mR_0v=\mathcal{AE}$, we have  
$vA(\lambda)v=\mathcal{AE}$, and hence $UvA(\lambda)v M_e(\lambda)vA(\lambda)vU=\mathcal {AE}$.

Second, by Lemma~\ref{lem:R0 for Mexp_small}, for some absolutely bounded operators $C_{\ell,\kappa}$, we have
$$
vA(\lambda)v=vA(0)v+\sum_{\ell=1}^{2m-2} \lambda^{\ell}vC_{\ell,\kappa}v + \lambda^{2m-1}\mathcal{AE}.  
$$
The $C_{\ell,\kappa}$ are either exactly zero, or may be explicitly constructed in terms of compositions of operators of the form $vG_j^1v$ and $vG_0^0v$, the exact form is unimportant for our purposes.
Finally, by \eqref{Al-A0} and Lemma~\ref{lem:one_side_cancel}, we have
\begin{multline*}
	S_1v[A(\lambda)-A(0)]v=\sum_{j=1}^{\kappa}(-1)^{j-1}S_1v [\mR_0(\lambda^{2m})-G_0^0] (V\mR_0(\lambda^{2m}))^{\kappa-j}v\\  =  \sum_{j=1}^{\kappa}(-1)^{j-1}S_1 [M_\ell(\lambda)+\lambda^{2m}\mathcal{AE}] (vU\mR_0(\lambda^{2m})v)^{\kappa-j} \\
	=\lambda^{2m}\mathcal{AE}+ \sum_{j=1}^{\kappa}(-1)^{j-1}S_1  M_\ell(\lambda)  (vU\mR_0(\lambda^{2m})v)^{\kappa-j} =[O_{\lceil\frac{n}{2}\rceil+1}(\lambda^{ \alpha_0}) +\lambda^{2m}] \mathcal{AE}
\end{multline*}
Combining all together, recalling that $\alpha_0\geq m+1$, we see that
$$
\lambda^{-2m}UvA(\lambda)v \widetilde M(\lambda)vA(\lambda)vU=\mathcal{AE}+\lambda^{-2m} \Gamma_s(\lambda),$$
where $\Gamma_s(\lambda)$ is a linear combination of operators of type $(k_1,k_2,\alpha)$ as $\widetilde M(\lambda)$ in Proposition~\ref{Minvsing}.

\end{document}